\newtheorem{theorem}{Theorem}[section]
\newtheorem{corollary}[theorem]{Corollary}
\newtheorem{lemma}[theorem]{Lemma}
\newtheorem{proposition}[theorem]{Proposition}
\newtheorem*{definition}{Definition}
\numberwithin{equation}{section}
\def \bC {\mathbb C}
\def \bD {\mathbb D}
\def \bR {\mathbb R}
\def \bR {\mathbb R}
\def \bR {\mathbb R}
\def \cD {\mathcal D}
\def \cF {\mathcal F}
\def \cH {\mathcal H}
\def \cI {\mathcal I}
\def \cL {\mathcal L}
\def \cP {\mathcal P}
\def \fg {\mathfrak g}
\def \fh {\mathfrak h}
\def \fk {\mathfrak k}
\def \fp {\mathfrak p}
\def \fS {\mathfrak S}
\def \fU {\mathfrak U}
\def \la {\lambda}
\def \ph {\varphi}
\def \lan {\langle}
\def \ran {\rangle}
\def \de {\partial}
\def \trans{\,{}^t\!}
\def \half{\frac12}
\def \inv{^{-1}}
\def \deg {\text{\rm deg\,}}
\title[Spherical analysis on homogeneous vector bundles]{Spherical analysis on homogeneous vector bundles}
\author[F. Ricci]{Fulvio Ricci}
\address{Scuola Normale Superiore, Piazza dei Cavalieri
7, 56126 Pisa, Italy } 
\email{{\tt fricci@sns.it}}
\author[A. Samanta]{Amit Samanta}
\address{Scuola Normale Superiore, Piazza dei Cavalieri
7, 56126 Pisa, Italy } 
\email{{\tt amit.gablu@gmail.com}}
\thanks{This research has been supported by the Italian MIUR PRIN Grant {\it Real and Complex Manifolds: Geometry, Topology and Harmonic Analysis}, 2010-2011.}
\subjclass[2010]{43A90, 43A85}
\keywords {Spherical functions, Spherical transforms, Gelfand pairs, Homogeneous bundles}
\begin{document}

\maketitle

\begin{abstract}
Given a Lie group $G$, a compact subgroup $K$ and a representation $\tau\in\widehat K$, we assume that the algebra of $\text{End}(V_\tau)$-valued,  bi-$\tau$-equivariant, integrable functions on $G$  is commutative. We 
present the basic facts of the related spherical analysis, putting particular emphasis on the r\^ole of the algebra of $G$-invariant differential operators on the homogeneous bundle $E_\tau$ over $G/K$. In particular, we observe that, under the above assumptions, $(G,K)$ is a Gelfand pair and
show that the Gelfand spectrum for the triple $(G,K,\tau)$ admits homeomorphic embeddings in $\bC^n$. 

In the second part, we develop in greater detail the spherical analysis for $G=K\ltimes H$ with $H$ nilpotent. In particular, for $H=\bR^n$ and $K\subset SO(n)$ and for the Heisenberg group $H_n$ and $K\subset U(n)$, we characterize the representations $\tau \in \widehat K$ giving a commutative algebra. \end{abstract}

\section*{Introduction}
\vskip.5cm

Let $(G,K)$ be a Gelfand pair with $G$ a Lie group and $K$ a compact subgroup of it. Recent work has put  attention on the fact that the spherical analysis on the bi-$K$-invariant algebra $L^1(K\backslash G/K)$ gains new interesting aspects from the fact that its Gelfand spectrum $\Sigma$, i.e., the space of bounded spherical functions with the compact-open topology, can be naturally embedded into some Euclidean space as a closed set \cite{F}. 

Such an embedding $\rho$ is defined by choosing a generating $k$-tuple  $(D_1,\dots,D_k)$ in the algebra of $G$-invariant differential operators on $G/K$ and assigning to the spherical function $\phi$ on $G/K$ the vector $\rho(\phi)=\big(\la_{D_1}(\phi),\dots,\la_{D_k}(\phi)\big)\in\bC^k$ whose entries $\la_{D_j}(\phi)$ are the eigenvalues of $\phi$ under the $D_j$'s.

This allows to introduce a notion of smoothness for functions defined on $\Sigma$, and to pose the problem, classical in Fourier analysis, of relating smoothness of the spherical transform of a given bi-$K$-invariant function on $G$ with properties of the function itself. This question has been investigated in detail for {\it nilpotent Gelfand pairs}, in which $G=K\ltimes H$ is a motion group on a nilpotent group $H$ \cite{FRY1, FRY2,FRY}.

In this paper we extend the basic framework for such analysis to the spherical transform of type $\tau$, where $\tau$ is an irreducible unitary representation of $K$ for which the appropriate commutativity assumptions are satisfied.

The notion of spherical transform of type $\tau$ goes back to \cite{G}. 
In most of the existing literature the accent is  on the case where $(G,K)$ is a symmetric pair.  For the general case we refer to \cite[Ch. 6]{War} and \cite{Ti, C}.

There are two equivalent ways to introduce the objects of our analysis on a triple $(G,K,\tau)$, where $G$ is a Lie group, $K$ a compact subgroup of it and $\tau\in\widehat K$ as above. 

In the first (or matrix-valued) picture one considers  the homogeneous bundle $E_\tau=G\times_\tau V_\tau$  with basis $G/K$ and linear operators on sections of $E_\tau$ commuting with the action of $G$. The Schwartz kernel theorem implies that, under mild continuity assumptions, any such operator can be represented by convolution with an $\textup{End}(V_\tau)$-valued\footnote{For $V,W$ finite dimensional complex vector spaces, $\textup{Hom}(V,W)$  denotes the space of linear operators from $V$ to $W$. If $V=W$, we write $\textup{End}(V)$ instead of $\textup{Hom}(V,V)$.} distributional kernel $F$ on $G$ satisfying the identity
\begin{equation}\label{F-invariance}
 F(k_1xk_2)=\tau(k_2^{-1})F(x)\tau(k_1^{-1}).
\end{equation}

The commutativity condition imposed on $\tau$ is that the algebra of $\textup{End}(V_\tau)$-valued integrable functions $F$ on $G$ satisfying the above identity is commutative with respect to the convolution defined in \eqref{def of conv} below. 

In the second  (scalar-valued) picture one considers the algebra of integrable scalar-valued functions $f$ on $G$ which are $K$-central and satisfy the identity
$$
f*\overline\chi_\tau=f,
$$
and the requirement on $\tau$ is that this algebra be commutative.

We say that $(G,K,\tau)$ is a {\it commutative triple} if either of these conditions is satisfied.

In the first part of the paper we recall definitions and basic facts about commutative triples and analyze in detail the relevant algebras of invariant differential operators in the two pictures, proving that they are finitely generated (Sections \ref{sect:diifop}).

It is  proved in \cite{D} that  Thomas's characterization \cite{T} of Gelfand pairs admits the appropriate extension to commutative triples  $(G,K,\tau)$ with $(G,K)$ a symmetric pair. This means that $(G,K,\tau)$  is commutative if and only if the algebra $\bD(E_\tau)$ of $G$-invariant differential operators on $E_\tau$ is commutative. In Theorem \ref{gelfand triple iff diff op commutative III} we provide a  proof of this property which applies to general triples, under the assumption that $G/K$ is connected.  Then the spherical functions of a commutative triple coincide with the joint eigenfunctions of these operators which satisfy \eqref{F-invariance} and take unit value at the identity of $G$.

From this equivalence we derive the following conclusion, which does not seem to appear in the literature: {\it if $(G,K,\tau)$ is a commutative triple, then $(G,K)$ is a Gelfand pair}. 
In particular, $G$ must be unimodular and the pair $(G,K)$ must fall in the classification of Lie Gelfand pairs \cite{V,Wo,Y}.

Since our proof is based on the analysis of $\bD(E_\tau)$, this result is limited to Lie groups $G,K$ with $G/K$ connected. It would be interesting to know if the same statement holds for commutative triples where $G$ is not a Lie group.

In Section \ref{sect:embeddings} we describe the embeddings of the Gelfand spectrum of the algebra of integrable functions satisfying \eqref{F-invariance} into Euclidean spaces, extending the result in \cite{F} to general $\tau$.

In Section \ref{sect:strong} we comment on the special case of a {\it strong Gelfand pair} $(G,K)$, defined by the condition that $(G,K,\tau)$ is commutative for {\it every} $\tau\in\widehat K$. Since $(G,K)$ is a strong Gelfand pair if and only if $(K\ltimes G,K)$ is a Gelfand pair
(with $K$ acting on $G$ by inner automorphisms), it is natural to compare the Gelfand spectrum of this pair with those of the individual triples $(G,K,\tau)$. We show that the former is the topological disjoint union of the latter ones.

In the second part of the paper we analyze the case where $G=K\ltimes H$ is a motion group on a Lie group $H$. The sections of $E_\tau$ are then identified with the $\textup{End}(V_\tau)$-valued functions on $H$ and the algebra of integrable functions satisfying \eqref{F-invariance} with the algebra of $\textup{End}(V_\tau)$-valued integrable functions on $H$ which transform according to $\tau$ under $K$:
$$
F(k\cdot x)=\tau(k)F(x)\tau(k\inv).
$$

In this context, the representation theoretical criterion for having a commutative triple can be reduced to conditions on the representations of $H$ rather than of $G$. At the same time, the spherical functions can be defined directly on $H$ and their properties described without lifting them to $G$, cf. Sections \ref{sect:bundles} and \ref{sect:diffonH}.
 
In Section \ref{sect:spectra} we show that the Gelfand spectrum $\Sigma_1$ of the pair $(G,K)$ is identified in a natural way with a quotient of the spectrum $\Sigma_\tau$ of the triple $(G,K,\tau)$, and that the quotient map becomes a canonical projection onto a coordinate subspace of $\bC^k$ when the two spectra are embedded in a compatible way in complex Euclidean spaces. 

From Section \ref{sect:positive} on we further restrict ourselves to the case where $H$ is nilpotent. We first extend to this kind of commutative triples the proof in \cite{B} that all bounded spherical functions are of positive type. 

In Section \ref{sect:H=Rn,Hn} we characterize the commutative triples with $H=\bR^n$ as those for which $\tau$ decomposes without multiplicities when restricted to the stabilizer of any point in $\bR^n$, and the commutative triples with $H$ equal to the Heisenberg group $H_n$ as those for which the tensor product of $\tau$ and the metaplectic representation restricted to $K$ decomposes without multiplicities. 

This allows to easily recognize the known fact  that 
$\big(SO(n)\ltimes\bR^n,SO(n)\big)$, $\big(U(n)\ltimes H_n,U(n)\big)$ are strong Gelfand pairs, and in addition  to classify the representations $\tau$ for which the triples $\big(SU(n)\ltimes\bC^n,SU(n),\tau\big)$,  $(SU(n)\ltimes H_n,SU(n),\tau)$ are commutative.

In Section \ref{sect:sphericalRnHn}  we give general formulas for the bounded spherical functions and finally, in Section \ref{sect:example}  , we explicitely compute them in the special  case $H=\bR^n$, $K=SO(n)$ and $\tau$ the natural representation on $\bC^n$.

\vskip.5cm

\section{Commutative triples and spherical functions of type $\tau$}\label{sect:triples}
\vskip.5cm

Let $G$ be a locally compact group, $K$ be a compact subgroup of $G$ and  $\tau$ a finite dimensional unitary  representation of $K$ on the  space $V_\tau$. 

By $C^\infty(G,V_\tau)$ we  denote the space of $V_\tau$-valued smooth functions on $G$ and  by $C^\infty_\tau(G,V_\tau)$ we denote  the subspace of functions $u$ satisfying the identity
\begin{equation}\label{section}
u(xk)=\tau(k^{-1})u(x),\quad\ \forall k\in K.
\end{equation}

Then $C^\infty_\tau(G,V_\tau)$ is naturally identified with the space of smooth sections of the homogeneous bundle $E_\tau=G\times_\tau V_\tau$. Similar notation will be used with $C^\infty$ replaced by other function (or distribution) spaces, like $\cD(=C^\infty_c),\cD',C^k,L^p$~etc.
 
It follows from the Schwartz kernel theorem that every linear operator, continuous with the respect to the standard topologies, mapping $\cD$-sections of $E_\tau$ into $\cD'$-sections of $E_\tau$ and commuting with the action of~$G$ on $E_\tau$, can be represented in a unique way as\footnote{The integral notation in \eqref{Schwartz-kernel-thm} and the pointwise identity \eqref{def of Hom(V)-valued algebra} must be appropriately interpreted when $F$ is not a function.}
\begin{eqnarray}\label{Schwartz-kernel-thm}
Tu(x)=\int_GF(y\inv x)u(y)dy,\quad u\in\cD_\tau(G,V_\tau),
\end{eqnarray}
with $F\in\cD'_{\tau,\tau}\big(G,\textup{End}(V_\tau)\big)$, i.e., 
\begin{eqnarray}\label{def of Hom(V)-valued algebra}
F(k_1xk_2)=\tau(k_2^{-1})F(x)\tau(k_1^{-1}).
\end{eqnarray} 

Conversely, any $F\in \cD'_{\tau,\tau}\big(G,\textup{End}(V_\tau)\big)$ defines a continuous $G$-invariant operator $T$ on $\cD$-sections of $E_\tau$ via formula \eqref{Schwartz-kernel-thm}.

In particular, operators $T=T_F$ as in \eqref{Schwartz-kernel-thm} with $F\in L^1_{\tau,\tau}\big(G,\textup{End}(V_\tau)\big)$ can be composed with each other (e.g., because they are bounded  on $L^1$-sections) and $T_{F_1}T_{F_2}=T_{F_1*F_2}$, where
\begin{eqnarray}\label{def of conv}
F_1*F_2(x)=\int_GF_2(y^{-1}x)F_1(y)dy. 
\end{eqnarray} 

\begin{definition}
$(G,K,\tau)$ is a {\it commutative triple} if the algebra $L_{\tau,\tau}^1(G,\textup{End}(V_\tau))$ is commutative. 
\end{definition}

The following well-known theorem (\cite{War}, Vol. II, Page-9, Prop. 6.1.1.6) gives a characterization of commutative triples in terms of representation of~$G$. By $\widehat{G}$ we denote the set of all equivalence classes of irreducible unitary representations of~$G$.

\begin{theorem}\label{representation theoretic criteria-general case}
$(G,K,\tau)$ is a commutative triple if and only if, for any $\pi\in\widehat{G}$, the multiplicity of $\tau$ in $\pi_{|_K}$ is at most~$1$.
\end{theorem}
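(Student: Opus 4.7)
The strategy is to realize $L^1_{\tau,\tau}\big(G,\textup{End}(V_\tau)\big)$ as acting, through a Fourier-type transform, on the multiplicity spaces of $\tau$ inside irreducible unitary representations of $G$, and then to read the commutativity of the algebra off those actions.

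Fix $\pi\in\widehat G$ on $H_\pi$ and set $M_\pi(\tau):=\textup{Hom}_K(V_\tau,H_\pi)$, so that $\dim M_\pi(\tau)$ is exactly the multiplicity of $\tau$ in $\pi_{|_K}$. For $F\in L^1_{\tau,\tau}$ I would define
$$
\widehat F(\pi)\phi \;:=\; \int_G \pi(x)\circ\phi\circ F(x)\,dx,\qquad \phi\in M_\pi(\tau).
$$
Using the special case $F(k^{-1}x)=F(x)\tau(k)$ of \eqref{def of Hom(V)-valued algebra} together with the left-invariance of Haar measure, one checks that $\widehat F(\pi)\phi$ is again $K$-intertwining, so $\widehat F(\pi)\in\textup{End}(M_\pi(\tau))$. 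A second Fubini-and-substitution computation on the definition \eqref{def of conv} of $F_1*F_2$ (substituting $x=yz$ in the double integral) yields
$$
\widehat{F_1*F_2}(\pi)\;=\;\widehat F_1(\pi)\,\widehat F_2(\pi),
$$
so $F\mapsto\widehat F(\pi)$ is a unital algebra homomorphism into $\textup{End}(M_\pi(\tau))$.

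Next I would verify two structural properties of the family $\{F\mapsto\widehat F(\pi)\}_{\pi\in\widehat G}$. First, it is \emph{separating}: if $\widehat F(\pi)=0$ for every $\pi$, then $F=0$; this follows by pairing $\widehat F(\pi)$ against $K$-finite matrix coefficients of $\pi$ and recovering the scalar trace function $f(x)=\tr F(x)$ (which is bi-$K$-invariant and satisfies $f*\bar\chi_\tau=f$), and then invoking Gelfand--Raikov and injectivity of the scalar Fourier transform on $L^1(G)$. Second, the image in each $\textup{End}(M_\pi(\tau))$ is \emph{large}: by bi-$\tau$-projecting matrix coefficients of $\pi$ (against a compactly supported bump) one produces elements of $L^1_{\tau,\tau}$ whose $\widehat{\,\cdot\,}(\pi)$ realizes arbitrary rank-one operators on $M_\pi(\tau)$, hence the image generates $\textup{End}(M_\pi(\tau))$ as a $*$-algebra. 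Combining these facts gives the theorem: $L^1_{\tau,\tau}$ is commutative iff each $\widehat{\,\cdot\,}(\pi)$ has commutative image iff $\textup{End}(M_\pi(\tau))$ itself is commutative iff $\dim M_\pi(\tau)\le1$.

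I expect the main obstacle to be the second structural property, namely showing that the image of $L^1_{\tau,\tau}$ in $\textup{End}(M_\pi(\tau))$ is rich enough (not merely non-trivial) whenever $\dim M_\pi(\tau)\ge2$. For compact $G$ this is an immediate Peter--Weyl computation, but for a general locally compact $G$ one must either invoke $C^*$-algebra machinery (an abelian $*$-subalgebra of $\textup{End}(M_\pi(\tau))$ acting irreducibly on $M_\pi(\tau)$ forces $\dim M_\pi(\tau)\le1$) or carry out an honest approximation producing integrable $\tau$-bi-equivariant truncations of matrix coefficients of $\pi$.
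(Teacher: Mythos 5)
The paper does not actually prove this theorem: it quotes it from Warner (Vol.~II, Prop.~6.1.1.6), so there is no internal argument to compare against. Your proposal reconstructs essentially the standard proof from that source: a transform $F\mapsto\widehat F(\pi)\in\textup{End}\big(\textup{Hom}_K(V_\tau,\cH_\pi)\big)$, multiplicativity, a separation statement for one implication and a density statement for the other. The outline is correct, and your computation that the paper's convolution convention \eqref{def of conv} makes $F\mapsto\widehat F(\pi)$ a homomorphism rather than an anti-homomorphism does check out.

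Two of your steps need repair, though both are repairable. For the separating property: the function $f=\textup{Tr}\,F$ is $K$-central and of type $\tau$, not bi-$K$-invariant (a nonzero bi-$K$-invariant function of type $\tau$ exists only for trivial $\tau$), and concluding $F=0$ from $\textup{Tr}\,F=0$ requires the injectivity of $S_\tau$ on $L^1_{\tau,\tau}$, i.e.\ the inversion formula \eqref{Sinv}; alternatively, testing $\widehat F(\pi)$ on rank-one $\phi$ shows directly that $\pi\big(\langle F(\cdot)v,\xi\rangle\big)=0$ for all $v,\xi\in V_\tau$ and all $\pi\in\widehat G$, and injectivity of the scalar $L^1$ Fourier transform (Gelfand--Raikov plus direct-integral decomposition) then kills every matrix entry of $F$. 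For the density property, the ``honest approximation by truncated matrix coefficients'' is the wrong tool: matrix coefficients of $\pi$ need not be integrable, and a compactly supported truncation has no computable transform. What closes the argument is the von Neumann-density route you mention only in parentheses: Schur's lemma gives $\pi(C_c(G))''=\cB(\cH_\pi)$; compressing by the idempotent $e_\tau=d_\tau\bar\chi_\tau m_K$ (so that $P_\tau\pi(f)P_\tau=\pi(e_\tau*f*e_\tau)$, whence the compression is again a $*$-algebra) and averaging over conjugation by $K$ shows that the image of $L^1_{\tau,\tau}$ is weakly dense in $I\otimes\cB\big(M_\pi(\tau)\big)$; a commutative $*$-algebra that is weakly dense forces $\cB\big(M_\pi(\tau)\big)$ to be commutative, hence $\dim M_\pi(\tau)\le1$. (Also drop ``unital'': $L^1_{\tau,\tau}$ has no unit in general.) With these two points made precise your proof is complete and coincides with the one the paper delegates to the literature.
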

\medskip

To an $\textup{End}(V_\tau)$-valued function $F$ we associate the scalar-valued function
\begin{equation}\label{S}
S_\tau F= d_\tau\textup{Tr}\,F.
\end{equation} 

The following statement is also well known \cite[vol. II]{War}.

\begin{proposition}
An $\textup{End}(V_\tau)$-valued function function $F$ satisfies \eqref{def of Hom(V)-valued algebra} if and only if $f=S_\tau F$ is $K$-central, i.e., 
\begin{eqnarray}\label{def of scalar valued algebra 1}
f(kxk^{-1})=f(x),\qquad\forall\,k\in K,
\end{eqnarray}
 and of type $\tau$, i.e.,
  \begin{eqnarray}\label{def of scalar valued algebra 2}
 f*(d_\tau\bar\chi_\tau m_K):= d_\tau\int_K f(xk)\chi_\tau(k)dk=f(x),  
 \end{eqnarray} 
 where $m_K$ denotes the normalized Haar measure on $K$. 
 
 Under these assumptions on $F$ and $f$, $S_\tau$ is bijective and the inverse map is given by
 \begin{equation}\label{Sinv}
S_\tau^{-1}f(x)=\int_K\tau(k)f(kx)dk. 
\end{equation}
\end{proposition}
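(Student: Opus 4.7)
The plan is to verify the two implications of the ``if and only if'' separately, and then check that the two assignments $F\mapsto S_\tau F$ and $f\mapsto S_\tau^{-1}f$ given in the statement are mutual inverses. Throughout, the key ingredients are: the cyclicity of the trace, the bi-invariance of Haar measure on the compact group $K$, and Schur orthogonality for the coefficients of $\tau$.

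For the forward direction, I would start from \eqref{def of Hom(V)-valued algebra} and compute $f(k_1xk_2)=d_\tau\tr(\tau(k_2^{-1})F(x)\tau(k_1^{-1}))=d_\tau\tr(\tau((k_2k_1)^{-1})F(x))$. Specializing $k_1=k,\,k_2=k^{-1}$ (and renaming $x\mapsto x$) gives $f(kxk^{-1})=f(x)$, which is \eqref{def of scalar valued algebra 1}. To get \eqref{def of scalar valued algebra 2}, specialize $k_1=e$, $k_2=k$ to obtain $f(xk)=d_\tau\tr(\tau(k^{-1})F(x))$; then
$$d_\tau\int_Kf(xk)\chi_\tau(k)\,dk = d_\tau^2\int_K\tr\bigl(\tau(k^{-1})F(x)\bigr)\tr\tau(k)\,dk = d_\tau\tr F(x)=f(x),$$
where the middle equality is Schur orthogonality: writing the two traces as sums and integrating matrix coefficients gives the scalar $\frac{1}{d_\tau}\tr F(x)$. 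For the converse, assume $f$ is $K$-central and of type~$\tau$, and define $F$ by the integral on the right of \eqref{Sinv}. To prove \eqref{def of Hom(V)-valued algebra}, write
$$F(k_1xk_2)=\int_K\tau(k)f(kk_1xk_2)\,dk;$$
the substitution $k\mapsto kk_1^{-1}$ (left-invariance) produces the factor $\tau(k_1^{-1})$ on the right, since $f$ is scalar and commutes past matrices. The $k_2$-part is the subtle one: use $K$-centrality in the form $f(ax)=f(xa)$ for $a\in K$ (which is equivalent to \eqref{def of scalar valued algebra 1}) to rewrite $f(kxk_2)=f(k_2kx)$, and then the substitution $k\mapsto k_2^{-1}k$ extracts $\tau(k_2^{-1})$ on the left. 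This gives $F(k_1xk_2)=\tau(k_2^{-1})F(x)\tau(k_1^{-1})$.

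It remains to verify that $S_\tau$ and $S_\tau^{-1}$ are mutually inverse. One direction is immediate from the type-$\tau$ property plus $K$-centrality: $d_\tau\tr F(x)=d_\tau\int_K\chi_\tau(k)f(kx)\,dk=d_\tau\int_K\chi_\tau(k)f(xk)\,dk=f(x)$. The other direction is the single computational step where Schur orthogonality enters in non-trivial matrix form: for $A\in\textup{End}(V_\tau)$,
$$\int_K\tau(k)\,\tr\bigl(\tau(k^{-1})A\bigr)\,dk=\frac{1}{d_\tau}A,$$
which is verified entry-by-entry using $\int_K\tau_{ij}(k)\tau_{pq}(k^{-1})\,dk=\frac{1}{d_\tau}\delta_{iq}\delta_{jp}$. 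Applying this with $A=F(x)$ and using the already-established identity $F(kx)=F(x)\tau(k^{-1})$ yields $S_\tau^{-1}(S_\tau F)=F$.

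The main obstacle, such as it is, is the equivariance computation for $F=S_\tau^{-1}f$: one might naively expect the two factors $\tau(k_1^{-1})$ and $\tau(k_2^{-1})$ to come out by the same mechanism, but in fact the first is produced by a trivial change of variable whereas the second genuinely requires $K$-centrality to swap the roles of left and right $K$-translation inside $f$. Once that swap is made, the remaining ingredients are routine Haar-integration manipulations.
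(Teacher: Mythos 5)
Your proof is correct and complete; note that the paper itself gives no argument for this proposition, merely citing it as well known from Warner, so there is nothing to compare against --- your write-up (Schur orthogonality for the matrix coefficients of $\tau$, bi-invariance of Haar measure on $K$, and the observation that $K$-centrality is what converts right $K$-translation into left $K$-translation inside $S_\tau^{-1}f$) is exactly the standard argument that fills this gap. The only point worth flagging is that every use of the orthogonality relation $\int_K\tau_{ij}(k)\tau_{pq}(k^{-1})\,dk=\frac{1}{d_\tau}\delta_{iq}\delta_{jp}$ requires $\tau$ to be irreducible, which is the paper's standing hypothesis ($\tau\in\widehat K$) even though the local wording in Section 1 says only ``finite dimensional unitary''; you use it correctly but should state the assumption explicitly.
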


By $L^p_\tau(G)^{\textup{int}K}$ we denote the space of $K$-central $L^p$ functions of type $\tau$. 
For $p=1$, $L^1_\tau(G)^{\textup{int}K}$ is closed under convolution. It is easy to verify that
$$
S_\tau\inv(f_1*f_2)=f_1*(S_\tau\inv f_2)=(S_\tau\inv f_1)*(S_\tau\inv f_2)\ ,
$$
which leads to the following conclusion.

\begin{proposition}
The map $S_\tau$ establishes an algebra isomorphism between $L^1_{\tau,\tau}\big(G,\textup{End}(V_\tau)\big)$ and $L^1_\tau(G)^{\textup{int}K}$.
\end{proposition}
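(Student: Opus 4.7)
The plan is to deduce the proposition directly from the identity displayed just above it,
\[
S_\tau^{-1}(f_1 * f_2) = (S_\tau^{-1} f_1) * (S_\tau^{-1} f_2),
\]
once the bijection $S_\tau$ (with inverse \eqref{Sinv}) from the preceding proposition is in hand. That bijection is already linear, so the only content to verify is that it intertwines the two convolutions, which is exactly the identity above. I would therefore focus the proof on establishing it.

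My preferred formulation is the equivalent statement $S_\tau(F_1 * F_2) = f_1 * f_2$, with $f_i = S_\tau F_i$. Starting from
\[
S_\tau(F_1 * F_2)(x) = d_\tau \int_G \text{Tr}\bigl(F_2(y^{-1}x)\, F_1(y)\bigr)\, dy
\]
and substituting $F_1(y) = \int_K \tau(k) f_1(ky)\, dk$ from \eqref{Sinv}, the integrand becomes $\text{Tr}\bigl(\tau(k) F_2(y^{-1}x)\bigr)\, f_1(ky)$. The key identity is
\[
\text{Tr}\bigl(\tau(k)\, F_2(g)\bigr) = \tfrac{1}{d_\tau}\, f_2(k^{-1} g),
\]
which is immediate from the right-hand case of \eqref{def of Hom(V)-valued algebra}, namely $F_2(k^{-1} g) = F_2(g)\tau(k)$, combined with $f_2 = d_\tau \text{Tr} F_2$. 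Applying it with $g = y^{-1}x$ leaves only scalar functions under the integral.

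From there the argument is a pair of substitutions in the $y$-integral. First, send $y \mapsto k^{-1} y$, which is valid for any left Haar measure, to obtain the integrand $f_2(k^{-1} y^{-1} k x)\, f_1(y)$. Second, send $y \mapsto k y k^{-1}$; this conjugation preserves left Haar because the modular function $\Delta$, being a continuous homomorphism from $G$ into $\bR^*_+$, must be trivial on the compact subgroup $K$. Since $k^{-1}(k y k^{-1})^{-1} k = y^{-1}$ and $K$-centrality gives $f_1(k y k^{-1}) = f_1(y)$, the integrand collapses to $f_2(y^{-1}x)\, f_1(y)$, the $dk$-integral becomes trivial, and what remains is $(f_1 * f_2)(x)$. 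The main obstacle is keeping straight the matrix order in the convolution and applying bi-equivariance of $F_2$ on the correct side so that the trace identity can be invoked; once that is set up, the rest is a mechanical two-substitution computation.
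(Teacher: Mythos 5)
Your proposal is correct and follows the same route as the paper, which simply asserts the multiplicativity identity $S_\tau^{-1}(f_1*f_2)=(S_\tau^{-1}f_1)*(S_\tau^{-1}f_2)$ as ``easy to verify''; your computation (the trace identity from bi-equivariance of $F_2$, then the two changes of variable, using that the modular function is trivial on the compact subgroup $K$) is a valid verification of the equivalent statement $S_\tau(F_1*F_2)=f_1*f_2$.
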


\begin{definition}
Let $(G,K,\tau)$ be a commutative triple. A non-trivial function $\Phi\in L_{\tau,\tau}^\infty(G,\textup{End}(V_\tau))$ is said to be a $\tau$-\textit{spherical function}
if the map 
$$
F\rightarrow \widehat{F}(\Phi):=\frac{1}{d_\tau}\int_G\textup{Tr}\big[F(x)\Phi(x^{-1})\big]dx=\frac{1}{d_\tau}\textup{Tr}\big[F*\Phi(e)\big]
$$ 
is a homomorphism of $L_{\tau,\tau}^1(G,\textup{End}(V_\tau))$ into $\mathbb{C}$.
\end{definition} 

\begin{definition}
Let $(G,K,\tau)$ be a commutative triple. A non trivial function $\phi\in L^{\infty}_\tau(G)^{\textup{int}K}$ is said to be a \textit{trace} $\tau$-\textit{spherical function} if the map 
$$
f\rightarrow \widehat{f}(\phi):=\int_Gf(x)\phi(x^{-1})dx=f*\phi(e)
$$ is a homomorphism of $L^1_\tau(G)^{\textup{int}K}$ into $\mathbb{C}$.
\end{definition}

Observe that our definition of trace spherical function differs from that in \cite{War} by a factor of $d_\tau$.

The following theorem characterizes the $\tau$-spherical functions in terms of functional equations. We refer to \cite{GV} and \cite[Thm. 3.6]{C} for the proof.

\begin{theorem}\label{characterization of spherical function in terms of functional equation-1}
For $\Phi\in L_{\tau,\tau}^\infty(G,\textup{End}(V_\tau))$ and $\phi=\frac{1}{d_\tau^2}S_\tau(\Phi)$ the following are equivalent:
\begin{enumerate}
\item[\rm(i)] $\Phi$ is a $\tau$-spherical function.
\item[\rm(ii)]  $\phi$ is a trace $\tau$-spherical function.
\item[\rm(iii)]   $\Phi\in L_{\tau,\tau}^\infty(G,\textup{End}(V_\tau))$ is nontrivial and satisfies the functional equation
\begin{equation}\label{functional equation for Phi}
d_\tau\int_K\Phi(xky)\chi_\tau(k)dk=\Phi(y)\Phi(x). 
\end{equation}
\item[\rm(iv)]  $\phi\in L^\infty_\tau(G)^{\textup{int}K}$ is nontrivial and satisfies the functional equation 
\begin{eqnarray}\label{functional equation for phi}
\int_K\phi(xkyk^{-1})dk=\phi(x)\phi(y).
\end{eqnarray}
\end{enumerate}

A $\tau$-spherical function $\Phi$ satisfies $\Phi(e)=I$.
\end{theorem}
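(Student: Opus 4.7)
The strategy is to bridge the four conditions through the reproducing formula
\begin{equation*}
F*\Phi \;=\; \widehat F(\Phi)\,\Phi \qquad \bigl(F \in L^1_{\tau,\tau}(G,\textup{End}(V_\tau))\bigr)
\end{equation*}
and its scalar counterpart $f*\phi = \widehat f(\phi)\,\phi$. A short calculation using the algebra isomorphism $S_\tau$ of the preceding proposition gives $\widehat F(\Phi) = \widehat f(\phi)$ under the correspondence $f = S_\tau F$ and $\phi = d_\tau^{-2} S_\tau \Phi$, so (i)$\Leftrightarrow$(ii) follows at once; it then suffices to establish (iii)$\Leftrightarrow$(reproducing formula)$\Leftrightarrow$(i) together with the symmetric scalar chain relating (iv) to (ii).

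The core step is (iii)$\Rightarrow$(reproducing formula). I begin from $F*\Phi(x) = \int_G \Phi(y^{-1}x)F(y)\,dy$ and exploit bi-$\tau$-equivariance of $F$ via the tautology $F(y) = d_\tau \int_K F(ky)\chi_\tau(k)\,dk$, which reduces to the Schur-orthogonality identity $d_\tau \int_K \tau(k^{-1})\chi_\tau(k)\,dk = I$. Substituting, interchanging integrals, and applying the change of variable $y \mapsto k^{-1}y$ in the inner integral converts the expression into
\begin{equation*}
F*\Phi(x) \;=\; \int_G \biggl(d_\tau \int_K \Phi(y^{-1}kx)\chi_\tau(k)\,dk\biggr) F(y)\,dy.
\end{equation*}
The bracketed $K$-integral is precisely (iii) with $y^{-1}$ and $x$ in place of the two arguments, so it equals $\Phi(x)\Phi(y^{-1})$, yielding $F*\Phi(x) = \Phi(x)\cdot F*\Phi(e)$. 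Since $F*\Phi$ inherits \eqref{def of Hom(V)-valued algebra}, $F*\Phi(e)$ commutes with $\tau(K)$ and is therefore a scalar $cI$ by Schur's lemma (irreducibility of $\tau$); taking traces identifies $c = \widehat F(\Phi)$.

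The implication (reproducing formula)$\Rightarrow$(i) is immediate from associativity of convolution, and (i)$\Rightarrow$(reproducing formula) follows from a duality argument: (i) forces $\int_G \textup{Tr}\bigl(G(y)(F*\Phi - \widehat F(\Phi)\Phi)(y^{-1})\bigr)\,dy = 0$ for every $G \in L^1_{\tau,\tau}$, and non-degeneracy of the $L^1$--$L^\infty$ trace pairing on bi-$\tau$-equivariant functions forces the bracketed expression to vanish. Specializing to bi-$\tau$-equivariant approximate units concentrated at a prescribed group element then recovers (iii) pointwise. The scalar chain (iv)$\Leftrightarrow$(ii) is proved analogously, replacing the bi-$\tau$-equivariance trick by $K$-centrality of $f$ and the substitution $y \mapsto kyk^{-1}$; centrality of $\phi$ makes the ensuing $K$-integral $\int_K \phi(xky^{-1}k^{-1})\,dk$ fall directly under (iv).

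For the normalization $\Phi(e) = I$, bi-$\tau$-equivariance gives $\Phi(k) = \Phi(e)\tau(k^{-1}) = \tau(k^{-1})\Phi(e)$, so $\Phi(e) = cI$ by Schur. Setting $x = y = e$ in (iii) and invoking $d_\tau \int_K \tau(k^{-1})\chi_\tau(k)\,dk = I$ forces $c^2 = c$, and $c = 0$ is excluded because (iii) with $y = e$ would then force $\Phi \equiv 0$, contradicting non-triviality. The main obstacle is the core calculation above: the functional equation (iii) must be activated \emph{inside} a double integral over $G$, which is only possible after bi-$\tau$-equivariance of $F$ is used to manufacture the extra $K$-integration of precisely the correct form. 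Once that manipulation is in place, Schur's lemma, trace-taking, and the isomorphism $S_\tau$ deliver the remaining equivalences.
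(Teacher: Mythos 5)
The paper itself does not prove this theorem; it defers to Gangolli--Varadarajan and to Camporesi's Theorem~3.6. Your argument is essentially the classical proof from those sources, organized around the reproducing formula $F*\Phi=\widehat F(\Phi)\,\Phi$, and the computations you do spell out check out: $F(ky)=F(y)\tau(k^{-1})$ together with Schur orthogonality gives $F(y)=d_\tau\int_K F(ky)\chi_\tau(k)\,dk$; the substitution $y\mapsto k^{-1}y$ then converts \eqref{functional equation for Phi} into $F*\Phi(x)=\Phi(x)\,F*\Phi(e)$; Schur's lemma applied to the bi-equivariant function $F*\Phi$ gives $F*\Phi(e)=\widehat F(\Phi)I$; and the normalization $\phi=d_\tau^{-2}S_\tau\Phi$ is exactly what makes $\widehat F(\Phi)=\widehat{S_\tau F}(\phi)$, so (i)$\Leftrightarrow$(ii) does follow from the algebra isomorphism $S_\tau$.

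Two steps are thinner than they should be. First, $\Phi$ is a priori only an $L^\infty$ class, so the pointwise statements in (iii) and the assertion $\Phi(e)=I$ require passing to a continuous representative; this follows from $\Phi=\widehat F(\Phi)^{-1}F*\Phi$ for some compactly supported continuous $F\in L^1_{\tau,\tau}$ with $\widehat F(\Phi)\neq0$, but it has to be said. Second, and more substantively, the return trip from the reproducing formula to \eqref{functional equation for Phi} is not a direct instance of the non-degeneracy of the trace pairing: the function $y\mapsto d_\tau\int_K\Phi(xky)\chi_\tau(k)\,dk-\Phi(y)\Phi(x)$ is only right-$\tau$-equivariant in $y$, not bi-equivariant, so it does not live in the space where your pairing is non-degenerate. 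The approximate-unit device does work, but one must feed in the bi-equivariant projections of $A\,\eta_j(y_0^{-1}\,\cdot\,)$ for an \emph{arbitrary} $A\in\textup{End}(V_\tau)$; the limit then produces the stronger identity
\begin{equation*}
\int_K\Phi(xky)\,A\,\tau(k)\,dk=\frac{1}{d_\tau}\textup{Tr}\big[\Phi(x)A\big]\,\Phi(y)\qquad\text{for all }A\in\textup{End}(V_\tau),
\end{equation*}
from which \eqref{functional equation for Phi} follows by taking $A$ to be matrix units and summing the appropriate components. Your phrase ``specializing to approximate units \dots recovers (iii) pointwise'' hides precisely this computation, which is where the real work of that implication lies.
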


\vskip.5cm

\section{Differential operators on  homogeneous bundles}\label{sect:diifop}
\vskip.5cm

Let  $\mathbb{D}(G)$ be the algebra of  left-invariant differential operator on $G$. The action of $\mathbb{D}(G)$ on $C^\infty(G)$ induces an action of $\mathbb{D}(G)\otimes\textup{End}(V_\tau)$  on $C^\infty(G,V_\tau)= C^\infty(G)\otimes V_\tau$  given by
\begin{equation}\label{matrixD}
(D\otimes T)u:=D(Tu).
\end{equation}

With an abuse of notation, we will write $D$ for the ``scalar'' operator  $D\otimes I$.

The elements of $\mathbb{D}(G)\otimes\textup{End}(V_\tau)$ which preserve $C^\infty_\tau(G,V_\tau)$ are the ones which are invariant under all operators $\mu(k)$, $k\in K$ given by
\begin{equation}\label{D^K}
\mu(k)(D\otimes T)=D^{\textup{Ad}k}\otimes\tau(k^{-1})T\tau(k),
\end{equation} 
where, for an automorphism $\ph$ of $G$ and $D\in\bD(G)$,
$$
D^{\ph} f=\big(D(f\circ\ph)\big)\circ\ph\inv,
$$
cf. \cite[p. 120]{W}.

Denote by $\big(\mathbb{D}(G)\otimes \textup{End}(V_\tau)\big)^K$ the algebra of operators which are invariant under $\mu$ in \eqref{D^K}, and  by $\lambda:\fS(\fg)\longrightarrow \fU(\fg)\cong\bD(G)$  the symmetrization operator. The following statement is pretty obvious.

\begin{lemma}\label{Symm}
 The operator
$$
\Lambda:=\lambda\otimes I:\fS(\fg)\otimes\textup{End}(V_\tau) \longrightarrow \bD(G)\otimes\textup{End}(V_\tau)
$$
is a linear bijection from the space $\big(\fS(\fg)\otimes \textup{End}(V_\tau)\big)^K$ of $\textup{End}(V_\tau)$-valued polynomials $P$ on $\fg$ satisfying the identity
$$
P\circ\textup{Ad}(k)=\tau(k)\inv P\tau(k),
$$
onto
$\big(\mathbb{D}(G)\otimes \textup{End}(V_\tau)\big)^K$.
\end{lemma}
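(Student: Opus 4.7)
The plan is to deduce this from the standard $K$-equivariance of the symmetrization map. The core fact I would invoke is that $\lambda : \fS(\fg) \to \fU(\fg) \cong \bD(G)$ is not only a linear bijection (Poincar\'e--Birkhoff--Witt) but also intertwines the natural $K$-actions: the extension of $\textup{Ad}$ to an algebra automorphism of $\fS(\fg)$ on one side, and the map $D \mapsto D^{\textup{Ad}(k)}$ on $\bD(G)$ on the other. This is immediate from the explicit formula $\lambda(X_1\cdots X_n) = \frac1{n!}\sum_\sigma X_{\sigma(1)}\cdots X_{\sigma(n)}$: both $K$-actions are algebra homomorphisms agreeing on $\fg$, hence agree on symmetrized products.

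Tensoring this intertwining property with the identity of $\textup{End}(V_\tau)$, the map $\Lambda = \lambda\otimes I$ is a linear bijection of $\fS(\fg)\otimes \textup{End}(V_\tau)$ onto $\bD(G)\otimes \textup{End}(V_\tau)$, equivariant with respect to the diagonal $K$-actions in which $K$ acts on $\textup{End}(V_\tau)$ by $T\mapsto\tau(k)T\tau(k)\inv$. In particular, $\Lambda$ restricts to a linear bijection of the $K$-invariant subspaces.

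It then remains to match these two invariant subspaces with the descriptions in the statement. On the $\bD(G)\otimes\textup{End}(V_\tau)$ side, an element $\sum_i D_i\otimes T_i$ is fixed by the diagonal action exactly when it is fixed by every $\mu(k)$ of \eqref{D^K}, so the invariants are $\bigl(\bD(G)\otimes\textup{End}(V_\tau)\bigr)^K$ as defined. On the $\fS(\fg)\otimes\textup{End}(V_\tau)$ side, I would fix a $K$-invariant inner product on $\fg$ (available because $K$ is compact) to identify $\fS(\fg)$ with polynomials on $\fg$; a general element then corresponds to an $\textup{End}(V_\tau)$-valued polynomial $P$, and the invariance condition $\tau(k)P(\textup{Ad}(k)\inv X)\tau(k)\inv = P(X)$ rearranges, after replacing $k$ by $k\inv$, to $P\circ\textup{Ad}(k) = \tau(k)\inv P\,\tau(k)$, as claimed.

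Since the author flags the lemma as ``pretty obvious,'' the argument has no genuine obstacle; the only care needed is in tracking the three separate $K$-actions (on $\fS(\fg)$, on $\bD(G)$ via $D^{\textup{Ad}(k)}$, and on $\textup{End}(V_\tau)$ by conjugation) and verifying that the sign/inverse conventions in \eqref{D^K} and in the stated identity on $P$ correspond to the same diagonal $K$-action on both sides of $\Lambda$.
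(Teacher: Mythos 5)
The paper offers no proof of this lemma (it is dismissed as ``pretty obvious''), and your argument is exactly the intended one: $\lambda$ is a linear bijection by Poincar\'e--Birkhoff--Witt, and it intertwines the algebra automorphism of $\fS(\fg)$ extending $\textup{Ad}(k)$ with $D\mapsto D^{\textup{Ad}(k)}$ on $\bD(G)$, so $\Lambda=\lambda\otimes I$ is $K$-equivariant and restricts to a bijection between the two invariant subspaces. The only blemish is in your final paragraph: with your stated convention $T\mapsto\tau(k)T\tau(k)\inv$ on $\textup{End}(V_\tau)$, the fixed-point condition $\tau(k)P(\textup{Ad}(k)\inv X)\tau(k)\inv=P(X)$ rearranges, after $k\mapsto k\inv$, to $P\circ\textup{Ad}(k)=\tau(k)P\tau(k)\inv$ rather than to the identity in the statement; to land on $P\circ\textup{Ad}(k)=\tau(k)\inv P\tau(k)$ one must pair $P\mapsto P\circ\textup{Ad}(k)\inv$ with the conjugation $T\mapsto\tau(k\inv)T\tau(k)$ appearing in \eqref{D^K}, which is indeed the action for which $\Lambda$ is equivariant. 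This is a bookkeeping slip of precisely the kind you flag at the end, not a gap in the argument.
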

\medskip

The algebra $\bD(E_\tau)$ of $G$-invariant differential operators acting on smooth sections of the homogeneous bundle $E_\tau$ is then the quotient
$$
\bD(E_\tau)=\big(\mathbb{D}(G)\otimes \textup{End}(V_\tau)\big)^K\Big\slash\big(\mathbb{D}(G)\otimes \textup{End}(V_\tau)\big)_0^K\ ,
$$
 of $\big(\mathbb{D}(G)\otimes \textup{End}(V_\tau)\big)^K$ modulo the ideal of those operators which are trivial on $C^\infty_\tau(G,V_\tau)$.

In fact, we have the relations obtained by differentiating \eqref{section} with respect to $k$: for $X\in\fk$ and $u\in C^\infty_\tau(G,V_\tau)$,
\begin{equation}\label{X=-dtau}
Xu=-d\tau(X)u\ .
\end{equation}

The following statement, proved in \cite{M}, essentially says that the full ideal $\big(\mathbb{D}(G)\otimes \textup{End}(V_\tau)\big)_0^K$  is generated by the relations~\eqref{X=-dtau}.

\begin{theorem}\label{Lambda}
Let $\fp$ be an ${\rm Ad}(K)$-invariant complement of $\fk$ in $\fg$.
Then $\Lambda$ is a linear bijection from $\big(\fS(\frak{p})\otimes \textup{End}(V_\tau)\big)^K$ onto $\mathbb{D}(E_\tau)$.
\end{theorem}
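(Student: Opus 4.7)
The plan is to use exponential coordinates at the base point $o=eK$ to identify the action of $\Lambda(P)$ at $e$ with a constant-coefficient, $\textup{End}(V_\tau)$-valued differential operator on $\fp$. Since $\fp$ is a complement of $\fk$, the map $Y\mapsto\exp(Y)K$ is a local diffeomorphism near $0$, and the right $K$-action on $G$ admits $\exp(\fp)$ as a local transversal. Hence for any $u\in C^\infty_\tau(G,V_\tau)$, the pull-back $\tilde u(Y):=u(\exp Y)$ has an arbitrary germ at $0$ among $V_\tau$-valued smooth germs, and conversely any such germ extends uniquely to a local $K$-equivariant section.

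The key tool is the classical symmetrization identity, valid for arbitrary $Y_1,\dots,Y_r\in\fg$:
$$
\lambda(Y_1\cdots Y_r)f(e)=\frac{\partial^r}{\partial t_1\cdots\partial t_r}\bigg|_{t=0}f\bigl(\exp(t_1Y_1+\cdots+t_rY_r)\bigr),
$$
which follows from the Baker--Campbell--Hausdorff formula together with the symmetry of $\lambda$. When all $Y_i\in\fp$, the argument of $f$ stays in $\exp(\fp)$, so the right-hand side equals $(Y_1\cdots Y_r)(\partial)\tilde f(0)$, the constant-coefficient differential operator on $\fp$ associated with the monomial in $\fS(\fp)$. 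Tensoring with $\textup{End}(V_\tau)$ and writing $P=\sum_I Y^I\otimes T_I\in\fS(\fp)\otimes\textup{End}(V_\tau)$ gives the basic formula
$$
\Lambda(P)u(e)=\bigl(P(\partial)\tilde u\bigr)(0)\in V_\tau,\qquad P(\partial):=\sum_I T_I\,Y^I(\partial).
$$

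Injectivity is then immediate: if $\Lambda(P)$ vanishes on $C^\infty_\tau(G,V_\tau)$, then $P(\partial)\tilde u(0)=0$ for every $V_\tau$-valued germ $\tilde u$ at $0$, forcing $P=0$. For surjectivity, given $D\in\bD(E_\tau)$, the functional $u\mapsto(Du)(e)$ is a finite-order $V_\tau$-valued distribution supported at $\{e\}$; its transport to $\fp$ via $\exp$ is a distribution supported at $0$, and is therefore represented by a unique $P\in\fS(\fp)\otimes\textup{End}(V_\tau)$ via $\tilde u\mapsto P(\partial)\tilde u(0)$. The basic formula then forces $D$ and $\Lambda(P)$ to agree at $e$, and $G$-invariance together with transitivity of $G$ on $G/K$ extends this to all of $G/K$. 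Finally, $K$-invariance of $D$, combined with the relation $\widetilde{k\cdot u}(Y)=\tau(k)\tilde u(\textup{Ad}(k\inv)Y)$, translates through the basic formula and a chain-rule computation into $P\circ\textup{Ad}(k)=\tau(k)\inv P\,\tau(k)$, so $P\in\big(\fS(\fp)\otimes\textup{End}(V_\tau)\big)^K$. The main obstacle is the symmetrization--exponential identity itself; everything else is routine, with Lemma~\ref{Symm} serving implicitly as a check that $\Lambda(P)$ actually lies in $(\bD(G)\otimes\textup{End}(V_\tau))^K$ before one passes to the quotient $\bD(E_\tau)$.
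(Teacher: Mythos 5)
Your argument is correct, but there is no proof in the paper to measure it against: Theorem \ref{Lambda} is imported from Minemura \cite{M}, and the text only records the interpretation that the ideal $\big(\bD(G)\otimes \textup{End}(V_\tau)\big)_0^K$ is generated by the relations \eqref{X=-dtau}. The route implicit in that reference (and in the scalar case in Helgason) is algebraic: one uses the Poincar\'e--Birkhoff--Witt decomposition $\fU(\fg)=\lambda\big(\fS(\fp)\big)\,\fU(\fk)$ to split every invariant operator into a $\fp$-part plus an element of the ideal generated by $X\otimes I+I\otimes d\tau(X)$, $X\in\fk$, and then verifies that this ideal is exactly the kernel of the action on sections. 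You replace this with a jet-theoretic evaluation at the identity, resting on the symmetrization identity $\lambda(Y_1\cdots Y_r)f(e)=\partial_{t_1}\cdots\partial_{t_r}|_{t=0}f\big(\exp(t_1Y_1+\cdots+t_rY_r)\big)$ and on the fact that $(Y,k)\mapsto\exp(Y)k$ is a diffeomorphism of $U\times K$ onto a tube around $K$, so that the germ of $\tilde u=u\circ\exp|_{\fp}$ at $0$ is a free parameter; injectivity and surjectivity then both reduce to the elementary fact that a finite-order $V_\tau$-valued distribution supported at $0\in\fp$ is uniquely of the form $\tilde u\mapsto P(\partial)\tilde u(0)$. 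This is a legitimate and arguably more transparent argument, and it has the virtue of producing the representative $P$ directly from the action of $D$ at the base point. Two places deserve one more sentence each to be airtight, though neither is a gap: (1) realizing an arbitrary germ by a \emph{global} element of $C^\infty_\tau(G,V_\tau)$ requires the tubular neighbourhood $\exp(U)K\cong U\times K$ (which exists by compactness of $K$) together with a cutoff supported in the tube, since the ideal is defined by vanishing on global sections; (2) the ``chain-rule computation'' should be written out: from $(k\cdot u)(x)=u(k\inv x)$ one gets $\widetilde{k\cdot u}=\tau(k)\,\tilde u\circ\textup{Ad}(k\inv)$ and, since $Du$ is again a section, $\big(D(k\cdot u)\big)(e)=\tau(k)(Du)(e)$; comparing the two sides of your basic formula over all germs yields $P\circ\textup{Ad}(k)=\tau(k)\inv P\,\tau(k)$. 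Your closing remark is also the right one: Lemma \ref{Symm} is what guarantees that $\Lambda(P)$ lies in $\big(\bD(G)\otimes\textup{End}(V_\tau)\big)^K$, using that $\big(\fS(\fp)\otimes\textup{End}(V_\tau)\big)^K\subset\big(\fS(\fg)\otimes\textup{End}(V_\tau)\big)^K$ by the $\textup{Ad}(K)$-invariance of $\fp$, so that passing to the quotient $\bD(E_\tau)$ is legitimate.
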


Another consequence of \eqref{X=-dtau}, combined with the identity $\textup{End}(V_\tau)=d\tau\big(\fU(\fk)\big)$, is that any $\textbf{\textit {D}}\in\mathbb{D}(E_\tau)$  acts on $C^\infty_\tau(G,V_\tau)$ in the same way as a (scalar) element of 
$$
\mathbb{D}_K(G):=\{D\in\mathbb{D}(G):D^{\textup{Ad}(k)}=D\}=\{D\in\mathbb{D}(G):D^{R_k}=D\}.
$$

To be more precise, 
for a polynomial $P(x,y)\in \fS(\mathfrak{g})$ on $\fg$ ($x\in\mathfrak{k},y\in\mathfrak{p}$), define the modified symmetrization $\lambda^{\prime}(P)\in \mathbb{D}(G)$ as 
$$
\big(\lambda^{\prime}(P)f\big)(g)=P(\partial_x,\partial_y)_{|_{x=y=0}}f(g\exp y\exp x).
$$ 

It is quite obvious that $\lambda^\prime:\fS(\mathfrak{g})\rightarrow\mathbb{D}(G)$ is the linear bijection uniquely defined by the requirement that, if $P(x,y)=p(x)q(y)$, then
\begin{equation}\label{lambda'=lambdaxlambda}
\lambda^\prime(P)=
\lambda(q)\lambda(p).
\end{equation} 

Moreover,  $\lambda'(u)\in\bD_K(G)$ if and only if $u$ is $\textup{Ad}(K)$-invariant. We set $p^-(x)=p(-x)$.

\begin{corollary} \label{scalarops}
For $D=\lambda'\big(\sum p_j(x)q_j(y)\big)\in \bD_K(G)$, set
$$
A_\tau(D)=\Lambda\Big(\sum_jq_j\otimes d\tau\big(\lambda(p_j^-)\big)\Big)=\sum_j\lambda(q_j)\otimes d\tau\big(\lambda(p_j^-)\big).
$$

Then $A_\tau$ is well defined, it  maps $\bD_K(G)$ linearly onto $\bD(E_\tau)$ and $A_\tau(D)u=Du$ for every $u\in C^\infty_\tau(G,V_\tau)$. 

The identity $A_\tau(D)F=DF$ also holds for $\textup{\rm End}(V_\tau)$-valued smooth functions satisfying the identity $F(xk)=\tau(k)\inv F(x)$,  in particular for $F\in C_{\tau,\tau}\big(G,\textup{\rm End}(V_\tau)\big)$.
\end{corollary}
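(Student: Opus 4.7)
The heart of the proof is the pointwise identity $A_\tau(D)u=Du$ on $C^\infty_\tau(G,V_\tau)$. Using the factorization \eqref{lambda'=lambdaxlambda}, $\lambda'(p(x)q(y))=\lambda(q)\lambda(p)$. The relation \eqref{X=-dtau}, iterated throughout $\fU(\fk)$, shows that every element of $\lambda(\fS(\fk))$ acts on sections as a constant $\textup{End}(V_\tau)$-valued operator; more explicitly, a Taylor expansion of $u(g\exp x)=\tau(\exp(-x))u(g)$ in $x\in\fk$, together with the identity $p(\partial_x)|_0 f(-x)=p^-(\partial_x)|_0 f(x)$, yields $\lambda(p)u=d\tau(\lambda(p^-))u$. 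Since $d\tau(\lambda(p^-))$ commutes with the scalar operator $\lambda(q)$, applying $\lambda(q)\lambda(p)$ and summing over $j$ gives exactly $A_\tau(D)u$. Well-definedness of $A_\tau(D)$ in $\bD(G)\otimes\textup{End}(V_\tau)$ is automatic from the uniqueness of the decomposition $\sum_j p_j\otimes q_j$ in $\fS(\fg)=\fS(\fk)\otimes\fS(\fp)$, and linearity of $A_\tau$ is immediate.

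To conclude that $A_\tau(D)$ descends to a well-defined element of $\bD(E_\tau)$, I must check that it lies in $(\bD(G)\otimes\textup{End}(V_\tau))^K$. Since $\textup{Ad}(K)$ preserves $\fk$ and $\fp$ separately, the hypothesis $D\in\bD_K(G)$ forces $\sum_j p_j\otimes q_j$ to be $K$-invariant in $\fS(\fk)\otimes\fS(\fp)$; combined with $\tau(k)^{-1}d\tau(X)\tau(k)=d\tau(\textup{Ad}(k)^{-1}X)$ (extended to $\fU(\fk)$) and the $\textup{Ad}$-equivariance of $\lambda$, this yields $\mu(k)(A_\tau(D))=A_\tau(D)$. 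For surjectivity, by Theorem \ref{Lambda} any element of $\bD(E_\tau)$ is represented by some $\sum_i q_i\otimes T_i\in(\fS(\fp)\otimes\textup{End}(V_\tau))^K$. Since $d\tau(\fU(\fk))=\textup{End}(V_\tau)$ and $\lambda$ is bijective, I can pick (with no equivariance constraint) $p_i\in\fS(\fk)$ satisfying $T_i=d\tau(\lambda(p_i^-))$; then set $\tilde D=\lambda'\big(\sum_i p_i q_i\big)$ and $D=\int_K\tilde D^{\textup{Ad}(k)}\,dk\in\bD_K(G)$. Unwinding the definitions gives
\begin{equation*}
A_\tau(D)=\int_K\mu(k)\Big(\sum_i\lambda(q_i)\otimes T_i\Big)\,dk=\sum_i\lambda(q_i)\otimes T_i,
\end{equation*}
the last equality because $\sum_i\lambda(q_i)\otimes T_i$, being the image under the $K$-equivariant map $\Lambda=\lambda\otimes I$ of a $K$-invariant symbol, is itself $K$-invariant, so the integrand is constant in $k$. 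This recovers the prescribed element of $\bD(E_\tau)$.

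Finally, for $F\in C^\infty$ satisfying $F(xk)=\tau(k)^{-1}F(x)$, the identity extends column by column: for each $v\in V_\tau$ the function $Fv$ lies in $C^\infty_\tau(G,V_\tau)$, hence $(DF)v=D(Fv)=A_\tau(D)(Fv)=(A_\tau(D)F)v$, and since $v$ is arbitrary, $A_\tau(D)F=DF$. I expect the main obstacle to be the surjectivity step: the lifts $p_i$ of the $T_i$ cannot be chosen canonically $K$-equivariantly, so the averaging procedure, together with the $K$-invariance of $\sum_i\lambda(q_i)\otimes T_i$, is essential to recover the original symbol after applying $A_\tau$.
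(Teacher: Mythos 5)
Your proof is correct and follows essentially the same route as the paper's: the pointwise identity $A_\tau(D)u=Du$ is obtained by the same Taylor expansion of $u(g\exp y\exp x)=\tau(\exp(-x))u(g\exp y)$ combined with \eqref{lambda'=lambdaxlambda}, well-definedness comes from linearity/bilinearity, and the last claim is the column-by-column (equivalently, $C^\infty_\tau(G,V_\tau)\otimes V'_\tau$) identification. Your averaging argument for surjectivity — lifting the $T_i$ to $p_i\in\fS(\fk)$ via $d\tau\big(\fU(\fk)\big)=\textup{End}(V_\tau)$ and then averaging $\tilde D$ over $\textup{Ad}(K)$, using the $K$-equivariance of $A_\tau$ and the invariance of $\Lambda(P)$ — is a welcome expansion of the paper's one-line appeal to Theorem \ref{Lambda}, since the lifts cannot be chosen equivariantly.
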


\begin{proof}
Well defined-ness follows from the linearity of $\lambda$ and $d\tau$; ontoness follows from Theorem \ref{Lambda}.
 
For $u\in C^\infty_\tau(G,V_\tau)$, we have
$$
\begin{aligned}
Du(g)&=\sum_j q_j(\de_y)_{|_{y=0}}p_j(\de_x)_{|_{x=0}}u(g\exp y\exp x)\\
&=\sum_j q_j(\de_y)_{|_{y=0}}p_j(\de_x)_{|_{x=0}}\tau\big(\exp (-x)\big)u(g\exp y)\\
&=\sum_j q_j(\de_y)_{|_{y=0}}d\tau\big(\lambda(p_j^-)\big)u(g\exp y)\\
&=\sum_jd\tau\big(\lambda(p_j^-)\big)\lambda(q_j)u(g)\\
&=A_\tau(D)u(g).
\end{aligned}
$$

The last part of the statement is an obvious consequence of the fact that the space of smooth $\textup{End}(V_\tau)$-valued functions $F$ satisfying the identity $F(xk)=\tau(k)\inv F(x)$ can be identified with $C^\infty_\tau(G,V_\tau)\otimes V'_\tau$.
\end{proof}

\medskip
From the above corollary we can conclude the following :
\begin{itemize}
\item $\textup{Ker}(A_\tau)=
\left\{D\in\mathbb{D}_K(G):D\mid_
{C^\infty_\tau(G,V_\tau)}\right\}
=\left\{D\in\mathbb{D}_K(G):D\mid_
{C^\infty_\tau(G)^{\textup{int}K}}\right\}$ 
\item Let $S_\tau$ and its inverse $S_\tau\inv$ be the operators defined in \eqref{S} and \eqref{Sinv}. For $F\in C^\infty_{\tau,\tau}(G,\textup{End}(V_\tau)$, $f\in C^\infty_\tau(G)^{\textup{int}K}$  and $D\in \bD_K(G)$, Corollary \ref{scalarops} gives the identities
$$
S_\tau\big(A_\tau( D)F\big)=S_\tau(DF)=D(S_\tau F)\ ,\qquad S_\tau\inv(Df)=D(S_\tau\inv f)=A_\tau(D)(S_\tau\inv f).
$$
\end{itemize}

Let $\left(\fS(\frak{p})\otimes\fS(\frak{k})\right)^K$ denotes the Ad$K$ invariant elements in $\fS(\frak{p}\otimes\fS(\frak{k})=\fS(\frak{g})$.

\begin{proposition}\label{diagram}
The diagram

\centerline{
\begin{tikzpicture}[node distance=6.5cm, auto]
\node (A) {$\big(\fS(\frak{p})\otimes \fS(\frak{k})\big)^K$};
\node (B) [right of=A] {$\mathbb{D}_K(G) $};
\node (C) [node distance=3cm, below of=B] {$\mathbb{D}(E_\tau)$};
\node (D) [left of=C] {$\big(\fS(\frak{p})\otimes\textup{End}(V_\tau)\big)^K$};
\draw[->] (A) to node [swap] {$\lambda'$} (B);
\draw[->] (B) to node [swap] {$A_\tau$} (C);
\draw[->] (A) to node [swap] {$I\otimes (d\tau\circ\lambda)$} (D);
\draw[->] (D) to node [swap] {$\Lambda$} (C);
\end{tikzpicture}}
\noindent is commutative, the horizontal arrows indicate bijections and the vertical ones surjections. Moreover, conjugation by $S_\tau$ establishes an isomorphism of algebras between
$\bD(E_\tau)$ and 
$$
\bD_{K,\tau}(G)=\bD_K(G)/\ker A_\tau.
$$
\end{proposition}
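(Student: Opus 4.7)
The plan is to verify the four edges of the diagram in turn and then deduce the algebra statement from the identity $A_\tau(D)F=S_\tau^{-1}\big(D(S_\tau F)\big)$ recorded in the bullet list after Corollary \ref{scalarops}. For the horizontal bijections: $\lambda'$ is a linear bijection from $\fS(\fg)=\fS(\fp)\otimes\fS(\fk)$ onto $\bD(G)$ by \eqref{lambda'=lambdaxlambda}, and it restricts to a bijection on $K$-invariants by the characterization stated in the text that $\lambda'(u)\in\bD_K(G)$ iff $u$ is $\textup{Ad}(K)$-invariant; the bottom map $\Lambda$ is a bijection by Theorem \ref{Lambda}. For the vertical surjections: $A_\tau$ is surjective by Corollary \ref{scalarops}, and the left arrow is surjective because $d\tau\circ\lambda:\fS(\fk)\to\textup{End}(V_\tau)$ is surjective---the bijectivity of $\lambda:\fS(\fk)\to\fU(\fk)$ combined with the identity $\textup{End}(V_\tau)=d\tau(\fU(\fk))$ noted earlier in the section---and because $K$-equivariance of the map permits averaging any preimage of a $K$-invariant element.

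For the commutativity I would chase an element $P=\sum_j q_j\otimes p_j\in(\fS(\fp)\otimes\fS(\fk))^K$ along both paths. The up-then-right path produces, via \eqref{lambda'=lambdaxlambda} and the formula for $A_\tau$ from Corollary \ref{scalarops}, the element $\sum_j\lambda(q_j)\otimes d\tau(\lambda(p_j^-))$ of $\bD(E_\tau)$. The down-then-right path produces $\sum_j\lambda(q_j)\otimes d\tau(\lambda(p_j))$. These two expressions must be identified in $\bD(E_\tau)$ by means of \eqref{X=-dtau}: on $\tau$-equivariant sections, $\lambda(p_j)$ for $p_j\in\fS(\fk)$ acts exactly as $d\tau(\lambda(p_j^-))$, so the apparent sign difference lies in the ideal modulo which $\bD(E_\tau)$ is defined. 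This sign bookkeeping is the main subtlety of the proof.

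To upgrade the surjective linear map $A_\tau:\bD_K(G)\to\bD(E_\tau)$ to an algebra isomorphism on the quotient, I would invoke the identity $A_\tau(D)F=S_\tau^{-1}\big(D(S_\tau F)\big)$ from the bullet list. This exhibits $A_\tau$ as conjugation by $S_\tau$, which visibly preserves composition: $A_\tau(D_1 D_2)=S_\tau^{-1}D_1 D_2 S_\tau=(S_\tau^{-1}D_1 S_\tau)(S_\tau^{-1}D_2 S_\tau)=A_\tau(D_1)A_\tau(D_2)$. Combined with the explicit description of $\ker A_\tau$ from the first bullet and the first isomorphism theorem, this yields the algebra isomorphism $\bD(E_\tau)\cong\bD_K(G)/\ker A_\tau=\bD_{K,\tau}(G)$.
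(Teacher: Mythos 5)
Your overall route --- reading the bijectivity of the horizontal arrows off \eqref{lambda'=lambdaxlambda} and Theorem \ref{Lambda}, the surjectivity of the vertical arrows off Corollary \ref{scalarops} and the identity $\textup{End}(V_\tau)=d\tau\big(\fU(\fk)\big)$ together with averaging over $K$, and the algebra isomorphism off the intertwining relation $S_\tau\big(A_\tau(D)F\big)=D(S_\tau F)$ --- is exactly the derivation the paper leaves implicit (it prints no proof), and those parts are sound.

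The flaw is in your treatment of the commutativity of the square. You correctly compute that the two paths applied to $\sum_j q_j\otimes p_j$ yield $\Lambda\big(\sum_j q_j\otimes d\tau(\lambda(p_j^-))\big)$ and $\Lambda\big(\sum_j q_j\otimes d\tau(\lambda(p_j))\big)$, but your claim that the discrepancy ``lies in the ideal modulo which $\bD(E_\tau)$ is defined'' cannot be right: by Theorem \ref{Lambda} --- which you yourself invoke for the bottom arrow --- $\Lambda$ is \emph{injective} on $\big(\fS(\fp)\otimes\textup{End}(V_\tau)\big)^K$, so the two images agree in $\bD(E_\tau)$ only if $\sum_j q_j\otimes d\tau(\lambda(p_j^-))$ and $\sum_j q_j\otimes d\tau(\lambda(p_j))$ already coincide as invariant elements, which fails in general (take $q=1$ and $p=X$ an $\textup{Ad}(K)$-fixed element of $\fk$ with $d\tau(X)\neq 0$: the two constant matrices differ by a sign). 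The relation \eqref{X=-dtau} says that the \emph{left-invariant differential operator} $\lambda(p)$ acts on $\tau$-equivariant sections as the constant matrix $d\tau(\lambda(p^-))$ --- that is precisely what produces the $p^-$ in the top path --- but it says nothing about the constant endomorphism $d\tau(\lambda(p))$ sitting in the bottom-left corner, which acts on sections as itself; there is no further $\fk$-differentiation left to reduce. The correct repair is not to quotient further but to read the left vertical arrow as $I\otimes(d\tau\circ\lambda\circ{}^-)$, i.e.\ to precompose with the automorphism $p\mapsto p^-$ of $\fS(\fk)$ (induced by $X\mapsto -X$; an algebra automorphism since $\fS(\fk)$ is commutative, and $\textup{Ad}(K)$-equivariant, hence preserving the invariant subspace). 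With that reading the square commutes on the nose, and since this twist is a linear bijection of $\big(\fS(\fp)\otimes\fS(\fk)\big)^K$ onto itself, none of the bijectivity or surjectivity assertions, nor the final isomorphism $\bD(E_\tau)\cong\bD_{K,\tau}(G)$, is affected.
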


\begin{corollary}\label{finite basis}
$\bD(E_\tau)$ and $\bD_{K,\tau}(G)$ are finitely generated algebras. 
\end{corollary}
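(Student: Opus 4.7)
The plan is to reduce both finite generation statements to a single classical fact, namely Hilbert's finiteness theorem applied to the compact group $K$ acting by the adjoint representation.

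First I would recall that $\bD(G) \cong \fU(\fg)$ as filtered algebras, and that the symmetrization map $\lambda$ gives a $K$-equivariant linear bijection $\fS(\fg)\to\fU(\fg)$ which identifies $\fS(\fg)$ with the associated graded algebra of $\fU(\fg)$ with respect to the order filtration. Since $K$ is compact, averaging against Haar measure provides a $K$-equivariant splitting of each graded piece, and consequently
\[
\textup{gr}\,\fU(\fg)^K \;\cong\; \fS(\fg)^K.
\]
Because $K$ is a compact (hence linearly reductive) group acting linearly on the finite dimensional vector space $\fg$, Hilbert's finiteness theorem ensures that $\fS(\fg)^K$ is a finitely generated $\bC$-algebra. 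Lifting any finite set of homogeneous algebra generators of $\fS(\fg)^K$ back to $\fU(\fg)^K = \bD_K(G)$ via $\lambda$ and using the standard filtered-to-graded induction argument then shows that $\bD_K(G)$ itself is finitely generated.

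Next, Proposition \ref{diagram} identifies
\[
\bD_{K,\tau}(G) \;=\; \bD_K(G)\big/\ker A_\tau
\]
as an algebra, so $\bD_{K,\tau}(G)$ is a quotient of the finitely generated algebra $\bD_K(G)$; the images of the generators of $\bD_K(G)$ under the quotient map yield a finite set of generators of $\bD_{K,\tau}(G)$. Finally, the last assertion of Proposition \ref{diagram} states that conjugation by $S_\tau$ establishes an algebra isomorphism $\bD(E_\tau) \cong \bD_{K,\tau}(G)$, so the finite generation transfers to $\bD(E_\tau)$ as well.

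I do not anticipate any serious obstacle: the only subtle point is that $\lambda$ and $\lambda'$ are merely linear bijections and not algebra homomorphisms, which means finite generation of $\fS(\fg)^K$ cannot be transported to $\bD_K(G)$ directly as an algebra statement. This is precisely why I phrase the argument in terms of the associated graded algebra, where $K$-invariance is preserved because $K$ is compact. Once that reductive-group input is isolated, the rest is routine.
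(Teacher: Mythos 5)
Your proof is correct and follows essentially the same route as the paper: Hilbert's finiteness theorem for the $\textup{Ad}(K)$-invariants $\fS(\fg)^K$, the observation that symmetrization respects products modulo lower order (which is exactly your associated-graded formulation), induction on degree to get finite generation of $\bD_K(G)$, and then passage to the quotient $\bD_{K,\tau}(G)$ and to $\bD(E_\tau)$ via the isomorphism of Proposition \ref{diagram}. The only cosmetic difference is that the paper phrases the key step as $\la'(u_ju_k)\equiv\la'(u_j)\la'(u_k)$ modulo terms of strictly smaller degree, using the modified symmetrization $\la'$, rather than in the language of $\textup{gr}\,\fU(\fg)^K$.
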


\begin{proof}
It suffices to prove that $\bD_{K,\tau}(G)$ is finitely generated.
By the Hilbert basis theorem, the space $I(\fg)$ of $\textup{Ad}(K)$-invariant polynomials on $\fg$ is finitely generated. Let $u_1,\dots,u_m$ be a Hilbert basis, with $\deg u_j=d_j$. Then $\la'(u_ju_k)\equiv \la'(u_j)\la'(u_k)$ modulo elements of degree strictly smaller than $d_j+d_k$.  This implies, by induction on the degree, that $\bD_K(G)$ is  generated by $\la'(u_1),\dots,\la'(u_m)$.

Being a quotient of $\bD_K(G)$,  $\bD_{K,\tau}(G)$ is also finitely generated.
\end{proof}

\vskip.5cm

\section{Characterization of spherical functions as eigenfunctions}\label{sect:eigenfunctions}
\vskip.5cm

Like for the standard case of a pair $(G,K)$ \cite{T,W},  commutativity of  convolution algebras is equivalent to commutativity of  algebras of invariant differential operators, under the assumption that $G/K$ is connected. We remark that this statement is usually formulated under the stronger assumption that $G$ is connected.

\begin{theorem}\label{gelfand triple iff diff op commutative III} 
Consider the following statements:
\begin{enumerate}
\item[\rm(i)] $(G,K,\tau)$ is a commutative triple,
\item[\rm(ii)] $\mathbb{D}(E_\tau)$ is commutative,
\item[\rm(iii)] $D_{K,\tau}(G)$ is commutative.
\end{enumerate}

Then {\rm (i)}$\Rightarrow${\rm (ii)}$\Leftrightarrow${\rm (iii)}. If $G/K$ is connected, they are all equivalent.
\end{theorem}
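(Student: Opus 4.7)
Proof plan. The equivalence (ii) $\Leftrightarrow$ (iii) is immediate from the last assertion of Proposition \ref{diagram}: conjugation by $S_\tau$ realizes an algebra isomorphism $\bD(E_\tau) \cong \bD_{K,\tau}(G)$, so commutativity transfers between them.

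For (i) $\Rightarrow$ (ii), my plan is to approximate differential operators by $L^1$-functions. Every $D \in \bD(E_\tau)$ is represented by a compactly supported bi-$\tau$-equivariant distribution $T_D$ concentrated at $e$, and composition corresponds to convolution of distributions. Fixing a bi-$\tau$-equivariant approximate identity $\{\psi_n\} \subset \cD \cap L^1_{\tau,\tau}\big(G, \textup{End}(V_\tau)\big)$ (obtained by projecting standard mollifiers by averaging against $\tau$ over $K \times K$), set $F_n^{(j)} := T_{D_j} * \psi_n \in \cD \cap L^1_{\tau,\tau}$; these converge distributionally to $T_{D_j}$. Hypothesis (i) then gives $F_n^{(1)} * F_n^{(2)} = F_n^{(2)} * F_n^{(1)}$, and passing to the distributional limit yields $T_{D_1} * T_{D_2} = T_{D_2} * T_{D_1}$, i.e.\ $D_1 D_2 = D_2 D_1$.

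The hard direction is (ii) $\Rightarrow$ (i) under the connectedness hypothesis, where I would adapt the Thomas--Deitmar strategy via joint eigenfunctions. The central claim is: any smooth bi-$\tau$-equivariant joint eigenfunction $\Phi$ of $\bD(E_\tau)$ with $\Phi(e) = I$ is uniquely determined by its system of eigenvalues. To prove it, one picks an $\textup{Ad}(K)$-invariant positive-definite quadratic form on $\fp$ whose symmetrization belongs to $\bD(E_\tau)$ and is transversally elliptic, forcing $\Phi$ to be real-analytic. Derivatives of $\Phi$ at $e$ in $\fk$-directions are fixed by the identity $X\Phi = -\Phi \cdot d\tau(X)$ obtained from \eqref{section}. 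For $\fp$-directions, the $K$-equivariance $\Phi(\exp \textup{Ad}(k) X) = \tau(k) \Phi(\exp X) \tau(k^{-1})$ places the $n$-th Taylor coefficient of $X \mapsto \Phi(\exp X)$ in $(\fS^n(\fp^*) \otimes \textup{End}(V_\tau))^K$, and the bijectivity of $\Lambda$ in Theorem \ref{Lambda}, paired with the natural non-degenerate pairing against $(\fS^n(\fp) \otimes \textup{End}(V_\tau))^K$, shows that the eigenvalues $\lambda_{\Lambda(P)}$ determine that coefficient. Real-analyticity propagates the resulting uniqueness throughout $G^0$, and the identity $G = K G^0$ (equivalent to $G/K$ connected) extends it to all of $G$.

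Granted the uniqueness, commutativity of $L^1_{\tau,\tau}$ follows. For $F \in L^1_{\tau,\tau}$, the function $F * \Phi$ is again a joint eigenfunction with the same eigenvalues (since $\bD(E_\tau)$ commutes with left convolution) and $(F*\Phi)(e) \in \textup{End}_K(V_\tau) = \bC I$ by Schur, so uniqueness forces $F * \Phi = c(F)\Phi$. The resulting scalar $c$ is a character of $L^1_{\tau,\tau}$, whence $c(F_1 * F_2) = c(F_2 * F_1)$; varying $\Phi$ over joint eigenfunctions attached to irreducible unitary representations of $G$ containing $\tau$---which separate $L^1_{\tau,\tau}$ by a Gelfand--Raikov argument---yields $F_1 * F_2 = F_2 * F_1$. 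The main obstacle is the uniqueness claim: translating the bijectivity in Theorem \ref{Lambda} into determination of the full Taylor expansion at $e$ and then globalizing via real-analyticity and connectedness. The final separation step is secondary but also deserves care.
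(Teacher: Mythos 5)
Your treatment of (ii)$\Leftrightarrow$(iii) matches the paper's (both invoke Proposition \ref{diagram}), and your mollification argument for (i)$\Rightarrow$(ii) is a valid alternative to the paper's route: the paper instead proves (i)$\Rightarrow$(iii) in the scalar picture, convolving $f\in C^\infty_\tau(G)^{\textup{int}K}$ against a $K$-central approximate identity $u_\tau$ and shuffling $D_1,D_2\in\bD_K(G)$ through the commuting convolutions. Your version needs the minor corrections that the kernel $T_D$ of $D\in\bD(E_\tau)$ is supported on $K$ (not at $e$) and that the limit of $\psi_n$ is the measure $\tau(k\inv)\,dm_K(k)$ rather than $\delta_e$, but with joint continuity of convolution of compactly supported distributions the limit passage is sound.

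The genuine gap is in (ii)$\Rightarrow$(i), at the step you call ``secondary'': the claim that the characters $c_\Phi$ attached to joint eigenfunctions separate $L^1_{\tau,\tau}$. This is not a routine Gelfand--Raikov application; it is essentially the statement being proved. If some $\pi\in\widehat G$ contained $\tau$ with multiplicity $>1$ (which is exactly what you must exclude, by Theorem \ref{representation theoretic criteria-general case}), then $\pi$ contributes a multiplicity space $M_\pi$ of dimension $>1$ --- possibly infinite, since without commutativity there is no admissibility --- on which $\bD(E_\tau)$ acts by commuting operators and $L^1_{\tau,\tau}$ by operators you cannot yet assume commute. The joint eigenfunctions attached to $\pi$ only detect the common eigenvectors of the $\bD(E_\tau)$-action on $M_\pi$, and commuting operators on a space of dimension $>1$ are not determined by their joint eigenvalues (they need not even have a spanning set of joint eigenvectors without a separate normality argument, and common eigenvectors need not exist at all when $\dim M_\pi=\infty$). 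So $c_\Phi(F_1*F_2-F_2*F_1)=0$ for all $\Phi$ does not yield $F_1*F_2=F_2*F_1$ without first establishing multiplicity-freeness --- a circularity. Your uniqueness lemma, by contrast, is essentially correct (it is the matrix-valued analogue of Theorem \ref{spherical function uniquely determined by eigenvalues}, via the non-degenerate trace pairing between the two $K$-equivariance classes in $\fS^n(\fp)\otimes\textup{End}(V_\tau)$ and $\fS^n(\fp^*)\otimes\textup{End}(V_\tau)$), but it does not carry the weight you place on it. The paper avoids representation theory entirely here: it takes a $K$-central analytic $u$, forms $U(x,y)=\int_K u_\tau(kxk\inv y)\,dk$, observes $U$ is symmetric on $K\times K$, uses commutativity of the averaged operators $D_i^0\in\bD_K(G)$ to show all mixed derivatives $D_{1,x}D_{2,y}U$ and $D_{1,y}D_{2,x}U$ agree there, propagates the symmetry $U(x,y)=U(y,x)$ by real-analyticity to $G_e\times G_e$ and then to $G\times G$ using that every component of $G$ meets $K$ (this is where $G/K$ connected enters), and finally deduces $\int(f*g)u=\int(g*f)u$ for a dense family of $u$. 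You would need either to adopt this Thomas-style symmetrization or to supply a genuinely new argument for the separation step.
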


\begin{proof}
The equivalence of (ii) and (iii) follows from Proposition \ref{diagram}.

To prove that (i)$\Rightarrow$(iii), let $D\in D_K(G)$, $f\in C^\infty_\tau(G)^{\textup{int} K}$,
 $u\in C_c^\infty(G)^{\textup{int}K}$. 
 
With $u_\tau=u*(d_\tau\bar\chi_\tau m_K)$,
we have 
\begin{itemize}
\item $Df\in C^\infty_\tau(G)^{\textup{int} K}$,
\item $Du_\tau=(Du)_\tau\in C^\infty_\tau(G)^{\textup{int} K}$,
\item $u*f=u_\tau*f=f*u_\tau=f*u$.
\end{itemize}

Hence, given $D_1,D_2\in D_K(G)$,
$$
\begin{aligned}
u*(D_1D_2f)&=D_1D_2(u*f)=D_1D_2(u_\tau *f)\\
&=D_1D_2(f*u_\tau)=D_1\big(f*(D_2u_\tau)\big)\\
&=D_1\big((D_2u_\tau)*f\big)=(D_2u_\tau)*(D_1f)\\
&=(D_1f)*(D_2u_\tau)=D_2\big((D_1f)*u_\tau\big)\\
&=(D_1f)*(D_2u_\tau)=u_\tau*(D_2D_1f)\\
&=u*(D_2D_1f)\ .
\end{aligned}
$$ 

Applying this identity to an approximate identity $\{u_j\}\subset C_c^\infty(G)^{\textup{int}K}$, we conclude that $D_1D_2f=D_2D_1f$, hence $D_1D_2=D_2D_1$ in the quotient algebra $D_{k,\tau}(G)$.

We consider now the opposite implication (iii)$\Rightarrow$(i) assuming first that $G$ is connected. 

Let  $u$ be any $K$-central analytic function. 
The function 
$$
U(x,y)=\int_Ku_\tau(kxk^{-1}y)dk.
$$ 
 is analytic in $(x,y)\in G\times G$, $K$-central in each variable and satisfies the identity $U(k_1,k_2)=U(k_2,k_1)$ for all $k_1,k_2\in K$. 
 
 Let $D_1,D_2\in \mathbb{D}(G)$. 
 Defining $D_i^0:=\int_KD_i^{\textup{Ad}K}dk$,  for any $K$-central function $g$,   
 $D_i^0g$  is $K$-central.  
Then, for every $k_1,k_2\in K$,
\begin{eqnarray*} 
D_{1,x}D_{2,y}U(k_1,k_2)&=& D_{1,x}^0D_{2,y}^0U(k_1,k_2)\\
&=&\int_K D_{1,x}^0\big(D_2^0u_\tau(kxk^{-1}k_2)\big)\mid_{x=k_1}dk\\
&=&\int_K D_{1,x}^0\big(D_2^0u_\tau(k^{-1}k_2kx)\big)\mid_{x=k_1}dk\\
&=&\int_K D_1^0D_2^0u_\tau(k^{-1}k_2kk_1)\big)dk\\
&=&\int_K D_2^0D_1^0u_\tau(k^{-1}k_2kk_1)\big)dk\\
&=&\int_K D_2^0D_1^0u_\tau(kk_1k^{-1}k_2)\big)dk\\
&=&D_{2,x}D_{1,y}U(k_2,k_1)\\
&=&D_{1,y}D_{2,x}U(k_2,k_1).
\end{eqnarray*} 

Denote by $G_e$ the connected component of $e$ in $G$. 
By the analyticity of $U$, it follows that $U(k_1x,k_2y)=U(k_2y,k_1x)$ for all $k_1,k_2\in K$ and $x,y\in G_e$. Since $G/K$ is connected, every connected component of $G$ contains an element of $K$. Hence $U(x,y)=U(y,x)$ for all $x,y\in G$.

This implies that, for any  $f,g\in L^1_{\tau}(G)^{\textup{int}K}$ and 
compactly supported,
$$
\int_Gg*f(x)u(x)dx=\int_Gg*f(x)u_\tau(x)dx=
\int_G\int_Gf(y)g(x)U(x,y)dxdy$$ and similarly $$\int_Gf*g(x)u(x)d=\int_G\int_Gg(y)f(x)U(x,y)dxdy=
\int_G\int_Gg(x)f(y)U(y,x)dxdy.
$$ 
Therefore $\int_G g*f(x)u(x)dx=\int f*g(x)u(x)$ for all $K$-central analytic functions $u$.
Since analytic integrable functions are dense in $L^1(G)$, the same is true for $L^1(G)^{\textup{int}K}$, hence $f*g=g*f$. 
\end{proof}

\begin{theorem}\label{spherical-diff}
Let $(G,K,\tau)$ be a commutative triple, with $G/K$ connected. For $\Phi\in L^\infty_{\tau,\tau}(G,\textup{End}(V_\tau))$ and $\phi=d_\tau {\rm Tr}\Phi\in L^\infty_\tau(G)^{\textup{int}(K)}$, the following are equivalent : 
\begin{enumerate}
\item[(i)]  $\Phi$ is a $\tau$-spherical function;
\item[(ii)]   $\Phi$ is a joint eigenfunction for all $\textbf{D}\in\big(\mathbb{D}(G)\otimes\textup{End}(V_\tau)\big)^K$  and $\Phi(e)=I$.
\item[(iii)] $\Phi$ is a joint eigenfunction for all $D\in\mathbb{D}_{K}(G)$ and $\Phi(e)=1$.
\item[(iv)] $\phi$ is a trace $\tau$-spherical function;
\item[(v)]  $\phi$ is a joint eigenfunction for all $D\in\mathbb{D}_{K}(G)$ and $\phi(e)=1$.
\end{enumerate}

In particular, both kinds of spherical functions are analytic.
\end{theorem}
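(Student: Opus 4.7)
The plan is to establish the cycle of equivalences as follows. First, (i)$\Leftrightarrow$(iv) is precisely Theorem \ref{characterization of spherical function in terms of functional equation-1}. For (ii)$\Leftrightarrow$(iii) I invoke Corollary \ref{scalarops}: every $\mathbf{D}\in(\mathbb{D}(G)\otimes\textup{End}(V_\tau))^K$ acts on bi-$\tau$-covariant $\textup{End}(V_\tau)$-valued functions as $A_\tau(D)$ for some $D\in\mathbb{D}_K(G)$, while conversely $D\otimes I$ belongs to $(\mathbb{D}(G)\otimes\textup{End}(V_\tau))^K$ whenever $D\in\mathbb{D}_K(G)$, so the eigenfunction properties transfer back and forth with matching scalar eigenvalues. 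For (iii)$\Leftrightarrow$(v) I use the intertwining identities $S_\tau(DF)=D(S_\tau F)$ and $D(S_\tau^{-1}f)=S_\tau^{-1}(Df)$ for $D\in\mathbb{D}_K(G)$, recorded immediately after Corollary \ref{scalarops}, which transport the eigenvalue $\lambda_D$ between the matrix-valued and scalar-valued pictures.

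For (i)$\Rightarrow$(iii) I would apply a scalar $D\in\mathbb{D}_K(G)$ (viewed as $D\otimes I$) in the $y$-variable to the functional equation \eqref{functional equation for Phi} and then set $y=e$. Left-invariance gives $D_y\Phi(xky)=(D\Phi)(xky)$; since $D\in\mathbb{D}_K(G)$, the function $D\Phi$ preserves the bi-$\tau$-covariance of $\Phi$, so the Schur orthogonality relation $d_\tau\int_K\tau(k^{-1})\chi_\tau(k)\,dk=I$ collapses the left-hand integral to $(D\Phi)(x)$. The right-hand side becomes $(D\Phi)(e)\Phi(x)$. Evaluating the bi-$\tau$-covariance of $D\Phi$ along $K$ forces $(D\Phi)(e)$ to commute with $\tau(K)$; Schur's lemma for the irreducible $\tau$ then yields $(D\Phi)(e)=\lambda_D I$, hence $D\Phi=\lambda_D\Phi$.

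The principal obstacle is (iii)$\Rightarrow$(i), where connectedness of $G/K$ enters essentially. I would first derive analyticity: the symmetrization of a $K$-invariant positive-definite inner product on $\fg$ gives a left-invariant elliptic element of $\mathbb{D}_K(G)$, so by elliptic regularity both $\Phi$ and $\phi$ are real-analytic. Next I would compare $\Psi(x,y):=d_\tau\int_K\Phi(xky)\chi_\tau(k)\,dk$ with $\Phi(y)\Phi(x)$ via Taylor jets in $y$ at $y=e$. Both are analytic $\textup{End}(V_\tau)$-valued functions of $y$ sharing the same right-$K$-covariance, agree at $y=e$ (both equal $\Phi(x)$, by Schur orthogonality on one side and $\Phi(e)=I$ on the other), and are joint eigenfunctions of $\mathbb{D}_K(G)$ in $y$ with identical eigenvalues. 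By Theorem \ref{Lambda} the jet at $e$ of such a right-$\tau$-covariant section is parameterized by derivatives along $\fS(\fp)$, while relation \eqref{X=-dtau} reduces $\fk$-contributions to multiplication by $-d\tau$; Corollary \ref{scalarops} combined with the commutativity of $\mathbb{D}(E_\tau)$ from Theorem \ref{gelfand triple iff diff op commutative III} then allows each $\fS(\fp)$-derivative at $e$ to be computed inductively from the eigenvalues $\lambda_D$ alone. Thus the jets of $\Psi(x,\cdot)$ and $\Phi(\cdot)\Phi(x)$ at $e$ coincide; analyticity propagates the equality throughout the connected component $G_e$, and the decomposition $G=KG_e$ (a consequence of $G/K$ being connected) together with the right-$K$-covariance of both sides extends the identity to all of $G$, recovering \eqref{functional equation for Phi}. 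Analyticity of the spherical functions, the final assertion, is the regularity remark already used.
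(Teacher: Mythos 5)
Most of your reductions agree with the paper's: (i)$\Leftrightarrow$(iv) is Theorem \ref{characterization of spherical function in terms of functional equation-1}, (ii)$\Leftrightarrow$(iii) is Corollary \ref{scalarops}, (iii)$\Leftrightarrow$(v) is the $S_\tau$-intertwining, and your derivation of (i)$\Rightarrow$(iii) by applying $D_y$ to \eqref{functional equation for Phi} at $y=e$ and invoking Schur is sound (modulo the routine verification that a bounded spherical function is smooth, which you, like the paper, leave implicit). The problem is the key direction (iii)$\Rightarrow$(i), where your argument has a genuine gap.

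You compare $\Psi(x,\cdot)=d_\tau\int_K\Phi(xk\,\cdot)\chi_\tau(k)\,dk$ with $\Phi(\cdot)\Phi(x)$ as functions of $y$, and claim their jets at $e$ coincide because both are right-$\tau$-covariant joint eigenfunctions of $\mathbb{D}_K(G)$ with the same eigenvalues and the same value at $e$. But a right-$\tau$-covariant joint eigenfunction is \emph{not} determined by its eigenvalues and its value at $e$: the jet at $e$ is parameterized by all derivatives $\lambda(q)u(e)$, $q\in\fS(\fp)$, whereas the eigenvalue data only controls the $\mathrm{Ad}(K)$-invariant combinations of such derivatives that occur in $\big(\fS(\fp)\otimes\textup{End}(V_\tau)\big)^K$; there is no inductive procedure recovering the non-invariant ones. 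Concretely, take $G=SO(n)\ltimes\bR^n$, $K=SO(n)$, $\tau$ trivial: the plane waves $u(y)=e^{i\xi\cdot y}$ are right-$K$-invariant joint eigenfunctions of $\mathbb{D}_K(G)=\bC[\Delta]$ with $u(e)=1$, and all $\xi$ on a fixed sphere give identical eigenvalues but different first jets. What rescues the uniqueness argument (Theorem \ref{spherical function uniquely determined by eigenvalues}) is two-sided covariance: for a $K$-central function one averages $D_0=\int_K D^{\textup{Ad}k}dk$ to land in $\mathbb{D}_K(G)$ without changing $D\phi(e)$. Your functions of $y$ are not bi-$\tau$-covariant — $\Psi(x,k'y)$ does not transform by an outer factor of $\tau(k'^{-1})$, and $\Phi(k'y)\Phi(x)=\Phi(y)\tau(k'^{-1})\Phi(x)$ inserts the twist in the middle — so the averaging trick is unavailable and the jets cannot be matched as you propose. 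This is precisely why the paper proves the hard implication in the scalar picture, as (v)$\Rightarrow$(iv): it lifts $\phi$ to a bi-$K^\sharp$-invariant function on $K\ltimes G$ and adapts Helgason's argument, where the relevant auxiliary function $x\mapsto\int_K\phi(xkyk^{-1})dk$ \emph{is} $K$-central in $x$ and the uniqueness theorem applies. To repair your proof you should either pass to the trace picture in the same way, or show directly that $F*\Phi$ is a bi-$\tau$-covariant joint eigenfunction with the eigenvalues of $\Phi$ (hence a scalar multiple of $\Phi$ by the bi-covariant uniqueness), which yields the homomorphism property.
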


\begin{proof}
The equivalence of (ii) and (iii) follows from Corollary \ref{scalarops}.
To complete the proof, it suffices to prove the equivalence of (iv) and (v).

Lifting $K$-central functions on $G$ to bi-$K^\sharp$-invariant functions on $K\ltimes G$, with $K^\sharp={\rm diag}(K)$, one can see that  condition \eqref{functional equation for phi} is equivalent to the functional equation $\int_K\phi^\sharp(gkg')dk=\phi^\sharp(g)\phi^\sharp(g')$ for the lifted function $\phi^\sharp$.

The proof in \cite[Ch. IV, Prop. 2.2]{H} can then be adapted to prove that a non trivial $K$-central bounded function $\phi$ satisfies \eqref{functional equation for phi} if and only if it is an eigenfunction of all $D\in\mathbb{D}_K(G)$. 

Analiticity of $\phi$ follows from the existence of a $K$-central left-invariant Laplacian on $G$, and that of $\Phi$ by Theorem \ref{characterization of spherical function in terms of functional equation-1}.
\end{proof}

\begin{corollary}\label{triple->pair}
Let $(G,K,\tau)$ be a commutative triple with $G/K$ is connected. Then $(G,K)$ is a Gelfand pair. In particular, $G$ is unimodular.
\end{corollary}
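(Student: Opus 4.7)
The plan is to exploit Theorem \ref{gelfand triple iff diff op commutative III} in both directions. Under the assumption that $G/K$ is connected, the hypothesis gives that $\bD(E_\tau)\cong \bD_{K,\tau}(G)=\bD_K(G)/\ker A_\tau$ is commutative. On the other hand, ``$(G,K)$ is a Gelfand pair'' means by definition that the bi-$K$-invariant convolution algebra $L^1(K\backslash G/K)$ is commutative; observing that this is precisely $L^1_1(G)^{\textup{int}K}$ for the trivial representation $\tau=1$, it is equivalent to saying that the triple $(G,K,1)$ is commutative, and by the same theorem applied with $\tau=1$ to the commutativity of $\bD(G/K)\cong\bD_K(G)/\ker A_1$. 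The unimodularity of $G$ will then follow from the standard fact that any Gelfand pair with compact $K$ is unimodular.

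The heart of the proof is therefore the purely algebraic implication: commutativity of the quotient $\bD_K(G)/\ker A_\tau$ implies commutativity of the quotient $\bD_K(G)/\ker A_1$. By Corollary \ref{finite basis}, $\bD_K(G)$ is finitely generated by the symmetrizations $\lambda'(u_1),\ldots,\lambda'(u_m)$ of a Hilbert basis of $\fS(\fg)^K$, so both commutativity statements reduce to containment of the finitely many commutators $[\lambda'(u_i),\lambda'(u_j)]$ in the respective kernels; these commutators have order strictly less than $\deg u_i+\deg u_j$. Using the explicit formula $A_\tau\bigl(\lambda'(\sum p_j q_j)\bigr)=\sum_j \lambda(q_j)\otimes d\tau(\lambda(p_j^-))$ from Corollary \ref{scalarops}, the two images can be compared: $A_1(D)=\sum_j p_j(0)\,\lambda(q_j)$ (since $d\tau$ vanishes on all positive-degree elements when $\tau$ is trivial), while $A_\tau(D)$ evaluates each $p_j$ through the representation $d\tau$. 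One must then show that, for the restricted class of commutators of symmetrized $\mathrm{Ad}(K)$-invariants, vanishing of the image in $\bD(E_\tau)$ forces vanishing in $\bD(G/K)$.

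The main obstacle is that the kernels $\ker A_\tau$ and $\ker A_1$ are, in general, incomparable. For instance, the Casimir $\Omega_K$ of $\fk$ lies in $\ker A_1$ (since every $X\in\fk$ annihilates right $K$-invariant functions, by \eqref{X=-dtau} specialized to $\tau=1$), but it acts on $V_\tau$ as the non-zero Schur scalar $c_\tau$ and therefore does not lie in $\ker A_\tau$; conversely $\Omega_K-c_\tau$ lies in $\ker A_\tau\setminus\ker A_1$. Hence the required implication cannot follow from a direct inclusion of ideals and must genuinely exploit the restricted form of commutators of symmetrized invariants. A possible alternative route is to argue directly at the convolution level, adapting the analytic density argument from the proof of Theorem \ref{gelfand triple iff diff op commutative III}; this approach faces the parallel difficulty that $\tau$-type sections and bi-$K$-invariant scalar functions sit in disjoint $K$-isotypic components of $C^\infty(G)$, so it is not immediate how commutativity of the former's convolution algebra transfers to the latter.
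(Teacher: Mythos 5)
Your setup is the paper's: reduce both ``$(G,K,\tau)$ commutative'' and ``$(G,K)$ Gelfand'' to commutativity of algebras of invariant differential operators via Theorem \ref{gelfand triple iff diff op commutative III}, and your diagnosis of the central difficulty is accurate --- the ideals $\ker A_\tau$ and $\ker A_1$ of $\bD_K(G)$ are indeed incomparable, and your example ($\Omega_K\in\ker A_1\setminus\ker A_\tau$ while $\Omega_K-c_\tau\in\ker A_\tau\setminus\ker A_1$) is correct. But the proposal stops exactly where the proof has to begin. The assertion ``one must then show that, for commutators of symmetrized $\mathrm{Ad}(K)$-invariants, vanishing of the image in $\bD(E_\tau)$ forces vanishing in $\bD(G/K)$'' is announced as the remaining task and never carried out, and the alternative convolution-level route is likewise only flagged as problematic. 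What you have written establishes only that the two obvious strategies do not work as stated; it does not establish the corollary. This is a genuine gap, not a stylistic one.

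The paper avoids the kernel comparison inside $\bD_K(G)$ altogether by working on the polynomial side of the symmetrization and by using a different, smaller set of generators. Writing $I(\fp)$ for the $\mathrm{Ad}(K)$-invariant polynomials on $\fp$, one has $I(\fp)\otimes I\subset\big(\fS(\fp)\otimes\textup{End}(V_\tau)\big)^K$, so by Theorem \ref{Lambda} the operators $\Lambda(p\otimes I)=\lambda(p)\otimes I$, $p\in I(\fp)$, all lie in $\bD(E_\tau)$, which is commutative by Theorem \ref{gelfand triple iff diff op commutative III}; hence $\lambda\big(I(\fp)\big)$ acts commutatively. Since by \cite[Thm.~4.9]{H} the symmetrization $\lambda$ carries $I(\fp)$ bijectively onto $\bD(G/K)$ for a reductive coset space, $\bD(G/K)$ is commutative, and the Gelfand pair property then follows from the converse implication of Theorem \ref{gelfand triple iff diff op commutative III} for the trivial representation (this is where connectedness of $G/K$ is used); unimodularity is the standard consequence. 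The key structural difference from your plan is the choice of generators: you propose to test commutators of $\lambda'(u_i)$ for a Hilbert basis of $\fS(\fg)^K$, and it is precisely the $\fk$-directions in those elements (e.g.\ $\Omega_K$) that make the two kernels incomparable; the paper instead tests only elements coming from $\fS(\fp)^K$, which by Helgason's theorem already exhaust $\bD(G/K)$, so that no element of $\fU(\fg)\fk$ ever has to be compared across the two pictures. If you want to salvage your argument, this is the substitution to make, after which you should still supply the details of why commutativity of the classes $\lambda(p)\otimes I$ in $\bD(E_\tau)$ descends to $\bD(G/K)$ via the two symmetrization bijections.
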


\begin{proof}
By Theorem \ref{gelfand triple iff diff op commutative III}, $\mathbb{D}(E_\tau)$ is commutative. Let $I(\frak{p})$ denote the space of all $\textup{Ad}(K)$-invariant polynomials on $\frak{p}$. Then 
clearly $I(\frak{p})\otimes I\subset\big(\fS(\frak{p})\otimes\textup{End}(V_\tau)\big)^K$. Therefore, in view of Theorem \ref{Lambda}, it follows that $\lambda(I(\frak{p}))$ is commutative. But this implies, by \cite[Thm. 4.9]{H}, that $\mathbb{D}(G/K)$ is commutative. Hence $(G,K)$ is a Gelfand pair.
\end{proof}

\vskip.5cm

\section{Gelfand spectrum and embeddings into Euclidean spaces}\label{sect:embeddings}
\vskip.5cm

The following theorem says that the $\tau$-spherical functions are uniquely determined by the set of their eigenvalues.

\begin{theorem}\label{spherical function uniquely determined by eigenvalues}
Let $(G,K,\tau)$ be a commutative triple. Let $\phi_1$ and $\phi_2$ be two trace $\tau$-spherical function so that $D\phi_i=\mu_i(D)\phi_i$ (i=1,2) for all $D\in\mathbb{D}_K(G)$. If $\mu_1(D)=\mu_2(D)$ for all $D\in\mathbb{D}_{K}(G)$, then $\phi_1=\phi_2$. 
\end{theorem}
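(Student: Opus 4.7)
The plan is to show that the difference $\psi:=\phi_1-\phi_2$ is identically zero. By Theorem \ref{characterization of spherical function in terms of functional equation-1} together with Theorem \ref{spherical-diff}, each $\phi_i$ is analytic, $K$-central, of type $\tau$, a joint eigenfunction of $\bD_K(G)$, and satisfies $\phi_i(e)=1$; writing $\mu:=\mu_1=\mu_2$, the difference $\psi$ inherits analyticity and $K$-centrality, satisfies $D\psi=\mu(D)\psi$ for every $D\in\bD_K(G)$, and $\psi(e)=0$. Evaluating at $e$ yields $(D\psi)(e)=0$ for every $D\in\bD_K(G)$. The whole argument thus reduces to the analytic assertion that \emph{an analytic, $K$-central function on $G$ whose $D$-derivatives at $e$ vanish for every $D\in\bD_K(G)$ must vanish in a neighbourhood of $e$}.

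To establish this, I pull back via the exponential map and set $\tilde\psi(X):=\psi(\exp X)$ for $X$ near $0\in\fg$. Since $\psi$ is $K$-central and $\exp$ intertwines $\textup{Ad}(k)$ with inner conjugation by $k$, the function $\tilde\psi$ is analytic and $\textup{Ad}(K)$-invariant, so every homogeneous component $\tilde\psi_m$ of its Taylor expansion at $0$ lies in $I_m(\fg):=\fS^m(\fg)^K$, in the notation of Lemma \ref{Symm}. For any $p\in I_m(\fg)$ the symmetrization $\la(p)$ belongs to $\bD_K(G)$, and the classical identity
$$
\bigl(\la(p)\psi\bigr)(e)\;=\;p(\de_t)\big|_{t=0}\,\psi\bigl(\exp(t_1X_1+\cdots+t_nX_n)\bigr)\;=\;p(\de)\,\tilde\psi_m(0)
$$
(for a basis $\{X_i\}$ of $\fg$, with $p(\de)$ the corresponding constant-coefficient operator) forces $p(\de)\tilde\psi_m(0)=0$ for every $m\geq0$ and every $p\in I_m(\fg)$.

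The vanishing $\tilde\psi_m=0$ then reduces to the non-degeneracy of the bilinear pairing $\lan p,q\ran:=p(\de)q(0)$ on $I_m(\fg)\times I_m(\fg)$, which I expect to be the main technical point. On the full symmetric power $\fS^m(\fg)$ this pairing is classically non-degenerate (by a direct monomial computation); fixing an $\textup{Ad}(K)$-invariant inner product on $\fg$ (available because $K$ is compact) and expressing it in an orthonormal basis renders it $K$-invariant, and averaging an arbitrary partner of a given $0\neq q\in I_m(\fg)$ over $K$ then produces a partner lying in $I_m(\fg)$. Hence the restriction is non-degenerate, $\tilde\psi_m=0$ for every $m$, and $\psi\equiv0$ in a neighbourhood of $e$; analyticity of $\psi$ propagates this to the identity component $G_e$.

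Finally, to pass from $G_e$ to the whole of $G$: under the standing assumption that $G/K$ is connected (implicit in the hypotheses of Theorem \ref{spherical-diff}), one has $G=KG_e$. I would apply the same Taylor-plus-pairing scheme to the $\textup{End}(V_\tau)$-valued eigenfunction $\Phi_1-\Phi_2$, where $\Phi_i=d_\tau^2S_\tau\inv\phi_i$, now using the bijection $\Lambda$ of Theorem \ref{Lambda} from $(\fS(\fp)\otimes\textup{End}(V_\tau))^K$ onto $\bD(E_\tau)$ and pulling back along the restriction of $\exp$ to $\fp$, which yields $\Phi_1=\Phi_2$ on $G_e$; the bi-$\tau$-equivariance $\Phi_i(kg)=\Phi_i(g)\tau(k\inv)$ (a special case of \eqref{def of Hom(V)-valued algebra}) then extends the equality to all of $G$, and $\phi_1=d_\tau\tr\Phi_1=d_\tau\tr\Phi_2=\phi_2$.
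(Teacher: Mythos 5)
Your argument is correct, but it takes a more roundabout route than the paper at the one step that actually matters. Both proofs reduce the theorem to the claim that a $K$-central analytic function is determined by the values $D\phi(e)$, $D\in\bD_K(G)$, and both ultimately exploit averaging over $K$; the difference is where the averaging happens. The paper averages the \emph{operators}: for arbitrary $D\in\bD(G)$ it sets $D_0=\int_K D^{\textup{Ad}k}\,dk\in\bD_K(G)$ and observes that $K$-centrality of $\phi_i$ gives $D_0\phi_i(e)=D\phi_i(e)$, so that \emph{all} left-invariant derivatives at $e$ agree and analyticity finishes the job in one line. You instead expand $\tilde\psi=\psi\circ\exp$ into $\textup{Ad}(K)$-invariant homogeneous Taylor components and prove non-degeneracy of the pairing $\lan p,q\ran=p(\de)q(0)$ on $I_m(\fg)$ by averaging the dual partner over $K$ — a correct but heavier piece of machinery that re-derives, at the level of $\fS(\fg)$, exactly what the operator-averaging trick gives for free (your identity $(\la(p)\psi)(e)=p(\de)\tilde\psi_m(0)$ is the bridge between the two formulations). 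On the other hand, your treatment of the passage from $G_e$ to $G$ is more scrupulous than the paper's: the published proof simply asserts that analyticity forces agreement ``on the whole of $G$'', which literally only yields the identity component, whereas you correctly invoke $G=KG_e$ (valid when $G/K$ is connected, the standing hypothesis of Theorem \ref{spherical-diff}) together with the one-sided equivariance of the matrix-valued lifts $\Phi_i$ to propagate the equality across components. So: same skeleton, a bulkier but sound middle step, and a cleaner endgame.
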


\begin{proof}
By the given conditions we have, $D\phi_1(e)=D\phi_2(e)$ for all $D\in\mathbb{D}_K(G)$. Now let $D\in\mathbb{D}(G)$. Define $D_0=\int_KD^{\textup{Ad}k}dk$ which clearly belongs to $\mathbb{D}_K(G)$. Since $\phi_i$ are $K$-central, it follows that $D_0\phi_i(e)=D\phi_i(e)$. Therefore $D\phi_1(e)=D\phi_2(e)$ for all $D\in\mathbb{D}(G)$. But $\phi_1,\phi_2$ being analytic, they must coincide on whole of $G$.
\end{proof}

Let $\Sigma_\tau$ be the set of all trace $\tau$-spherical function. 
By Corollary \ref{finite basis}, we can fix  a finite set $\mathcal{D}=(D_1,\cdots,D_k)$ of generators of $\mathbb{D}_{K,\tau}(G)$. Then the map $\rho_{\mathcal{D}}$ which assigns to a $\phi\in\Sigma_\tau$ the $k$-tuple $$\rho_{\mathcal{D}}(\phi)=\big(\lambda_{D_1}(\phi),\cdots,\lambda_{D_k}(\phi)\big)\in\mathbb{C}^k$$  
of its eigenvalues
is injective, by Proposition \ref{spherical function uniquely determined by eigenvalues}. Assume that $\Sigma_\tau$ is endowed with the weak*-topology. 
\begin{theorem}
On $\Sigma_\tau$, the weak*-topology and the compact-open topology coincide. The map $\rho_{\mathcal{D}}$ is a homeomorphism from $\Sigma_\tau$ to its image in $\mathbb{C}^k$, and $\rho_{\mathcal{D}}(\Sigma_\tau)$ is closed.

\end{theorem}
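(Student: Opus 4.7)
The three assertions will be proved together. Injectivity of $\rho_{\mathcal{D}}$ is Theorem~\ref{spherical function uniquely determined by eigenvalues}; the content lies in the topology and closedness statements.

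\emph{Topology equivalence.} The implication compact-open $\Rightarrow$ weak* uses dominated convergence: for compactly supported $f \in L^1_\tau(G)^{\textup{int}K}$, $\widehat f(\phi_\alpha) \to \widehat f(\phi)$ is immediate; for general $f$, one uses $L^1$-density together with uniform $L^\infty$ control on the net $\{\phi_\alpha\}$, which the functional equation \eqref{functional equation for phi} yields locally from $\phi(e)=1$. For the converse, I would integrate \eqref{functional equation for phi} against a $K$-central test function $g \in \cD(G)^{\textup{int}K}$ to obtain the reproducing identity
\begin{equation}\label{rep-id}
\phi*g(x) \;=\; C(\phi,g)\,\phi(x),\qquad C(\phi,g):=\int_G g(y)\phi(y)\,dy,
\end{equation}
after rewriting $\phi*g(x)$ as the pairing of $\phi$ with a function $\hat g_x\in L^1_\tau(G)^{\textup{int}K}$ depending continuously on $x$. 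Choosing $g$ so that $C(\phi_0,g)\ne 0$, weak* convergence $\phi_\alpha\to\phi_0$ gives uniform convergence of $\phi_\alpha*g$ on compacta in $x$, and hence compact-open convergence of $\phi_\alpha = (\phi_\alpha*g)/C(\phi_\alpha,g)$.

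\emph{Continuity of $\rho_{\mathcal{D}}$.} Since $\lambda_{D_j}(\phi) = (D_j\phi)(e)$ by Theorem~\ref{spherical-diff}, differentiating \eqref{rep-id} at $x=e$ and integrating by parts produces
$$
\lambda_{D_j}(\phi) \;=\; \frac{C(\phi, D_j^t g)}{C(\phi,g)},
$$
a ratio of weak*-continuous functionals on the weak*-open set $\{C(\cdot,g)\ne 0\}$. Combined with injectivity, this makes $\rho_{\mathcal{D}}$ a topological embedding.

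\emph{Closedness.} Given $\rho_{\mathcal{D}}(\phi_\alpha)\to z\in\bC^k$, averaging the symmetrized sum of squares of a basis of $\fg$ over $\textup{Ad}(K)$ yields an elliptic $\Delta\in\bD_K(G)$; by Corollary~\ref{finite basis}, its class in $\bD_{K,\tau}(G)$ is a polynomial $P(D_1,\dots,D_k)$, so that $\Delta\phi_\alpha = P(\rho_{\mathcal{D}}(\phi_\alpha))\,\phi_\alpha$. Boundedness of this eigenvalue together with the normalization $\phi_\alpha(e)=1$ furnish, through elliptic regularity, local equiboundedness and equicontinuity of $\{\phi_\alpha\}$; Arzel\`a--Ascoli then produces a subsequential limit $\phi$ on compacta. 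The limit $\phi$ is $K$-central, of type~$\tau$, analytic, satisfies $\phi(e)=1$ and $D_j\phi=z_j\phi$, hence by Theorem~\ref{spherical-diff} lies in $\Sigma_\tau$ with $\rho_{\mathcal{D}}(\phi)=z$.

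The main obstacle is the closedness step: turning boundedness of the finitely many invariants $\lambda_{D_j}(\phi_\alpha)$ into genuine analytic compactness of the family $\{\phi_\alpha\}$. This hinges on Corollary~\ref{finite basis}: once $\bD_{K,\tau}(G)$ is known to be finitely generated, the eigenvalue of any $K$-invariant elliptic operator is a polynomial in the coordinates of $\rho_{\mathcal{D}}$, and therefore bounded along the net.
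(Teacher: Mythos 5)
Your proposal is correct and follows essentially the same route as the paper: the paper constructs the $\textup{Ad}(K)$-invariant elliptic Laplacian $\Delta=\lambda(|t|^2)\in\bD_K(G)$ and then defers to \cite[Thm.~10 and Cor.~11]{F}, whose arguments (the reproducing identity $\phi*g=C(\phi,g)\phi$ for the topology comparison and continuity of the eigenvalue functionals, and boundedness of $\lambda_\Delta$ via finite generation plus elliptic regularity and Arzel\`a--Ascoli for closedness) are exactly what you have written out. The only small imprecision is attributing the uniform $L^\infty$ bound to the functional equation; it comes more directly from the fact that nonzero multiplicative functionals on $L^1_\tau(G)^{\textup{int}K}$ have norm at most $1$, which yields $\|\phi\|_\infty\le 1$ after averaging.
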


\begin{proof}
Introducing coordinates $t=(t_1,\cdots,t_n)$ on $\frak{g}$ according to the $\exp$ map, let $|t|$ an $\textup{Ad}(K)$-invariant norm on $\frak{g}$. The operator $\triangle:=\lambda(|t|^2)$ is in $\mathbb{D}_K(G)$ and is elliptic (a Laplacian). Being an eigenfunction of $\triangle$, any $\phi\in \Sigma_\tau$ is real-analytic.
The rest of the proof goes as in \cite[Thm. 10 and Cor. 11]{F}. 
\end{proof}

\vskip.5cm

\section{Strong Gelfand pairs}\label{sect:strong}
\vskip.5cm

Given a pair $(G,K)$, with $K$ a compact subgroup of $G$, there may be several $\tau\in\hat K$ such that $(G,K,\tau)$ is a commutative triple. We have already seen that if a nontrivial $\tau$ gives a commutative triple and $G/K$ is connected, then also the trivial representation does.

The extreme situation occurs when $(G,K,\tau)$ is a commutative triple for all $\tau\in\hat K$. When this occurs, one says that $(G,K)$ is a {\it strong Gelfand pair}.

\begin{lemma}\label{strong}
The following are equivalent:
\begin{itemize}
\item[\rm(i)] $(G,K)$ is a strong Gelfand pair,
\item[\rm(ii)] $(K\ltimes G,K)$ is a Gelfand pair, with $K$ acting on $G$ by inner automorphisms,
\item[\rm(iii)] $L^1(G)^{\textup{int}K}$ is commutative,
\item[\rm(iv)] (for $G$  connected) $\mathbb{D}_K(G)$ is commutative.
\end{itemize}
\end{lemma}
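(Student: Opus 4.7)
The plan is to prove (i)$\Leftrightarrow$(iii)$\Leftrightarrow$(ii) without the connectedness assumption, and (iv)$\Leftrightarrow$(i) under $G$ connected (so that $G/K$ is connected and Theorem \ref{gelfand triple iff diff op commutative III} applies).

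For (i)$\Leftrightarrow$(iii), I would invoke the Peter--Weyl decomposition of $L^1(G)^{\textup{int}K}$ as the $L^1$-closure of $\bigoplus_{\tau\in\widehat K}L^1_\tau(G)^{\textup{int}K}$, with each summand the image of the idempotent $f\mapsto f*(d_\tau\bar\chi_\tau m_K)$. The crucial observation is that these isotypic subspaces mutually annihilate under convolution: for $K$-central $g$ the identity $g*(d_\tau\bar\chi_\tau m_K)=(d_\tau\bar\chi_\tau m_K)*g$ follows from $g(xk^{-1})=g(k^{-1}x)$, so for $f$ of type $\tau$ and $g$ of type $\tau'\neq\tau$ one gets $f*g=f*(d_\tau\bar\chi_\tau m_K)*(d_{\tau'}\bar\chi_{\tau'}m_K)*g=0$ by orthogonality of characters on $K$. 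Hence commutativity of $L^1(G)^{\textup{int}K}$ is equivalent to simultaneous commutativity of each piece $L^1_\tau(G)^{\textup{int}K}$, which via the algebra isomorphism $S_\tau$ of Section \ref{sect:triples} is commutativity of $L^1_{\tau,\tau}\big(G,\textup{End}(V_\tau)\big)$ for every $\tau$, i.e., condition (i).

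For (ii)$\Leftrightarrow$(iii), I would identify the bi-$K^\sharp$-invariant integrable functions on $K\ltimes G$ (with $K^\sharp=\{(k,e):k\in K\}$) with the $K$-central integrable functions on $G$: bi-invariance on one side kills dependence on the $K$-factor, while on the other side it forces $K$-centrality in the $G$-factor. A direct product-rule computation on $K\ltimes G$, of the same flavour as the lifting used in the proof of Theorem \ref{spherical-diff}, shows that convolution on $K\ltimes G$ restricted to these functions coincides with convolution on $G$, so the two algebras are naturally isomorphic.

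Finally, for (iv)$\Leftrightarrow$(i) with $G$ connected, I would appeal to Theorem \ref{gelfand triple iff diff op commutative III}, by which $(G,K,\tau)$ is commutative iff $\mathbb{D}_{K,\tau}(G)=\mathbb{D}_K(G)/\ker A_\tau$ is commutative. Commutativity of $\mathbb{D}_K(G)$ forces each quotient to be commutative, yielding (i). Conversely, assuming (i), each $\mathbb{D}_{K,\tau}(G)$ is commutative and I would prove $\bigcap_{\tau\in\widehat K}\ker A_\tau=\{0\}$, so that $\mathbb{D}_K(G)$ embeds in $\prod_\tau\mathbb{D}_{K,\tau}(G)$ and inherits commutativity. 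If $D$ lies in this intersection, $D$ annihilates every $C^\infty_\tau(G)^{\textup{int}K}$; since $D$ is left-invariant it commutes with right convolution by $d_\tau\bar\chi_\tau m_K$, so $D$ kills every $\tau$-component of every $K$-central smooth function, hence all of $C^\infty(G)^{\textup{int}K}$. By $\textup{Ad}(K)$-invariance of $D$, the $K$-average $f^K(g)=\int_K f(kgk^{-1})\,dk$ of any $f\in C^\infty(G)$ satisfies $D(f^K)=(Df)^K=0$; evaluation at $g=e$ gives $Df(e)=0$ for every $f$, and left-invariance of $D$ then forces $D=0$. The main technical obstacle throughout is keeping track of left versus right convolutions and the role of $K$-centrality; everything else is routine Peter--Weyl.
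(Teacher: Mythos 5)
Your proof is correct and follows essentially the same route as the paper: the paper's entire argument consists of the two identities $L^1(G)^{\textup{int}K}=\sum_{\tau}L^1_\tau(G)^{\textup{int}K}$ and $L^1_\tau(G)^{\textup{int}K}*L^1_\sigma(G)^{\textup{int}K}=\{0\}$ for $\tau\not\sim\sigma$, which is exactly your isotypic decomposition with mutual annihilation. Your write-up merely supplies the details the paper leaves to the reader (the identification of bi-$K^\sharp$-invariant functions on $K\ltimes G$ with $K$-central functions on $G$, and the verification that $\bigcap_\tau\ker A_\tau=\{0\}$ for part (iv)), all of which check out.
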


This follows easily from the identities
$$
L^1(G)^{\textup{int}K}=\sum_{\tau\in \widehat K}L^1_\tau(G)^{\textup{int}K}\ ,\qquad L^1_\tau(G)^{\textup{int}K}*L^1_\sigma(G)^{\textup{int}K} =\{0\}\ ,\quad \tau\not\sim\sigma\ .
$$

\begin{definition}
Let $(G,K)$ be a strong Gelfand pair. A non zero $\phi\in L^\infty(G)^{\textup{int}K}$ is said to be a $L^1(G)^{\textup{int}K}$-spherical function if the map $$f\rightarrow \int_G f(g)\phi(g^{-1})dg$$ is an an algebra homomorphism of $L^1(G)^{\textup{int}K}$ onto $\mathbb{C}$. 
\end{definition}

The following statement follows directly from (ii) in Lemma \ref{strong}.

\begin{theorem}\label{strong GP equi cond}
Let $(G,K)$ be a strong Gelfand pair. The following are equivalent: 
\begin{itemize}
\item[\rm(i)] $\phi$ is a $L^1(G)^{\textup{int}K}$-spherical function,
\item[\rm(ii)] $\phi$ is nonzero, bounded and satisfies the functional equation 
\begin{eqnarray}\label{strong functional equation}
\int_K\phi(xkyk^{-1})dk=\phi(x)\phi(y),
\end{eqnarray}
\item[\rm(iii)] (if $G$ is connected) $\phi(e)=1$, $\phi$ is $K$-central, analytic and is a joint eigenfunction of all $D\in\mathbb{D}_K(G)$.    
\end{itemize}
\end{theorem}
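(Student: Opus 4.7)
The plan is to reduce the statement to the classical characterization of spherical functions for the genuine Gelfand pair $(K\ltimes G, K^\sharp)$, with $K^\sharp=\textup{diag}(K)$, provided by Lemma~\ref{strong}(ii). Concretely, I would begin by setting up the lifting map $\phi\mapsto\phi^\sharp$ where $\phi^\sharp(k,g):=\phi(g)$. Using the multiplication rule in $K\ltimes G$, one checks that $\phi^\sharp$ is bi-$K^\sharp$-invariant if and only if $\phi$ is $K$-central, and that this map is an isometric algebra isomorphism of $L^1(G)^{\textup{int}K}$ onto the convolution algebra of bi-$K^\sharp$-invariant $L^1$-functions on $K\ltimes G$. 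In particular, the $L^1(G)^{\textup{int}K}$-spherical functions are in bijection, via the lifting, with the bi-$K^\sharp$-invariant spherical functions of the Gelfand pair $(K\ltimes G,K^\sharp)$.

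For the equivalence (i)$\Leftrightarrow$(ii), I would translate the standard spherical functional equation $\int_{K^\sharp}\phi^\sharp(\eta k\eta')\,dk=\phi^\sharp(\eta)\phi^\sharp(\eta')$ under this correspondence. A direct computation yields $(e,x)(k,e)(e,y)=(k,xkyk^{-1})$ in $K\ltimes G$, so the Gelfand-pair equation becomes exactly \eqref{strong functional equation}. Combined with the classical theorem that a bounded nonzero bi-$K^\sharp$-invariant function on $K\ltimes G$ is a spherical function iff it satisfies this functional equation (e.g.\ \cite[Ch.~IV]{H}), this gives (i)$\Leftrightarrow$(ii).

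For the equivalence with (iii) under the connectedness assumption, I would observe that if $G$ is connected then so is $K\ltimes G$, hence $(K\ltimes G)/K^\sharp$ is connected, so the general equivalence between spherical functions and joint analytic eigenfunctions of the invariant differential operators on the quotient applies (this is the content of Theorem~\ref{spherical-diff} in the trivial-$\tau$ case, or directly \cite[Ch.~IV, Prop.~2.2]{H}). The remaining step is to identify the algebra $\mathbb{D}\bigl((K\ltimes G)/K^\sharp\bigr)$ of $(K\ltimes G)$-invariant differential operators on the quotient with $\mathbb{D}_K(G)$: a bi-$K^\sharp$-invariant function on $K\ltimes G$ is determined by its restriction to $\{e\}\times G$, which is a $K$-central function on $G$, and under this identification the left-invariance by $\{e\}\times G$ corresponds to left-invariance on $G$ while left-invariance by $K^\sharp$ corresponds to $\textup{Ad}(K)$-invariance. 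Analyticity follows, as in the proof of Theorem~\ref{spherical-diff}, from the existence of an elliptic left-invariant $\textup{Ad}(K)$-invariant Laplacian.

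The main technical obstacle is the last identification: matching the $(K\ltimes G)$-invariant differential operators on the quotient with $\mathbb{D}_K(G)$ and checking that the correspondence of eigenvalue equations is preserved. This is essentially bookkeeping with the semidirect-product structure, but it is the one place where care is needed to avoid confusing the two copies of $K$ appearing in $K\ltimes G$.
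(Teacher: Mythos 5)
Your proposal is correct and follows essentially the same route as the paper: the paper's proof consists of the single remark that the theorem ``follows directly from (ii) in Lemma~\ref{strong}'', i.e.\ precisely the reduction to the Gelfand pair $(K\ltimes G,K)$ via the lift $\phi\mapsto\phi^\sharp$, the coset computation $(e,x)(k,e)(e,y)=(k,xkyk^{-1})$, and the classical Helgason-type characterization, all of which you spell out (and which the paper itself carries out in the proof of Theorem~\ref{spherical-diff}). Your writeup merely makes explicit the details the paper leaves to the reader, including the identification of the invariant differential operators on the quotient with $\mathbb{D}_K(G)$.
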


If $\phi$ is a $L^1(G)^{\textup{int}K}$-spherical function, then, by the functional equation \eqref{strong functional equation}, $\phi(e)=1\neq 0$. Therefore, in view of Theorem \ref{strong GP equi cond} (iii) and Proposition \ref{spherical-diff}, we have the following proposition.

\begin{proposition}
Let $(G,K)$ be a strong Gelfand pair. Let $\phi$ be a $L^1(G)^{\textup{int}K}$-spherical function. Then there is unique $\tau\in\widehat{K}$ such that $\phi\in L^\infty(G)_\tau^{\textup{int}K}$. Hence $\phi$ is an trace $\tau$-spherical function. 
\end{proposition}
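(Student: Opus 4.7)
The plan is to use the functional equation \eqref{strong functional equation} to isolate a single $\tau\in\widehat K$ from the restriction $\psi := \phi|_K$, then to upgrade the resulting $K$-level identity to the type condition on all of $G$. By Theorem~\ref{strong GP equi cond}(iii), $\phi$ is $K$-central and analytic, so $\psi$ is a continuous class function on $K$. Taking $y=e$ in \eqref{strong functional equation} yields $\phi(x)=\phi(x)\phi(e)$ for all $x$; non-triviality of $\phi$ then forces $\phi(e)=\psi(e)=1$.

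Next, restricting \eqref{strong functional equation} to $x,y\in K$ yields
\[
\int_K \psi(k_1 k k_2 k^{-1})\,dk = \psi(k_1)\psi(k_2).
\]
I expand $\psi = \sum_{\sigma\in\widehat K} a_\sigma \bar\chi_\sigma$ in Peter--Weyl and use Schur's lemma ($\int_K \sigma(k)\sigma(k_2)\sigma(k^{-1})\,dk = \chi_\sigma(k_2) d_\sigma^{-1} I_\sigma$) to evaluate
\[
\int_K \bar\chi_\sigma(k_1 k k_2 k^{-1})\,dk = \bar\chi_\sigma(k_1)\bar\chi_\sigma(k_2)/d_\sigma.
\]
Comparing coefficients in the linearly independent family $\{\bar\chi_\sigma\otimes\bar\chi_{\sigma'}\}$ on $K\times K$ collapses the identity to $a_\sigma a_{\sigma'} = \delta_{\sigma\sigma'}\,a_\sigma/d_\sigma$, forcing at most one $a_\tau$ to be non-zero with value $1/d_\tau$; the normalization $\psi(e)=1$ then forces exactly one such $\tau$ to appear. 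Thus $\phi|_K = \bar\chi_\tau/d_\tau$ for a unique $\tau\in\widehat K$.

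To extend this identification from $K$ to $G$, I multiply \eqref{strong functional equation} by $\chi_\tau(y)$ and integrate over $y\in K$. The right-hand side collapses to $\phi(x)/d_\tau$ by the orthogonality $\int_K\chi_\tau\bar\chi_\tau = 1$. The left-hand side, after Fubini and the substitution $h=kyk^{-1}$ (using that $\chi_\tau$ is a class function), simplifies to $\int_K\chi_\tau(h)\phi(xh)\,dh$. Hence $d_\tau\int_K\phi(xk)\chi_\tau(k)\,dk = \phi(x)$, so $\phi\in L^\infty_\tau(G)^{\textup{int}K}$. Since $\phi$ is non-trivial, satisfies the functional equation \eqref{functional equation for phi} (identical to \eqref{strong functional equation}), and lies in $L^\infty_\tau(G)^{\textup{int}K}$, Theorem~\ref{characterization of spherical function in terms of functional equation-1} identifies it as a trace $\tau$-spherical function.

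The main technical obstacle is the passage from ``$\phi|_K$ is a single normalized character'' to ``$\phi$ itself lies in $L^\infty_\tau$'': this is a spectral statement about the full right-regular action of $K$ on $\phi$ and requires the global functional equation, not merely its restriction to $K$. Some care with the Peter--Weyl convention ($\chi_\sigma$ versus $\bar\chi_\sigma$) is needed so that the label $\tau$ obtained from the expansion of $\psi$ matches the paper's definition of $L^\infty_\tau$.
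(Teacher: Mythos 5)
Your proof is correct, but it takes a genuinely different route from the paper's. The paper's argument is shorter and purely global: since $\phi\ne0$ there is some $\tau$ with $\phi_\tau:=\phi*(d_\tau\bar\chi_\tau m_K)\ne0$; from \eqref{strong functional equation} one extracts the symmetry $\phi_\tau(x)\phi(y)=\phi(x)\phi_\tau(y)$ for all $x,y\in G$ (by replacing $y$ with $yk'$, resp.\ $x$ with $xk'$, and averaging against $d_\tau\chi_\tau(k')\,dk'$), and setting $y=e$ gives $\phi_\tau=\phi_\tau(e)\phi$ with $\phi_\tau(e)\ne0$, whence $\phi=\phi_\tau\in L^\infty_\tau(G)^{\textup{int}K}$. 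You instead first restrict the functional equation to $K\times K$ and run the classical Peter--Weyl/Schur argument (in effect, the classification of spherical functions of the compact pair $(K\times K,\textup{diag}\,K)$) to identify $\phi|_K=\bar\chi_\tau/d_\tau$ for a unique $\tau$, and then integrate the global functional equation against $\chi_\tau(y)\,dy$ over $K$ to upgrade this to the type-$\tau$ condition on all of $G$; that second step is the crucial one and your computation of it is correct. Your route buys the explicit restriction formula $\phi|_K=\bar\chi_\tau/d_\tau$ as a byproduct, at the cost of an extra stage and of one small point you should make explicit: the character expansion of $\psi$ converges only in $L^2(K)$, so the termwise integration leading to $a_\sigma a_{\sigma'}=\delta_{\sigma\sigma'}a_\sigma/d_\sigma$ should be justified, e.g.\ by computing the Fourier coefficients of both sides of the restricted functional equation directly on $K\times K$ (both sides are continuous, hence in $L^2$). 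The implicit uniqueness argument (only one character occurs in $\phi|_K$, while membership in $L^\infty_\sigma(G)^{\textup{int}K}$ evaluated at $x=e$ forces $d_\sigma\int_K\phi\,\chi_\sigma\,dk=1$) and the final appeal to Theorem \ref{characterization of spherical function in terms of functional equation-1} are both fine.
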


\begin{proof}
Since $\phi$ is nonzero, there is $\tau\in \hat K$ such that $\phi_\tau:=\phi*(d_\tau\bar\chi_\tau m_K)$ is nonzero.
By \eqref{strong functional equation}, we obtain that, for all $x,y\in G$,
$$
\phi_\tau(x)\phi(y)=\phi(x)\phi_\tau(y)\ .
$$

With $y=e$, this gives that $\phi_\tau=\phi_\tau(e)\phi$. Hence $\phi_\tau(e)\ne0$, $\phi\in L^\infty(G)_\tau^{\textup{int}K}$, and finally $\phi=\phi_\tau$.
\end{proof}

\begin{corollary}
Let $\Sigma$ denote the spectrum of the Gelfand pair $(K\ltimes G,K)$. Then $\Sigma$ is the disjoint union of the spectra $\Sigma_\tau$ of the triples $(G,K,\tau)$, $\tau\in\hat K$. Each $\Sigma_\tau$ is open and closed in $\Sigma$.
\end{corollary}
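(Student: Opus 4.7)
The plan is to prove the set-theoretic decomposition $\Sigma = \bigsqcup_\tau \Sigma_\tau$, to verify that the topology on $\Sigma$ restricts to the topology on each $\Sigma_\tau$, and finally to exhibit, for each $\tau \in \widehat K$, a continuous function on $\Sigma$ that takes only the values $0$ and $1$ and equals $1$ precisely on $\Sigma_\tau$.

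For the decomposition, the preceding Proposition asserts that every $\phi \in \Sigma$ lies in $\Sigma_\tau$ for a unique $\tau \in \widehat K$, so $\Sigma \subseteq \bigsqcup_\tau \Sigma_\tau$ with the $\Sigma_\tau$ pairwise disjoint. For the reverse inclusion, I would use the lifting procedure from the proof of Theorem \ref{spherical-diff}: $K$-central functions on $G$ correspond bijectively to bi-$K^\sharp$-invariant functions on $K \ltimes G$ (with $K^\sharp = \mathrm{diag}(K)$), and under this bijection the functional equation \eqref{functional equation for phi} translates into the spherical-function equation for the Gelfand pair $(K\ltimes G, K)$. Hence every trace $\tau$-spherical function lifts to an element of $\Sigma$, giving the reverse inclusion. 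Since this lifting is a homeomorphism for the compact-open topologies, the topology induced on each $\Sigma_\tau$ from $\Sigma$ agrees with the one introduced in Section \ref{sect:embeddings}.

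For the clopen claim, the key is to define
$$
c_\tau : \Sigma \to \mathbb{C}, \qquad c_\tau(\phi) := d_\tau \int_K \phi(k)\,\chi_\tau(k)\, dk,
$$
which coincides with $\phi_\tau(e)$ for $\phi_\tau := \phi * (d_\tau \bar\chi_\tau m_K)$. Because $K$ is compact, $c_\tau$ is continuous in the compact-open topology. The proof of the preceding Proposition showed that $\phi_\tau = c_\tau(\phi)\,\phi$ identically, which forces $c_\tau(\phi) \in \{0,1\}$ for every $\phi \in \Sigma$, with value $1$ iff $\phi \in \Sigma_\tau$. Therefore $\Sigma_\tau = c_\tau^{-1}(\{1\})$ is simultaneously open and closed in $\Sigma$.

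I do not foresee a substantive obstacle; the only point to handle carefully is the identification between bounded spherical functions of $(K\ltimes G, K)$ and trace $\tau$-spherical functions on $G$, but this is essentially built into the proof of Theorem \ref{spherical-diff} and into Lemma \ref{strong}, so no new argument is required.
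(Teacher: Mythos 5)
Your proposal is correct and follows essentially the same route as the paper: the paper's proof consists precisely of your clopen argument, using the continuous function $\rho_\tau(\phi)=\phi*(d_\tau\bar\chi_\tau m_K)(e)$ (your $c_\tau$), which takes only the values $0$ and $1$ by the preceding Proposition, so that $\Sigma_\tau=\rho_\tau^{-1}(1)$ is both open and closed. The extra care you take over the set-theoretic decomposition and the reverse inclusion via lifting is sound but is left implicit in the paper (it follows from the preceding Proposition together with the orthogonality $L^1_\tau(G)^{\textup{int}K}*L^1_\sigma(G)^{\textup{int}K}=\{0\}$ for $\tau\not\sim\sigma$).
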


\begin{proof}
For each $\tau\in \hat K$, the map $\rho_\tau$ which assigns to a bounded spherical function $\phi\in\Sigma$ the value
$$
\rho_\tau(\phi)=\phi*(d_\tau\bar\chi_\tau m_K)(e)=\int_K\phi(k)\chi_\tau(k)dk\ ,
$$
is continuous on $\Sigma$ and only takes values 0 or 1. This implies that $\Sigma_\tau=\rho_\tau\inv(1)$ is closed. On the other hand, if a sequence of functions $\ph_n\in \Sigma$ converges to $\phi\in\Sigma_\tau$, then $\rho_\tau(\phi_n)$ must be eventually be equal to 1. Then $\phi_n\in\Sigma_\tau$ eventually. This proves that $\Sigma_\tau$ is open.
\end{proof}

\vskip.5cm

\section{$K$-homogeneous bundles over a Lie group $H$}\label{sect:bundles}
\vskip.5cm

In this section we consider the special case where $G=K\ltimes H$, $H$ being  a Lie group and $K$ a compact group of automorphisms of $H$, and $(\tau,V_\tau)$ is an irreducible unitary representation of $K$. We denote by $k\cdot x$ the action of $k\in K$ on $x\in H$. The product on $K\ltimes H$ is given by
$$
(k,x)(k',x')=\big(kk',x(k\cdot x')\big).
$$

The sections of $E_\tau$, i.e., the $V_\tau$-valued functions $u$ on $G$ satisfying the identity $u(gk)=\tau(k)\inv u(g)$, are naturally identified with $V_\tau$-valued functions $u_0$ on $H$, via the map $T$ given by
\begin{equation}\label{fromHtoG}
T:\quad u_0(x)\longmapsto  u(k,x)=\tau(k)\inv u_0(x).
\end{equation}

The action of  $H$ on $u_0$ is given by left translations and that of an element $k\in K$ by
\begin{equation}\label{KonHsections}
k:\quad u_0(x)\longmapsto \tau(k)u_0(k\inv\cdot x).
\end{equation}

Similarly, the integral operators on $V_\tau$-valued functions on $H$ commuting with the action of $G$ are given, in analogy with \eqref{Schwartz-kernel-thm}, by
$$
u(x)\longmapsto \int_HF(y\inv x)u(y)dy,
$$
where $F:H\rightarrow \textup{End}(V_\tau)$ satisfies the identity 
\begin{eqnarray}\label{eqn tau type function} 
F(k\cdot x)=\tau(k)F(x)\tau(k^{-1}).
\end{eqnarray} 

This is equivalent to saying that 
$$
TF(k,x)=\tau(k)\inv F(x) \in L^1_{\tau,\tau}(G).
$$

Composition of integral operators corresponds to convolution of $F_1,F_2\in L^1_\tau(H)$, defined as 
$$
F_1*F_2(x)=\int_H F_2(y^{-1}x)F_1(y)dy.
$$ 
Then, under this convolution, $L^1_\tau(H)$ becomes an algebra.  

Therefore $(K\ltimes H,K,\tau)$ is a commutative triple iff $L^1_\tau(H)$ is commutative. In this case $H$ is unimodular.

As for Gelfand pairs, here too we have a reformulation of Theorem \ref{representation theoretic criteria-general case}. Let $\widehat{H}$ be the dual object of $H$, i.e., the set of equivalence classes $[\pi]$ of irreducible unitary representations $\pi$ of $H$. The group $K$ acts on $\widehat {H}$ in the following way: given $k\in K$ and $\pi$ irreducible and unitary, $\pi^k(x)=\pi(k^{-1}\cdot x)$ defines an irreducible and unitary 
representation of $H$, which may or may not be equivalent to $\pi$. Note that if $\pi_1\sim\pi_2$ then $\pi_1^k\sim\pi_2^k$. So we can set $k\cdot [\pi]=[\pi^k]$. 

Let $K_\pi$ be the stabilizer of $[\pi]$, which is clearly compact. For $k\in K_\pi$, there exists a (unique up to a unitary factor) unitary operator $\delta(k)$ on $\cH_\pi$ (the Hilbert space where $\pi$ is realized) which intertwines $\pi$ with $\pi^k$, i.e., $\pi^k(x)=\delta(k^{-1})\pi(x)\delta(k)$ for all $x\in H$. This defines a projective unitary 
representation $\delta$ of $K_\pi$ on $\cH_\pi$. Consider the representation $\delta\otimes (\tau\mid_{K_\pi})$ of $K_\pi$  on $\cH_\pi\otimes V_\tau$. Since $K_\pi$ is compact, $\delta\otimes(\tau\mid_{K_\pi})$ is completely reducible. Then we have the following theorem giving a characterization for $(G,K,\tau)$ to be a commutative triple, in analogy with \cite{Car}.

\begin{theorem}\label{Gelfand pair iff multiplicity free}
$(K\ltimes H,K,\tau)$ is a commutative triple iff for each $\pi\in\widehat{H}$, $\delta\otimes(\tau\mid_{K_\pi})$ is multiplicity free.
\end{theorem}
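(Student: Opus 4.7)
My plan is to reduce the claim to the general representation-theoretic criterion of Theorem \ref{representation theoretic criteria-general case} by making the Mackey machine for the semidirect product $G=K\ltimes H$ explicit. That criterion says $(K\ltimes H,K,\tau)$ is commutative iff every $\Pi\in\widehat G$ contains $\tau$ with multiplicity at most $1$ on restriction to $K$, so the task is to express this bound in terms of the data $(\pi,K_\pi,\delta)$ attached to $\widehat H$.

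First I would invoke the Mackey--Wigner classification: since $H\trianglelefteq G$ has compact quotient $K$, the irreducibles of $G$ are parameterized by pairs $([\pi],\sigma)$, where $[\pi]$ is a $K$-orbit in $\widehat H$ with representative $\pi$, and $\sigma$ is an irreducible projective unitary representation of $K_\pi$ whose multiplier is inverse to that of $\delta$. The corresponding irreducible is
$$
\Pi_{\pi,\sigma}=\textup{Ind}_{K_\pi\ltimes H}^G(\widetilde\pi\otimes\sigma),\qquad \widetilde\pi(k,x)=\pi(x)\delta(k),
$$
where the two projective cocycles cancel, so that $\widetilde\pi\otimes\sigma$ is a genuine unitary representation of $K_\pi\ltimes H$.

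Next I would compute the restriction to $K$. Since $G=K\cdot(K_\pi\ltimes H)$ is a single double coset and $K\cap(K_\pi\ltimes H)=K_\pi$, Mackey's subgroup theorem yields
$$
\Pi_{\pi,\sigma}\big|_K\;\cong\;\textup{Ind}_{K_\pi}^K\big((\widetilde\pi\otimes\sigma)|_{K_\pi}\big)\;=\;\textup{Ind}_{K_\pi}^K(\delta\otimes\sigma).
$$
Applying Frobenius reciprocity---interpreted after lifting to a central extension of $K_\pi$ on which $\delta$, $\sigma$ and $\tau|_{K_\pi}$ are all honest representations---gives
$$
\langle \Pi_{\pi,\sigma}|_K,\tau\rangle_K\;=\;\langle\delta\otimes\sigma,\tau|_{K_\pi}\rangle_{K_\pi}\;=\;\langle\sigma,\delta^*\otimes\tau|_{K_\pi}\rangle_{K_\pi}.
$$
This last multiplicity is at most $1$ for every admissible irreducible $\sigma$ precisely when $\delta^*\otimes\tau|_{K_\pi}$ decomposes without multiplicity as a projective representation. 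Since replacing $\pi$ by its complex conjugate $\overline\pi$ is an involution on $\widehat H$ that preserves $K$-orbits and exchanges $\delta$ with $\delta^*$, the family of conditions ``$\delta^*\otimes\tau|_{K_\pi}$ multiplicity free for all $\pi$'' coincides with ``$\delta\otimes\tau|_{K_\pi}$ multiplicity free for all $\pi$'', which is the statement of the theorem.

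The delicate point throughout is bookkeeping the projective nature of the Mackey intertwiner $\delta$: the tensor product $\delta\otimes\sigma$ is a genuine representation while $\delta\otimes\tau|_{K_\pi}$ is projective, and multiplicity-freeness of the latter must be understood in the category of projective representations of $K_\pi$ sharing the cocycle of $\delta$. I would handle this uniformly by working on a central extension $\widetilde{K_\pi}\to K_\pi$ on which $\delta$ lifts to a genuine unitary representation, so that all the above Frobenius and Mackey arguments reduce to the standard Peter--Weyl framework and multiplicity-freeness becomes a choice-free notion.
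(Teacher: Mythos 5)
Your ``only if'' direction is essentially the paper's own argument: both proofs construct the irreducible representation of $K\ltimes H$ induced from $K_\pi\ltimes H$ (your $\Pi_{\pi,\sigma}$, the paper's $\textup{Ind}_{K_\pi\ltimes H}^{K\ltimes H}R$ with $R=\rho'\otimes\pi\delta$), restrict it to $K$ via the Mackey subgroup theorem using $G=K\cdot(K_\pi\ltimes H)$ and $K\cap(K_\pi\ltimes H)=K_\pi$, and finish with Frobenius reciprocity plus a contragredient twist (you conjugate $\pi$, the paper passes to $\tau'$; both are legitimate ways around the fact that $\delta^*\otimes(\tau\mid_{K_\pi})$ and $\delta\otimes(\tau\mid_{K_\pi})$ are not contragredient to one another). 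This half only uses that the induced representations are irreducible, so it is sound.

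The converse is where there is a genuine gap. You deduce commutativity from Theorem \ref{representation theoretic criteria-general case} by verifying the multiplicity bound on the representations $\Pi_{\pi,\sigma}$, and this is conclusive only if the Mackey--Wigner construction exhausts $\widehat{K\ltimes H}$. That exhaustiveness is a theorem only under extra hypotheses: $H$ must be type I (so that the restriction of an arbitrary $\Pi\in\widehat{G}$ to $H$ disintegrates canonically over $\widehat{H}$ and singles out a $K$-orbit) and the $K$-action on $\widehat{H}$ must be regular. The theorem here assumes only that $H$ is a Lie group; non-type-I Lie groups exist (the Mautner group, for instance), and for such $H$ your argument checks the hypothesis of Theorem \ref{representation theoretic criteria-general case} only on a subfamily of $\widehat{G}$. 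For the cases the paper uses later ($H=\bR^n$, $H=H_n$, nilpotent $H$) the gap is harmless, but it must at least be flagged. The paper's converse avoids the issue entirely: by Lemma \ref{pi(F) commutes with delta tensor tau}, $\pi(F)=\int_H\pi(x)\otimes F(x)\,dx$ lies in the commutant of $\delta\otimes(\tau\mid_{K_\pi})$ on $\cH_\pi\otimes V_\tau$ for every $F\in L^1_\tau(H)$; multiplicity-freeness makes that commutant abelian by Schur's lemma, so $\pi(F_1*F_2)=\pi(F_2*F_1)$ for every $\pi\in\widehat{H}$, and the fact that irreducible unitary representations of $H$ separate points of $L^1(H)$ gives $F_1*F_2=F_2*F_1$. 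Replacing your converse by this commutant argument (or adding the type I and regularity hypotheses needed for Mackey exhaustiveness) closes the gap; your handling of the projective bookkeeping via a central extension of $K_\pi$ is correct and is the same device the paper uses implicitly.
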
 

To prove the Theorem we need the following lemma. Let $\mathcal{L}(\cH_\pi)$ denote the set of all bounded operators on 
$\cH_\pi$. 
For $F
\in L^1(H)\otimes \textup{End}(V_\tau)$, 
define $\pi(F)\in\mathcal{L}(\cH_\pi)\otimes \textup{End}(V_\tau)\cong \mathcal{L}(\cH_\pi\otimes V_\tau)$ by 
$$
\pi(F)=\int_H\pi(x)\otimes F(x)dx.
$$ 

\begin{lemma}\label{pi(F) commutes with delta tensor tau}
Let $F\in L^1_\tau(H)$. Then $\pi(F)$ intertwines $\delta\otimes(\tau\mid_{K_\pi})$ with itself.
\end{lemma}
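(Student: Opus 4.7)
The goal is to show that for every $k\in K_\pi$,
\[
\bigl(\delta(k)\otimes \tau(k)\bigr)\,\pi(F)=\pi(F)\,\bigl(\delta(k)\otimes\tau(k)\bigr),
\]
where $\pi(F)=\int_H\pi(x)\otimes F(x)\,dx$. The plan is to expand both sides, perform a change of variables $x\mapsto k\cdot x$ in one of the resulting integrals, and then use the two defining identities available: the intertwining relation for $\delta$ (since $k\in K_\pi$) and the $\tau$-equivariance \eqref{eqn tau type function} of $F$.

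First I would write out the two sides as
\[
\bigl(\delta(k)\otimes \tau(k)\bigr)\pi(F)=\int_H \delta(k)\pi(x)\otimes \tau(k)F(x)\,dx,
\]
\[
\pi(F)\bigl(\delta(k)\otimes\tau(k)\bigr)=\int_H \pi(x)\delta(k)\otimes F(x)\tau(k)\,dx.
\]
In the second integral I substitute $x\mapsto k\cdot x$. Since $H$ is unimodular (as follows from Corollary \ref{triple->pair} once we know we are in a commutative triple, but here unimodularity is irrelevant if we note that the action of $K\subset \mathrm{Aut}(H)$ preserves Haar measure anyway, because $K$ is compact and $|\det\mathrm{Ad}(k)|$ is a continuous multiplicative homomorphism into $\mathbb R^+$, hence trivial), this substitution leaves $dx$ invariant.

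The key algebraic step is then to recognise the two terms appearing in the transformed integrand. From the defining relation $\pi(k^{-1}\cdot x)=\delta(k^{-1})\pi(x)\delta(k)$, substituting $x\mapsto k\cdot x$ gives $\pi(k\cdot x)=\delta(k)\pi(x)\delta(k^{-1})$, hence
\[
\pi(k\cdot x)\delta(k)=\delta(k)\pi(x).
\]
Simultaneously, \eqref{eqn tau type function} yields
\[
F(k\cdot x)\tau(k)=\tau(k)F(x)\tau(k^{-1})\tau(k)=\tau(k)F(x).
\]
Plugging these into the transformed integral reproduces exactly the right-hand side of the first expansion, which completes the argument.

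The only real subtlety, and thus the potential source of sign/side errors, is bookkeeping the order of multiplication in the intertwining relation for $\delta$ (which is projective on $K_\pi$), together with the fact that the $\pi$-factor and the $F$-factor transform on \emph{opposite} sides under the action of $K_\pi$; matching them up is precisely why $\pi(F)$ intertwines $\delta\otimes(\tau|_{K_\pi})$ with itself rather than with its contragredient.
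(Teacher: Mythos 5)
Your proof is correct and follows essentially the same route as the paper: both arguments reduce the claim to the two identities $\pi(k\cdot x)\delta(k)=\delta(k)\pi(x)$ and $F(k\cdot x)\tau(k)=\tau(k)F(x)$ combined with the change of variables $x\mapsto k\cdot x$ (the paper rewrites the integrand first and substitutes afterwards, which is immaterial). Your explicit justification that the substitution preserves Haar measure, via compactness of $K$, is a detail the paper leaves implicit.
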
 

\begin{proof}
We have, for $k\in K_\pi$,
\begin{eqnarray*}
 \pi(F)\big(\delta(k)\otimes\tau(k)\big)&=&\int_H\big(\pi(x)\delta(k)\big)\otimes\big(F(x)\tau(k)\big)dx\\
 &=&\int_H\big(\delta(k)\pi^k(x)\big)\otimes \big(\tau(k)F(k\inv\cdot x)\big)dx\\
 &=& \int_H\big(\delta(k)\pi(k\inv\cdot x)\big)\otimes \big(\tau(k)F(k\inv\cdot x)\big)dx\\
 &=& \big(\delta(k)\otimes\tau(k)\big) \pi(F).\qquad\qedhere
\end{eqnarray*} 
\end{proof}

Now we prove Theorem \ref{Gelfand pair iff multiplicity free}. For the proof of 'only if' part, we follow \cite{Car}, cf. also  \cite[Thm.3.5]{B} 

\begin{proof}[Proof of Theorem \ref{Gelfand pair iff multiplicity free}]
Let $(G,K,\tau)$ be a commutative triple and let $\pi\in\widehat{H}$. If  $\sigma$ is the multiplier of the associated projective representation $\delta$ of $K_\pi$, let $\widehat{K_\pi}^\sigma$ denote the set of equivalence classes of unitary irreducible projective representation of $K_\pi$ with multiplier $\sigma$. Then 
$$
\delta=\bigoplus_{\rho\in\widehat{K_\pi}^\sigma} c(\rho,\delta)\rho,
$$ 
where $c(\rho,\delta)$ is the multiplicity if $\rho$ in $\delta$. For $\rho\in\widehat{K_\pi}^\sigma$, let $\rho'$ be its contragredient representation, with multiplier $\bar{\sigma}$. Then $R(k,x):=\rho'(k)\otimes \pi(x)\delta(k)$ defines an irreducible linear representation of $K_\pi\ltimes H$ and the induced representation $\textup{Ind}_{K_\pi\ltimes H}^{K\ltimes H}R$ is irreducible. Also, 
$$
(\textup{Ind}_{K_\pi\ltimes H}^{K\ltimes H}R)\mid_K\backsimeq \textup{Ind}_{K_\pi}^{K}(R\mid_{K_\pi})=\textup{Ind}_{K_\pi}^{K}(\rho'\otimes \delta).
$$ 

By Frobenius reciprocity,
$$
c\big(\tau,(\textup{Ind}_{K_\pi\ltimes H}^{K\ltimes H}R)\mid_K\big)
=c(\tau\mid_{K_\pi},\rho'\otimes \delta).
$$

Since $(K\ltimes H,K,\tau)$ is a commutative triple, so is $(K\ltimes H,K,\tau^{\prime})$. Therefore 
$$
c\big(\tau^{\prime},(\textup{Ind}_{K_\pi\ltimes H}^{K\ltimes H}R)\mid_K\big)=0 \textup{ or } 1,
$$ 
and hence $c(\tau^\prime\mid_{K_\pi},\rho'\otimes \delta)=0$ or $1$. But 
$$
c(\tau^\prime\mid_{K_\pi},
\rho'\otimes\delta)=c\big(1,\rho'\otimes \delta\otimes(\tau\mid_{K_\pi})\big)=c\big(\rho,\delta\otimes(\tau\mid_{K_\pi})\big). 
$$ 

Hence it follows that $\delta\otimes(\tau\mid_{K_\pi})$ is multiplicity free.

Conversely, assume that the $K_\pi$-action on $\cH_\pi\otimes V_\tau$ is multiplicity free for all $\pi\in\widehat{H}$. Then it follows from Lemma \ref{pi(F) commutes with delta tensor tau} that $\pi(F)$ and $\pi(G)$ commutes whenever $F,G\in L^1_\tau(H)$. Since this is true for all $\pi\in\widehat{H}$, by uniqueness of the Fourier transform we can conclude that $F*G=G*F$.   
\end{proof}

\begin{definition}
Let $(K\ltimes H,K,\tau)$ be a commutative triple. A non-trivial function $\Psi \in L^\infty_\tau(H)$ is said to be a $\tau$-spherical function if the map $$F\rightarrow \widehat F (\Psi ):=\frac{1}{d_\tau}\int_H \textup{Tr}[\Psi (x^{-1})F(x)]dx$$ is a homomorphism of $L^1_\tau(H)$ into $\mathbb C$. Here $d_\tau$ is the dimension of $V_\tau$.
\end{definition} 

Recall the definition of $T$ from \ref{fromHtoG}.

\begin{theorem}
The following are equivalent:
\begin{enumerate}
\item[\rm(i)] $\Psi $ is a $\tau$-spherical function on $H$;
\item[\rm(ii)] $\Phi=T(\Psi )$ is a $\tau$-spherical function on $K\ltimes H$;
\item[\rm(iii)] $\Psi \in L^\infty_\tau(H)$ is non-trivial and, for all $h,h'\in H$, satisfies the identity
\begin{eqnarray}\label{functional equation for Phi_0}
d_\tau\int_K\tau(k^{-1})\Psi \big(h(k\cdot h^\prime)\big)\chi_\tau(k)dk=\Psi (h^\prime)\Psi (h).
\end{eqnarray}
\end{enumerate}
\end{theorem}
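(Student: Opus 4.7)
The plan is to use the map $T$ of \eqref{fromHtoG} to transfer the problem from $H$ to $G = K\ltimes H$ and then invoke Theorem \ref{characterization of spherical function in terms of functional equation-1} applied to $\Phi = T(\Psi)$.

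For (i)$\Leftrightarrow$(ii), I would first verify by direct computation that $T$ is an algebra isomorphism from $L^1_\tau(H)$ onto $L^1_{\tau,\tau}(G,\textup{End}(V_\tau))$: the equivariance $F(k\cdot h) = \tau(k) F(h) \tau(k)^{-1}$ on $H$ translates, via the semidirect-product multiplication, to the bi-$\tau$-equivariance of $TF$; and unfolding the inversion $(\ell,g)^{-1} = (\ell^{-1},\ell^{-1}\cdot g^{-1})$ inside $\int_G TF_2(y^{-1}(k,h))\,TF_1(y)\,dy$ yields $\tau(k)^{-1}(F_1*F_2)(h)$, that is, $T(F_1 * F_2)$. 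The bounded extension of $T$ is also bijective, and the Fourier pairings are identified by a trace-cyclicity computation,
$$
\widehat{TF}(T\Psi) \,=\, \tfrac{1}{d_\tau}\int_G \textup{Tr}\big[TF(g)\,T\Psi(g^{-1})\big]\,dg \,=\, \tfrac{1}{d_\tau}\int_H \textup{Tr}\big[F(h)\,\Psi(h^{-1})\big]\,dh \,=\, \widehat{F}(\Psi),
$$
where the $\tau(k)^{\pm 1}$ factors cancel under the trace. Hence $\Psi$ gives an algebra homomorphism on $L^1_\tau(H)$ iff $\Phi$ does on $L^1_{\tau,\tau}(G,\textup{End}(V_\tau))$.

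For (ii)$\Rightarrow$(iii), I would apply the functional equation \eqref{functional equation for Phi} for $\Phi = T\Psi$ in the special case $x = (e,h)$, $y = (e,h')$, $k = (k_0,e)$. A direct computation of the semidirect product gives $xky = (k_0,\,h\cdot(k_0\cdot h'))$, whence $\Phi(xky) = \tau(k_0)^{-1}\Psi\bigl(h(k_0\cdot h')\bigr)$ and $\Phi(y)\Phi(x) = \Psi(h')\Psi(h)$, which is precisely \eqref{functional equation for Phi_0}.

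For (iii)$\Rightarrow$(ii), I would extend \eqref{functional equation for Phi_0} to the full functional equation \eqref{functional equation for Phi} for arbitrary $x=(k_1,h)$, $y=(k_2,h')$ by writing $x = (k_1,e)\,(e,\,k_1^{-1}\cdot h)$ and $y = (e,h')\,(k_2,e)$, then pulling $\tau(k_2^{-1})$ and $\tau(k_1^{-1})$ outside the integral by the bi-$\tau$-equivariance of $\Phi$, applying \eqref{functional equation for Phi_0} to the resulting integral (with $h$ replaced by $k_1^{-1}\cdot h$), and finally using $\Psi(k_1^{-1}\cdot h) = \tau(k_1^{-1})\Psi(h)\tau(k_1)$ to collapse the product to $\tau(k_2^{-1})\Psi(h')\tau(k_1^{-1})\Psi(h) = \Phi(y)\Phi(x)$. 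The nontriviality of $\Phi$ is immediate from the injectivity of $T$; the normalization $\Phi(e)=I$, i.e., $\Psi(e)=I$, required by Theorem \ref{characterization of spherical function in terms of functional equation-1}, is obtained by setting $h' = e$ in \eqref{functional equation for Phi_0}, applying the Schur identity $d_\tau \int_K \tau(k)^{-1}\chi_\tau(k)\,dk = I$ to reduce to $\Psi(h) = \Psi(e)\Psi(h)$ for all $h$, and then noting that the resulting idempotent $\Psi(e)$ commutes with $\tau(K)$ and so equals $I$ by Schur's lemma together with the nontriviality of $\Psi$. The main obstacle is purely the careful semidirect-product bookkeeping in the (iii)$\Rightarrow$(ii) step; everything else is mechanical once $T$ is recognized as the correct intertwiner.
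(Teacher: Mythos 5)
Your proposal is correct and follows essentially the same route as the paper: (i)$\Leftrightarrow$(ii) via the identity $\widehat{T(F)}(\Phi)=\widehat F(\Psi)$ together with $T$ being an algebra isomorphism, and (ii)$\Leftrightarrow$(iii) by invoking Theorem \ref{characterization of spherical function in terms of functional equation-1} and checking that the two functional equations match (specializing to $x=(e,h)$, $y=(e,h')$ one way, and pulling out $\tau(k_1^{-1})$, $\tau(k_2^{-1})$ by bi-equivariance the other way). The only inessential difference is your extra derivation of $\Psi(e)=I$, which Theorem \ref{characterization of spherical function in terms of functional equation-1}(iii) does not actually require as a hypothesis.
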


\begin{proof}
An easy calculation shows that $\widehat{T(F)}(\Phi)=\widehat{F}(\Psi )$ for all $F\in L^1_\tau(H)$. This proves the equivalence of (i) and (ii).

In view of Theorem \ref{characterization of spherical function in terms of functional equation-1}, it is enough to show that \eqref{functional equation for Phi} is equivalent to \eqref{functional equation for Phi_0}. If $x=(e,h)$ and $y=(e,h^\prime)$ then $\Phi(xky)=\Phi(k,hk\cdot h^\prime)=\tau(k^{-1})\Psi (hk\cdot h^\prime)$. Therefore, putting $x=(e,h)$ and $y=(e,h^\prime)$ in \eqref{functional equation for Phi} we get \eqref{functional equation for Phi_0}. Conversely, if $x=(k_1,h_1)$ and $y(k_2,h_2)$ then 
\begin{eqnarray*} 
d_\tau\int_K\Phi(xky)\chi_\tau(k)dk 
&=& d_\tau\int_K\Phi\big(k_1kk_2,h_1(k_1k\cdot h_2)\big)\chi_\tau(k)dk\\
&=&d_\tau\int_K \tau(k_1kk_2)^{-1}\Psi \big(h_1(k_1k\cdot h_2)\big)\chi_\tau(k)dk\\
&=&d_\tau\int_K \tau(k_1kk_2)^{-1}\tau(k_1)\Psi \big((k_1^{-1}\cdot h_1)(k\cdot h_2)\big)\tau(k_1^{-1})\chi_\tau(k)dk\\
&=&\tau(k_2^{-1})\,d_\tau\int_K \tau(k^{-1})\Psi \big((k_1^{-1}\cdot h_1)(k\cdot h_2)\big)\chi_\tau(k)dk\,\tau(k_1^{-1})\\
&=&\tau(k_2^{-1})\Psi (h_2)\Psi (k_1^{-1}\cdot h_1)\tau(k_1^{-1})=\Phi(y)\Phi(x).\qquad\qedhere
\end{eqnarray*}  
\end{proof}

\vskip.5cm

\section{Differential operators on $K$-homogeneous bundles over $H$}\label{sect:diffonH}
\vskip.5cm

Once the sections of $E_\tau$ have been identified with $V_\tau$-valued functions on $H$, we can also realize the elements of $\bD(E_\tau)$ as differential operators on $V_\tau$-valued functions on $H$ which are left-invariant and commute with the action \eqref{KonHsections} of $K$. 

We denote the algebra of such operators by 
$\big(\bD(H)\otimes \textup{End}(V_\tau)\big)^K$.
\smallskip

Keeping Theorem \ref{Lambda} in mind, we can choose $\fp$, the $\textup{Ad}(K)$-invariant complement of $\fk$ in $\fg$, to be $\fh$, the Lie algebra of $H$, and define the map
$$
\Lambda'=\lambda_\fh\otimes I:\big(\fS(\fh)\otimes\textup{End}(V_\tau)\big)^K\longrightarrow \big(\bD(H)\otimes \textup{End}(V_\tau)\big)^K.
$$

The two maps $\Lambda$ of Lemma \ref{Symm} and $\Lambda'$ are conjugate of each other under the map $T$ in \eqref{fromHtoG}. This gives the following theorem.

\begin{theorem}\label{relation between diff operator iii and iv}
The algebras $\bD(E_\tau)$ and $\big(\mathbb{D}(H)\otimes\textup{End}(V_\tau)\big)^K$ are isomorphic. 
In particular, $(K\ltimes H,K,\tau)$ is a commutative triple if and only if $\big(\mathbb{D}(H)\otimes \textup{End}(V_\tau)\big)^K$ is commutative. 
\end{theorem}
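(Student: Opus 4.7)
The plan is to exploit the hint dropped immediately before the statement: realize both $\bD(E_\tau)$ and $\bigl(\bD(H)\otimes\textup{End}(V_\tau)\bigr)^K$ as images of the common parameter space $\bigl(\fS(\fh)\otimes\textup{End}(V_\tau)\bigr)^K$, and check that the two realizations are related by conjugation under the identification map $T$ of \eqref{fromHtoG}. Since $G=K\ltimes H$, the subspace $\fh$ is an $\textup{Ad}(K)$-invariant complement of $\fk$ in $\fg$, so Theorem \ref{Lambda} (applied with $\fp=\fh$) gives a linear bijection
$$
\Lambda\colon \bigl(\fS(\fh)\otimes\textup{End}(V_\tau)\bigr)^K\longrightarrow \bD(E_\tau).
$$
On the other hand, PBW for $H$ alone provides the $\textup{Ad}(K)$-equivariant linear bijection $\lambda_\fh\colon\fS(\fh)\to\bD(H)$, so $\Lambda'=\lambda_\fh\otimes I$ is a linear bijection from $\bigl(\fS(\fh)\otimes\textup{End}(V_\tau)\bigr)^K$ onto $\bigl(\bD(H)\otimes\textup{End}(V_\tau)\bigr)^K$.

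The core of the proof is then to show that for every $P\in\bigl(\fS(\fh)\otimes\textup{End}(V_\tau)\bigr)^K$ one has $\Lambda(P)\circ T=T\circ\Lambda'(P)$ as operators from $C^\infty(H,V_\tau)$ to $C^\infty_\tau(G,V_\tau)$. I would verify this in two steps. First, the operator $\Psi_P:=T\circ\Lambda'(P)\circ T^{-1}$ is $G$-invariant: translations by $(e,h_0)\in H$ correspond under $T$ to left-translations of $u_0$ on $H$, with which $\Lambda'(P)$ commutes automatically as a left-invariant differential operator on $H$; translations by $(k_0,e)$ correspond under $T$ to the action \eqref{KonHsections} of $K$, with which $\Lambda'(P)$ commutes precisely because $P$ is $K$-invariant. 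Second, since both $\Psi_P$ and $\Lambda(P)$ are $G$-invariant operators on $C^\infty_\tau(G,V_\tau)$, they coincide as soon as they agree on $(Tu_0)(e)$ for every $u_0$. This final check reduces to computing, for $q\in\fS(\fh)$, the left-invariant derivation $\lambda_\fg(q)$ at the identity of $G$: using the semidirect product formula $(e,h)(e,\exp_H tX)=(e,h\exp_H tX)$ for $X\in\fh$, one sees inductively that $\lambda_\fg(q)(Tu_0)$ restricted to $\{(e,h):h\in H\}$ is exactly $\lambda_\fh(q)u_0$, which gives the identity at $e$.

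Conjugation by $T$ is an algebra homomorphism, so this identity of operators upgrades the linear bijection $\mathbf{D}\mapsto T^{-1}\mathbf{D}\,T$ between $\bD(E_\tau)$ and $\bigl(\bD(H)\otimes\textup{End}(V_\tau)\bigr)^K$ into the desired algebra isomorphism. The ``in particular'' clause is then immediate from Theorem \ref{gelfand triple iff diff op commutative III}: commutativity of the triple is equivalent to commutativity of $\bD(E_\tau)$, which under the isomorphism is commutativity of $\bigl(\bD(H)\otimes\textup{End}(V_\tau)\bigr)^K$. The main technical obstacle is the bookkeeping at the identity in the second step: one must track how the semidirect product multiplication interacts with the symmetrization $\lambda_\fg$ restricted to monomials in $\fh$, and ensure that the $\fk$-factor (which in general would contribute via \eqref{X=-dtau} to the scalar picture) is genuinely absent here because $P$ involves only $\fh$.
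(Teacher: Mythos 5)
Your proposal is correct and follows exactly the route the paper takes: the paper's entire argument is the remark preceding the theorem that, with $\fp=\fh$, the bijections $\Lambda$ of Theorem \ref{Lambda} and $\Lambda'=\lambda_\fh\otimes I$ are conjugate under $T$, and your two-step verification (G-invariance of $T\circ\Lambda'(P)\circ T^{-1}$, then agreement with $\Lambda(P)$ at the identity via the semidirect product formula) is precisely the detail the paper leaves implicit. The deduction of the ``in particular'' clause from Theorem \ref{gelfand triple iff diff op commutative III} also matches the paper's intent.
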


\begin{corollary}
Let $(K\ltimes H,K,\tau)$ be a commutative triple and $\Psi \in L^\infty_{\tau}(H)$. The following are equivalent: 
\begin{itemize}
\item[\bf{\rm(i)}] $\Psi $ is a $ \tau$-spherical function.
\item[\bf{\rm(ii)}] $\Psi $ is a joint eigenfunction for all $\textbf{D}\in\big(\mathbb{D}(H)\otimes\textup{End}(V_\tau)\big)^K$.  
\end{itemize}
\end{corollary}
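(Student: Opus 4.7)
The plan is to reduce the statement to the already proved characterization of $\tau$-spherical functions on $G=K\ltimes H$ via the identification $T$ of \eqref{fromHtoG}, and then transport the eigenvalue equations across the algebra isomorphism of Theorem \ref{relation between diff operator iii and iv}.

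First, by the preceding theorem, $\Psi\in L^\infty_\tau(H)$ is a $\tau$-spherical function on $H$ if and only if $\Phi:=T(\Psi)\in L^\infty_{\tau,\tau}\bigl(G,\textup{End}(V_\tau)\bigr)$ is a $\tau$-spherical function on $G=K\ltimes H$. Since $G/K\cong H$ is connected, Theorem \ref{spherical-diff} applies: $\Phi$ is $\tau$-spherical if and only if $\Phi(e)=I$ and $\Phi$ is a joint eigenfunction of every $\textbf{D}\in\big(\bD(G)\otimes\textup{End}(V_\tau)\big)^K$.

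Next, I would invoke Theorem \ref{relation between diff operator iii and iv}: the map $T$ intertwines the action of $\big(\bD(H)\otimes\textup{End}(V_\tau)\big)^K$ on $V_\tau$-valued functions on $H$ with the action of $\bD(E_\tau)$ on smooth sections of $E_\tau$, and so induces an algebra isomorphism $\bD(E_\tau)\cong\big(\bD(H)\otimes\textup{End}(V_\tau)\big)^K$. Concretely, if $\textbf{D}'\in\big(\bD(H)\otimes\textup{End}(V_\tau)\big)^K$ corresponds to $\textbf{D}\in\bD(E_\tau)$ under this isomorphism, then
\[
T(\textbf{D}'\Psi)=\textbf{D}\,T(\Psi)=\textbf{D}\Phi ,
\]
so the eigenvalue equation $\textbf{D}\Phi=\lambda\Phi$ on $G$ transports to $\textbf{D}'\Psi=\lambda\Psi$ on $H$. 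Combining with the previous paragraph yields the claimed equivalence (i)$\Leftrightarrow$(ii).

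The only mildly delicate point is to make sure that the passage from $\big(\bD(G)\otimes\textup{End}(V_\tau)\big)^K$ in Theorem \ref{spherical-diff} to $\big(\bD(H)\otimes\textup{End}(V_\tau)\big)^K$ in the statement is legitimate: operators in the former which act trivially on sections of $E_\tau$ must not produce spurious eigenvalue conditions. This is exactly handled by passing to the quotient $\bD(E_\tau)$ and applying the isomorphism in Theorem \ref{relation between diff operator iii and iv}. Once this bookkeeping is in place, the rest is a direct transfer of the equivalence already established on $G$.
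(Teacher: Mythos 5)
Your argument is exactly the one the paper intends: the corollary is stated as an immediate consequence of Theorem \ref{relation between diff operator iii and iv}, obtained by transporting the characterization of Theorem \ref{spherical-diff} (and the equivalence $\Psi\ \tau$-spherical on $H$ $\Leftrightarrow$ $T(\Psi)$ $\tau$-spherical on $G$ from Section \ref{sect:bundles}) across the conjugation by $T$, and your handling of the quotient $\bD(E_\tau)$ is the right way to dispose of operators acting trivially on sections. The only cosmetic remark is that your transfer correctly carries the normalization $\Phi(e)=I$ along, which the corollary's item (ii) leaves implicit.
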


It must be noticed that we have no analogue of Corollary \ref{scalarops} for differential operators on $H$, i.e., the effective action of $\big(\mathbb{D}(H)\otimes \textup{End}(V_\tau)\big)^K$ on $C^\infty$ $V_\tau$-valued functions includes the action of $\mathbb{D}_K(H)$ properly. 
However, we have the following proposition.

\begin{proposition}\label{finitely generated}
$\big(\mathbb{D}(H)\otimes\textup{End}(V_\tau)\big)^K$ is a finite module over  $\mathbb{D}_K(H)$.
\end{proposition}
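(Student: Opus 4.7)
The plan is to reduce the claim to a statement in classical invariant theory via the symmetrization map, and then transfer finite generation from the associated graded back to the differential operator algebra through a standard filtration argument.

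Concretely, both $\big(\mathbb{D}(H)\otimes\textup{End}(V_\tau)\big)^K$ and $\mathbb{D}_K(H)$ inherit the order filtration from $\mathbb{D}(H)=\fU(\fh)$. The symmetrization map $\lambda_\fh\colon\fS(\fh)\to\fU(\fh)$ is a $K$-equivariant linear bijection, and on the associated graded it is an algebra isomorphism onto $\fS(\fh)$. Since $K$ is compact, taking $K$-invariants is exact, so it commutes with the associated graded construction. Hence
\[
\textup{gr}\,\big(\mathbb{D}(H)\otimes\textup{End}(V_\tau)\big)^K\;\cong\;\big(\fS(\fh)\otimes\textup{End}(V_\tau)\big)^K
\]
as a graded module over $\textup{gr}\,\mathbb{D}_K(H)\cong\fS(\fh)^K$. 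This purely formal step recasts the problem as one about invariants of a linear action of the compact (hence reductive) group $K$ on a finite-dimensional vector space.

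Next, I would establish the polynomial version: $\big(\fS(\fh)\otimes\textup{End}(V_\tau)\big)^K$ is a finitely generated module over $\fS(\fh)^K$. By Hilbert's finiteness theorem, $A^K:=\fS(\fh)^K$ is a finitely generated (hence Noetherian) commutative algebra. Let $A=\fS(\fh)\otimes\textup{End}(V_\tau)$; as a free module of rank $\dim V_\tau^2$ over the Noetherian ring $\fS(\fh)$, $A$ is itself Noetherian. Consequently the $\fS(\fh)$-submodule generated by the invariants, $N:=\fS(\fh)\cdot\big(\fS(\fh)\otimes\textup{End}(V_\tau)\big)^K$, is finitely generated over $\fS(\fh)$, and one may choose generators $m_1,\dots,m_\ell$ lying in the invariant part. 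Applying the Reynolds operator $R$ (averaging over $K$, which is $\fS(\fh)^K$-linear and fixes invariants) to a relation $m=\sum a_im_i$ for any invariant $m$ yields $m=\sum R(a_i)m_i$ with $R(a_i)\in\fS(\fh)^K$, so $m_1,\dots,m_\ell$ generate $\big(\fS(\fh)\otimes\textup{End}(V_\tau)\big)^K$ over $\fS(\fh)^K$.

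Finally, I lift to the filtered setting. Choose homogeneous invariant polynomials $\bar M_1,\dots,\bar M_\ell$ of degrees $d_1,\dots,d_\ell$ generating the associated graded, and lift them through $\Lambda'$ to elements $M_i\in\big(\mathbb{D}(H)\otimes\textup{End}(V_\tau)\big)^K$ of order at most $d_i$. For any $M$ of order $n$ in the invariant algebra, its principal symbol decomposes as $\sum\bar r_i\bar M_i$ with $\bar r_i\in\fS(\fh)^K$ of degree $n-d_i$; lifting to $r_i\in\mathbb{D}_K(H)$ of order $\le n-d_i$, the difference $M-\sum r_iM_i$ has order strictly less than $n$. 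Induction on $n$ concludes that the $M_i$ generate $\big(\mathbb{D}(H)\otimes\textup{End}(V_\tau)\big)^K$ as a $\mathbb{D}_K(H)$-module. The main obstacle is the invariant-theoretic core in the second paragraph; once that is in hand, the filtration/lifting argument is routine.
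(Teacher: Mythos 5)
Your proof is correct, but it follows a genuinely different route from the paper's. The paper also works at the polynomial level with $\big(\fS(\fh)\otimes\textup{End}(V_\tau)\big)^K$ over $\fS(\fh)^K$, but its key step is the Cayley--Hamilton theorem: since $P(k\cdot x)$ is conjugate to $P(x)$, the coefficients $q_1,\dots,q_{d_\tau}$ of the characteristic polynomial of $P(x)$ lie in $\fS(\fh)^K$, whence $P^{d_\tau}=-q_1P^{d_\tau-1}-\cdots-q_{d_\tau}I$; every element is thus integral of degree at most $d_\tau$ over $\fS(\fh)^K$, and this is combined with the finite generation of the algebra from Corollary~\ref{finite basis}. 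You instead obtain module-finiteness by the standard covariants argument: Noetherianity of $\fS(\fh)$ yields finitely many invariant generators of the $\fS(\fh)$-submodule spanned by the invariants, and the Reynolds operator pushes the coefficients of any relation down into $\fS(\fh)^K$. Your route has the advantage of using only the module structure; in particular it sidesteps the delicate point that, for a noncommutative algebra such as this one, elementwise integrality over a central subring together with finite generation as an algebra does not formally imply module-finiteness without further input (the paper is quite terse on exactly this combination step). You also make explicit the filtered-to-graded transfer from $\fS(\fh)$ back to $\bD(H)$, which the paper leaves implicit. What the paper's argument buys in exchange is brevity and the quantitative fact that each invariant satisfies a monic equation of degree at most $d_\tau$ over $\fS(\fh)^K$.
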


\begin{proof}
Given $P\in \big(\fS(\fh)\otimes \textup{End}(V_\tau)\big)^K$, consider the characteristic polynomial
$$
\det\big(P(x)-\lambda I\big)=\pm\big(\lambda^d+q_1(x)\lambda^{d-1}+\cdots+q_d(x)\big)\ ,
$$
where $q_j\in \fS(\fh)^K$ for $j=1,\dots,d$. It follows from the Cayley-Hamilton theorem that
$$
P(x)^d=-q_1(x)P(x)^{d-1}-\cdots-q_d(x)I\ .
$$

Combining this with Corollary \ref{finite basis}, we obtain the conclusion.
\end{proof}

\vskip.5cm

\section{Relations among the spectra $\Sigma_\tau$ and $\Sigma_1$}\label{sect:spectra}

\vskip.5cm

It follows from Corollary \ref{triple->pair} that, if $(K\ltimes H,K,\tau)$ is a commutative triple, the same is true with $\tau$ replaced by the trivial representation, i.e., $(K\ltimes H,K)$ is a Gelfand pair. We establish relations between the two spectra, $\Sigma_\tau$ and $\Sigma_1$ respectively, also in terms of their embeddings into complex spaces as introduced in Section~\ref{sect:embeddings}. We start from the following statement.

\begin{lemma}
Given an $\tau$-spherical function $\Psi $, the function 
$\psi=d_\tau\inv\textup{Tr}\,\Psi $
is a usual spherical function for the Gelfand pair $(K\ltimes H,K)$. For $D\in\bD(H)^K$, the following relation between eigenvalues holds:
$$
\la_D(\psi)=\la_{D\otimes I}(\Psi )\ .
$$

The map $d_\tau\inv\textup{Tr}$ is continuous from $\Sigma_\tau$ to $\Sigma_1$.
\end{lemma}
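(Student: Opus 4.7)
My plan is to reduce all three claims to the differential-operator characterizations of spherical functions: the Corollary after Theorem \ref{relation between diff operator iii and iv} for the $\tau$-side, and the classical characterization for the Gelfand pair $(K\ltimes H, K)$ — which is a Gelfand pair by Corollary \ref{triple->pair} — on the scalar side. The central observation is that for every $D\in\bD(H)^K$, the operator $D\otimes I_{V_\tau}$ lies in $\bigl(\bD(H)\otimes\textup{End}(V_\tau)\bigr)^K$, and that $\textup{Tr}$ intertwines the componentwise action of $D$ on $\textup{End}(V_\tau)$-valued functions with its action on scalar functions.

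I would handle the first two assertions simultaneously. Since $\Psi$ is $\tau$-spherical, the cited Corollary gives $(D\otimes I)\Psi = \la_{D\otimes I}(\Psi)\,\Psi$ for every $D\in\bD(H)^K$. Taking trace of both sides and dividing by $d_\tau$ yields
$$D\psi = \la_{D\otimes I}(\Psi)\,\psi,$$
which proves the eigenvalue identity and shows that $\psi$ is a joint eigenfunction of $\bD(H)^K$. The remaining conditions needed to place $\psi$ in $\Sigma_1$ are then straightforward: $\psi(e) = d_\tau^{-1}\textup{Tr}(I_{V_\tau}) = 1$ using $\Psi(e)=I$ from Theorem \ref{characterization of spherical function in terms of functional equation-1}; $\psi$ is $K$-invariant on $H$ by cyclicity of the trace together with \eqref{eqn tau type function}; and $\psi$ is bounded because $\Psi \in L^\infty_\tau(H)$. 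By the classical characterization of bounded spherical functions of a Gelfand pair as $K$-bi-invariant joint eigenfunctions of the invariant differential operators taking value one at the identity — equivalently, Theorem \ref{spherical-diff} applied to the trivial triple $(K\ltimes H, K, 1)$ — we obtain $\psi\in\Sigma_1$.

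For the continuity claim, both $\Sigma_\tau$ and $\Sigma_1$ carry the compact-open topology (by the theorem of Section \ref{sect:embeddings} on the $\tau$-side and its classical scalar analogue on the Gelfand pair side). If $\Psi_n\to\Psi$ uniformly on compact subsets of $H$, then the Lipschitz continuity of $\textup{Tr}:\textup{End}(V_\tau)\to\bC$ forces $\textup{Tr}\,\Psi_n\to\textup{Tr}\,\Psi$ uniformly on compacts, i.e., $\psi_n\to\psi$ in $\Sigma_1$.

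I do not expect a serious obstacle here. The only subtlety that merits a brief verification is the commutativity of $\textup{Tr}$ with the differential operator $D$; this is immediate, since, on $\textup{End}(V_\tau)$-valued functions, $D$ acts entrywise in any chosen basis of $\textup{End}(V_\tau)$ and therefore commutes with every scalar-linear operation in the matrix argument.
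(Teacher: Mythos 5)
Your proposal is correct and follows essentially the same route as the paper: the paper likewise observes that $\textup{Tr}$ is $K$-invariant by \eqref{eqn tau type function}, that $\psi(e)=1$, that $D\psi=d_\tau^{-1}\textup{Tr}\bigl((D\otimes I)\Psi\bigr)=\la_{D\otimes I}(\Psi)\psi$ because $D$ acts entrywise, and that continuity in the compact-open topologies is immediate. Your write-up merely makes explicit the appeal to the eigenfunction characterization of scalar spherical functions (via Corollary \ref{triple->pair} and Theorem \ref{spherical-diff}) that the paper leaves implicit.
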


\begin{proof}
Since $\Psi $ satisfies \eqref{eqn tau type function}, $\textup{Tr}\,\Psi $ is $K$-invariant.
Moreover, $\psi(e)=1$ and, for $D\in\bD(H)^K$,
$$
D\psi=\frac1{d_\tau}D(\textup{Tr}\,\Psi )=\frac1{d_\tau}\textup{Tr}\,\big((D\otimes I)\Psi \big)=\la_{D\otimes I}(\Psi )\psi.
$$

Continuity of the map $\Psi \mapsto\psi$ with respect to the topologies of uniform convergence on compact sets is obvious.
\end{proof}

Reformulating  the results of Section \ref{sect:eigenfunctions} in terms of $\textup{End}(V_\tau)$-valued spherical functions and differential operators, and specializing to the present situation where $G=K\ltimes H$, such embeddings depend on the choice of a finite generating system  $\cD=\{\textit{\textbf D}_j\}\subset\big(\mathbb{D}(H)\otimes\textup{End}(V_\tau)\big)^K$.

It is convenient to consider systems $\cD$ formed by a generating system $\cD_0=\{D_1,\dots, D_h\}$ of $\bD_K(H)$, completed with  $\textit{\textbf D}_{h+1},\dots, \textit{\textbf D}_d$ generating $\big(\mathbb{D}(H)\otimes\textup{End}(V_\tau)\big)^K$ as a $\bD_K(H)$-module.
Then every element $\textit{\textbf D}\in\big(\mathbb{D}(H)\otimes\textup{End}(V_\tau)\big)^K$ can be expressed in the form
$$
\textit{\textbf D}=L_0\otimes I+\sum_{j=h+1}^d L_j\textit{\textbf D}_j,
$$
with $L_0,L_{h+1},\dots,L_d\in \bD(H)^K$, it follows that, setting $\textit{\textbf D}_j=D_j\otimes I$ for $j=1,\dots,h$, $\big(\mathbb{D}(H)\otimes\textup{End}(V_\tau)\big)^K$ is generated by 
$$
\cD=\{\textit{\textbf D}_1,\dots, \textit{\textbf D}_d\}.
$$

Simultaneously, the generating system $\cD_0=\{D_1,\dots, D_h\}$ of $\bD_K(G)$ induces an embedding $\rho_{\cD_0}$ of the spectrum $\Sigma_1$ of the Gelfand pair $(G,K)$ into $\bC^h$.

With $\pi_1$ denoting the canonical projection of $\bC^h\times\bC^{d-h}$ onto its first factor, we have the following commutative diagram:

\centerline{
\begin{tikzpicture}[node distance=6.5cm, auto]
\node (A) {$\Sigma_\tau$};
\node (B) [node distance=7cm, right of=A] {$\rho_\cD(\Sigma_\tau)$};
\node (C) [node distance=3cm, below of=B] {$\rho_{\cD_0}(\Sigma_1)$};
\node (D) [node distance=7cm, left of=C] {$\Sigma_1$};
\draw[->] (A) to node [swap] {$\rho_\cD$} (B);
\draw[->] (B) to node [swap] {$\pi_1$} (C);
\draw[->] (A) to node [swap] {$d_\tau\inv\textup{Tr}$} (D);
\draw[->] (D) to node [swap] {$\rho_{\cD_0}$} (C);
\end{tikzpicture}
}

In particular, $\pi_1$ maps $\rho_\cD(\Sigma_\tau)$ into $\rho_{\cD_0}(\Sigma_1)$ and, if $\psi=d_\tau\inv\textup{Tr}\,\Psi $, $\rho_{\cD_0}(\psi)=\pi_1\circ\rho_\cD(\Psi )$.

\vskip.5cm

\section{Spherical functions of positive type}\label{sect:positive}
\vskip.5cm

Let $V$ be a finite dimensional Euclidean complex space, i.e., endowed with a positive definite Hermitean product.
A continuous ${\rm End}(V)$-valued function $F$ is of positive type on a group $G$ if any of the following equivalent conditions holds, for every finite choice of elements $x_1,\dots,x_n\in G$:
 \begin{enumerate}
 \item for every $v_1,\dots,v_n\in V$, 

 \begin{eqnarray}\label{positive definite function}
\sum_{j,k}\langle F( x_j x_k^{-1})v_k,v_j\rangle\geq 0. 
\end{eqnarray}

\item for every $v_1,\dots,v_n\in V$, the matrix 
$$
\big(\langle F(x_jx_k\inv)v_k,v_j\rangle\big)_{jk}
$$ 
is positive semi-definite;
\item for every  $B_1,\dots,B_n\in {\rm End}(V)$, 
$$
\sum_{j,k}B_j^*F(x_jx_k\inv)B_k\in {\rm End}(V)
$$ 
is positive semi-definite;
\item for any $E\in C_c^\infty(G,\textup{End}(V)))$ 
\begin{eqnarray}\label{equivalent condition for positive definite}
\int_G\textup{Tr}\big[(E^**E)(x)F(x^{-1})\big]dx\geq 0.
\end{eqnarray}

\end{enumerate}

Moreover any measurable function $F$ satisfying condition (4) is continuous and of positive type.

\begin{proposition}
Let $F$ be of positive type. The following properties hold.
\begin{enumerate}
\item[(i)] $F(e)$ is positive semi-definite;
\item[(ii)] for every $x\in G$, $F(x^{-1})=F(x)^*$;
\item [(iii)] fore every $v\in V$, $\psi_v(x)=\langle F(x)v,v\rangle$ is of positive type;
\item[(iv)] $\ker F(e)\subseteq \ker F(x)$ and ${\rm im\,}F(x)\subseteq {\rm im\,}F(e)$  for every $x\in G$;
\item[(v)] for every $x\in G$, $F(x)^*F(x)\le F(e)^2$.
\end{enumerate}
\end{proposition}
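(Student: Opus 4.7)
The plan is to derive each item by specializing the positive-type condition, in whichever of its equivalent forms \textup{(1)--(4)} is most convenient, to a small tailored choice of group elements and vectors. Items (i), (ii), and (iii) follow almost directly: (i) is \textup{(1)} with $n=1$ and $x_1=e$; for (ii) take $n=2$, $x_1=e$, $x_2=x$ and note that the resulting positive-semidefinite $2\times 2$ scalar matrix $\big(\langle F(x_jx_k\inv)v_k,v_j\rangle\big)$ must be Hermitian, so comparing off-diagonal entries yields $\langle F(x\inv)v_2,v_1\rangle=\overline{\langle F(x)v_1,v_2\rangle}$ for every $v_1,v_2\in V$, i.e.\ $F(x\inv)=F(x)^*$; for (iii), substitute $v_k=\overline{c_k}\,v$ in \textup{(1)} with $v\in V$ fixed and observe that the factors $c_j\overline{c_k}$ pull out, producing the scalar positive-type condition for $\psi_v$.

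For (iv), I first prove $\ker F(e)\subseteq\ker F(x)$. Given $v\in\ker F(e)$, apply \textup{(1)} with $n=2$, $x_1=e$, $x_2=x$, $v_1=\lambda v$ and $v_2=u\in V$ arbitrary; using (i)--(ii) and $F(e)v=0$, the terms quadratic in $\lambda$ vanish and the inequality reduces to
\[
2\,\RE\bigl(\lambda\langle F(x)v,u\rangle\bigr)+\langle F(e)u,u\rangle\ge 0\qquad\forall\,\lambda\in\bC,
\]
which forces $\langle F(x)v,u\rangle=0$, since otherwise a suitable choice of $\lambda$ would make the left-hand side arbitrarily negative. As $u$ is arbitrary, $F(x)v=0$. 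The image statement is then dual: ${\rm im\,}F(x)\subseteq{\rm im\,}F(e)$ is equivalent via orthogonal complements to $\ker F(e)^*\subseteq\ker F(x)^*$, which by (i)--(ii) is the inclusion $\ker F(e)\subseteq\ker F(x\inv)$, i.e.\ the kernel statement applied to $x\inv$ in place of $x$.

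Part (v) is where I expect the main obstacle. A natural first step is the scalar Cauchy--Schwarz bound $|\langle F(x)v,u\rangle|^2\le\langle F(e)v,v\rangle\langle F(e)u,u\rangle$, obtained as the non-negativity of the determinant of the PSD $2\times 2$ scalar matrix used in (ii). To upgrade this quadratic-form estimate to the operator inequality $F(x)^*F(x)\le F(e)^2$, I would appeal to the stronger operator-valued form \textup{(3)}: specializing $\sum_{j,k}B_j^*F(x_jx_k\inv)B_k\ge 0$ to $n=2$, $x_1=e$, $x_2=x$ expresses the positive-semidefiniteness on $V\oplus V$ of the block operator
\[
\begin{pmatrix}F(e) & F(x)^*\\ F(x) & F(e)\end{pmatrix}.
\]
A Schur-complement-type manipulation from this block PSD condition, combined with the kernel/image inclusions of (iv) to deal with a possibly non-invertible $F(e)$, should yield (v); the delicate point is to carry out the algebra so that no invertibility assumption on $F(e)$ enters.
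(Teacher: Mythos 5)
Items (i)--(iv) of your proposal are correct and, modulo cosmetic differences, follow the same route as the paper: (i) and (iii) are immediate specializations of the positivity condition; your (ii) extracts Hermitian symmetry from the $2\times 2$ Gram matrix, which is exactly what the paper does by comparing imaginary parts and then replacing $v_1$ by $iv_1$; and your kernel argument in (iv) is the paper's, with a complex parameter $\lambda$ in place of a real one followed by the substitution $v_2\mapsto iv_2$. On (iv) you are in fact more complete than the paper, which does not write out the duality argument for the image inclusion at all (and ends with the kernel inclusion stated backwards, evidently a slip).

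The difficulty you flag in (v) is genuine, and the route you propose cannot close it. All that the block positivity of $\begin{pmatrix} F(e) & F(x)^*\\ F(x) & F(e)\end{pmatrix}$ on $V\oplus V$ yields, via the Schur-complement analysis, is a factorization $F(x)=F(e)^{1/2}KF(e)^{1/2}$ with $\|K\|\le 1$, equivalently $F(x)^*F(e)^{-1}F(x)\le F(e)$ (suitably interpreted on ${\rm im}\,F(e)$). This is strictly weaker than $F(x)^*F(x)\le F(e)^2$ unless $F(e)$ is a scalar multiple of the identity. Worse, the asserted inequality is not a consequence of positive type at all: on $G=\mathbb Z$ take $F(m)=TU^mT$ with $T={\rm diag}(1,t)$, $0<t<1$, and $U$ the coordinate swap on $V=\mathbb C^2$. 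This $F$ is of positive type, since $\sum_{j,k}\langle F(m_j-m_k)v_k,v_j\rangle=\big\|\sum_k U^{-m_k}Tv_k\big\|^2\ge 0$, yet $F(1)^*F(1)=t^2I$ is not $\le F(0)^2={\rm diag}(1,t^4)$, as the $(2,2)$ entry shows. So no amount of care with non-invertible $F(e)$ will produce (v) from the block condition, nor from conditions (1)--(4) in full.

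You should also know that the paper's own proof of (v) has the same defect. It reduces to $F(e)$ invertible, replaces $F$ by $G=F(e)^{-1/2}FF(e)^{-1/2}$, and correctly shows $\|G(x)\|\le 1$; but undoing the normalization turns this into $F(x)^*F(e)^{-1}F(x)\le F(e)$, not into $\|F(x)v\|\le\|F(e)v\|$. Both arguments (and the statement itself) are correct precisely when $F(e)$ is scalar, which is the case needed later in the paper, since $\tau$-spherical functions satisfy $\Phi(e)=I$. The honest fix is therefore either to restate (v) in the Schur-complement form $F(x)^*F(e)^{-1}F(x)\le F(e)$, or to add the hypothesis $F(e)=I$; your instinct that "the delicate point is to carry out the algebra so that no invertibility assumption on $F(e)$ enters" was pointing at a real obstruction, not a technicality.
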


\begin{proof}
To prove (i), apply \eqref{positive definite function} with $n=1$. 

To prove (ii), apply \eqref{positive definite function} with $n=2$, $x_1=x$, $x_2=e$. Then, for every $v_1,v_2$,
\begin{equation}\label{2}
\langle F(e)v_1,v_1\rangle+\langle F(x)v_1,v_2\rangle+ \langle F(x^{-1})v_2,v_1\rangle+ \langle F(e)v_2,v_2\rangle\ge0\ .
\end{equation}
Then
$$
\textup{Im} \big(\langle F(x)v_1,v_2\rangle+ \langle F(x^{-1})v_2,v_1\rangle\big)=0\ ,
$$
i.e., $\textup{Im} \langle F(x)v_1,v_2\rangle=\textup{Im} \langle F(x^{-1})^*v_1,v_2\rangle$. Replacing $v_1$ by $iv_1$, we find that also the real parts are equal, and (ii) follows.

(iii) follows from condition (2) with $v_j=v$ for all $j$.

To prove (iv), apply \eqref{positive definite function} with $n=2,x_1=x,x_2=e,v_1=v_1,v_2=\lambda v_2$, where $\lambda$ is real. Then we get $$\langle F(e)v_1,v_1\rangle+\lambda^2\langle F(e)v_2,v_2\rangle+ \lambda\langle F(x^{-1})v_2,v_1\rangle+ \lambda\langle F(x)v_1,v_2\rangle\ge0\ .$$ If $v_1\in\textup{Ker}F(e)$, $$\lambda^2\langle F(e)v_2,v_2\rangle+
2\lambda\textup{Re}\langle
F(x)v_1,v_2\rangle\geq 0$$ which implies that $$|\lambda|\langle F(e)v_2,v_2\rangle\geq |\textup{Re}\langle F(x)v_1,v_2\rangle|$$ for all real $\lambda$.
Taking $\lambda\rightarrow 0$, we deduce that $\textup{Re}\langle F(x)v_1,v_2\rangle=0$. Replacing $v_2=iv_2$, we also get that imaginary part of $\langle F(x)v_1,v_2\rangle$ is zero, so that $\langle F(x)v_1,v_2\rangle=0$. Since this is true for any $v_1\in\textup{Ker} F(e),v_2\in V$, we conclude that $\textup{Ker} F(x)\subset\textup{Ker} F(e).$ 

(v) is equivalent to the condition $\| F(x)v\|\le\| F(e)v\|$ for all $v\in V$. By (iv), it is sufficient to take $v\in\big(\textup{ker} F(e)\big)^\bot$. 
So we may assume that $ F(e)$ is invertible. In this case, replacing $ F(x)$ by $ F(e)^{-\frac{1}{2}} F(x) F(e)^{-\frac{1}{2}}$, which remains of positive type, we may assume that $ F(e)=I$.
Applying \eqref{2} with $v_2$ replaced by $e^{i\theta} F(x)v_2$, we have
$$
\|v_1\|^2+2\textup{Re}\big(e^{-i\theta}\langle F(x)v_1, F(x)v_2\rangle\big)+\| F(x)v_2\|^2\ge0\ ,
$$
which gives, by the arbitrarity of $\theta$,
$$
2\big|\langle v_1, F(x)^* F(x)v_2\rangle\big|\le \|v_1\|^2+\| F(x)v_2\|^2\ .
$$
Passing to the supremum over $v_1$ of unit norm, we obtain the inequality
$$
2\| F(x)^* F(x)v_2\|\le 1+\| F(x)v_2\|^2\ .
$$
For $\|v_2\|=1$, this imples that
$$
2\| F(x)v_2\|^2=2\langle  F(x)^* F(x)v_2,v_2\rangle\le 2\| F(x)^* F(x)v_2\|\le 1+\| F(x)v_2\|^2\ ,
$$
whence $\| F(x)v_2\|\le1$.
\end{proof}

The proofs of the following result can be found in \cite{S}, \cite[vol. II p. 15-16 and Remark]{War}.

\begin{theorem}\label{positive-representation}
Let $(G,K,\tau)$ be a commutative triple. Given $\pi\in\widehat G$ such that $\tau\subset \pi_{|_K}$, (say, with $V_\tau \subset \cH_\pi$), the function $\Phi$ given by
\begin{equation}\label{Phi-postype}
\langle\Phi(x)u,v\rangle=\langle\pi(x)u,v\rangle\ ,
\end{equation}
 with $u,v\in V_\tau$, is $\tau$-spherical and of positive type.
Conversely, every spherical function of positive type arises in this way.
\end{theorem}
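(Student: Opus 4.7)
The plan is to establish the two implications separately.

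For the forward direction, given $\pi\in\widehat G$ with $\tau\subset\pi|_K$, commutativity of the triple combined with Theorem \ref{representation theoretic criteria-general case} forces the multiplicity of $\tau$ in $\pi|_K$ to equal exactly one; in particular the orthogonal projection $P_\tau\colon\cH_\pi\to V_\tau$ is canonical. Taking $\Phi(x)\in\textup{End}(V_\tau)$ as in \eqref{Phi-postype}, essentially $\Phi(x)=P_\tau\pi(x)|_{V_\tau}$, three verifications remain. Bi-$\tau$-equivariance \eqref{def of Hom(V)-valued algebra} is immediate from $\pi(k)|_{V_\tau}=\tau(k)$ together with the commutation $[\pi(k),P_\tau]=0$ (which follows from $K$-invariance of $V_\tau$). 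The functional equation \eqref{functional equation for Phi} reduces, after inserting the definition, to the Schur orthogonality identity
$$
d_\tau\int_K\pi(k)\chi_\tau(k)\,dk=P_\tau,
$$
where multiplicity one is precisely what ensures the right-hand side equals $P_\tau$ rather than a sum over isotypic copies; this yields $\tau$-sphericity via Theorem \ref{characterization of spherical function in terms of functional equation-1}. Positivity of type is the standard matrix-coefficient computation
$$
\sum_{j,k}\langle\Phi(x_jx_k^{-1})v_k,v_j\rangle=\Big\|\sum_k\pi(x_k^{-1})v_k\Big\|^2\ge0,
$$
a direct consequence of unitarity of $\pi$.

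For the converse, given a $\tau$-spherical $\Phi$ of positive type, I would carry out a GNS-style construction. On the space $\cF$ of compactly supported continuous $V_\tau$-valued functions $f$ on $G$ satisfying $f(xk)=\tau(k^{-1})f(x)$, consider the sesquilinear form
$$
\langle f_1,f_2\rangle_\Phi=\int_G\int_G\langle\Phi(xy^{-1})f_1(y),f_2(x)\rangle\,dy\,dx,
$$
which is positive semi-definite by positivity of type of $\Phi$. Quotienting by the null-space and completing yields a Hilbert space $\cH_\pi$; right invariance of Haar measure makes left translation on $\cF$ preserve the form, hence descends to a unitary representation $\pi$ of $G$ on $\cH_\pi$. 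The embedding $V_\tau\hookrightarrow\cH_\pi$ is furnished by mapping $v\in V_\tau$ to the class of a delta-like section at the identity with value $v$ (realized by approximation through bump sections in $\cF$); the normalization $\Phi(e)=I$ granted by Theorem \ref{characterization of spherical function in terms of functional equation-1} makes this an isometry, and \eqref{Phi-postype} then holds by construction.

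The main obstacle will be showing that the representation $\pi$ so constructed is irreducible. This is where the spherical character of $\Phi$ is essential: the functional equation \eqref{functional equation for Phi}, read through the convolution-homomorphism definition of $\tau$-spherical function, exhibits $\Phi$ as a character of the commutative algebra $L^1_{\tau,\tau}(G,\textup{End}(V_\tau))$, and a Schur-type argument then forces the $G$-translates of the embedded copy of $V_\tau$ to span an irreducible subrepresentation. Cyclicity is automatic, since $\Phi$ was built as a matrix coefficient relative to that very copy. Uniqueness of $\pi$ up to unitary equivalence, and the identification of the embedded $V_\tau$ with the unique $\tau$-isotypic component of $\pi|_K$, then follow from the standard correspondence between positive-type functions on $G$ and cyclic unitary representations with a distinguished cyclic vector.
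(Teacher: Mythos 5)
The paper does not actually prove this theorem; it refers to Sakai and to Warner, so there is no internal argument to measure you against, and your two-step strategy (direct verification plus a GNS construction for the converse) is precisely the standard one in those sources. The forward half is essentially right, but the checks you call ``immediate'' are exactly where the paper's conventions bite. With \eqref{Phi-postype} as written, $\Phi(x)=P_\tau\pi(x)|_{V_\tau}$ satisfies $\Phi(k_1xk_2)=\tau(k_1)\Phi(x)\tau(k_2)$, which is the \emph{opposite} variance to \eqref{def of Hom(V)-valued algebra}; to obtain \eqref{def of Hom(V)-valued algebra} and the functional equation \eqref{functional equation for Phi} (whose right-hand side is $\Phi(y)\Phi(x)$, in the reversed order matching the reversed convolution \eqref{def of conv}) one must compress $\pi(x^{-1})$ rather than $\pi(x)$, and the averaging identity you need is $d_\tau\int_K\overline{\chi_\tau(k)}\pi(k)\,dk=P_\tau$, with the conjugate character. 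The same issue recurs in your GNS form: the kernel $\Phi(xy^{-1})$ becomes $\Phi(gxy^{-1}g^{-1})$ under simultaneous left translation of $x$ and $y$, so left translation is \emph{not} unitary for the form as you wrote it; the kernel must be $\Phi(y^{-1}x)$, with the positivity condition \eqref{positive definite function} adjusted accordingly. None of this is fatal, but it has to be sorted out consistently before either direction is actually verified.

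The genuine gap is irreducibility in the converse. The assertion that ``a Schur-type argument forces the $G$-translates of the embedded copy of $V_\tau$ to span an irreducible subrepresentation'' is not an argument: those translates span all of $\cH_\pi$ by construction (that is cyclicity), and what must be proved is that the commutant of $\pi(G)$ is trivial. The standard way to close this requires a lemma you never state or establish: \emph{the $\tau$-isotypic component of $\cH_\pi|_K$ is exactly the embedded $V_\tau$}, i.e.\ $\tau$ occurs with multiplicity one in the GNS representation. This is where the functional equation \eqref{functional equation for Phi} --- equivalently the multiplicativity of $F\mapsto\widehat F(\Phi)$ --- is actually consumed: one shows that the isotypic projection $d_\tau\int_K\overline{\chi_\tau(k)}\pi(k)\,dk$ sends the dense set of translates $\pi(x)v$, $v\in V_\tau$, into $V_\tau$ itself. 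Granted that, any $T$ in the commutant preserves the $\tau$-isotypic component and commutes with $\tau(K)$ on it, hence is a scalar there by Schur's lemma, hence is that scalar everywhere by cyclicity. (Alternatively one can argue, as the paper itself does in the proof of Theorem \ref{positive on nilpotent}, via the fact that a positive character of the commutative $*$-algebra $L^1_{\tau,\tau}\big(G,\textup{End}(V_\tau)\big)$ is a pure state and therefore induces an irreducible representation.) Without one of these mechanisms the Schur step has nothing to act on, and your sketch supplies neither; the rest of your converse --- the isometric embedding of $V_\tau$, recovery of \eqref{Phi-postype}, and uniqueness of $\pi$ --- does follow as you indicate once irreducibility is in place.
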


\medskip
Consider now the case where $G=K\ltimes H$.

\begin{proposition}\label{equivalent condition for positive definite proposition}
Let $F$ be an $\textup{End}(V_\tau)$-valued measurable function on $H$. Then $F$ is of positive type if and only if $T(F)$, defined according to \eqref{fromHtoG}, is of positive type on $G$.
\end{proposition}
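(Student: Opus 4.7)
The plan is to use characterization (4) of positive-definiteness in its integral form: for a measurable $\textup{End}(V_\tau)$-valued function $\tilde F$ on a locally compact group $G$, testing against rank-one $E(x)=\psi(x)\phi^{*}$ in $C_c(G,\textup{End}(V_\tau))$ shows that (4) is equivalent to
$$
\int_G\int_G\langle \tilde F(g'^{-1}g)\Psi(g'),\Psi(g)\rangle\,dg\,dg'\ge 0\qquad \forall\,\Psi\in C_c(G,V_\tau),
$$
and analogously on $H$. Since any $F$ satisfying (4) is automatically continuous and $F(g^{-1})=F(g)^{*}$, the choice of convention $g'^{-1}g$ versus $gg'^{-1}$ is immaterial. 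Both directions of the equivalence will reduce to a single algebraic identity.

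The crux is to compute $T(F)(g'^{-1}g)$ explicitly. For $g=(k,x)$, $g'=(k',x')\in G=K\ltimes H$, the semidirect product law gives
$$
g'^{-1}g=\bigl(k'^{-1}k,\ k'^{-1}\cdot(x'^{-1}x)\bigr),
$$
and combining the definition $T(F)(l,y)=\tau(l)^{-1}F(y)$ with the $\tau$-equivariance \eqref{eqn tau type function} to pull the $K$-action out of $F$, one finds
$$
T(F)(g'^{-1}g)=\tau(k'^{-1}k)^{-1}\,\tau(k'^{-1})F(x'^{-1}x)\tau(k'^{-1})^{-1}=\tau(k)^{-1}F(x'^{-1}x)\tau(k').
$$

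With this identity in hand, unitarity of $\tau$ together with the substitution $\phi(k,x):=\tau(k)\Psi(k,x)$ yields the decisive factorisation
$$
\langle T(F)(g'^{-1}g)\Psi(g'),\Psi(g)\rangle=\langle F(x'^{-1}x)\phi(k',x'),\phi(k,x)\rangle.
$$
The right-hand side depends on $(k,k')$ only through $\phi$, so Fubini and the $K$-invariance of Haar measure on $H$ (automatic since $K$ acts by automorphisms on a unimodular group) give
$$
\int_{G\times G}\langle T(F)(g'^{-1}g)\Psi(g'),\Psi(g)\rangle\,dg\,dg'=\int_{H\times H}\langle F(x'^{-1}x)\tilde\phi(x'),\tilde\phi(x)\rangle\,dx\,dx',
$$
where $\tilde\phi(x):=\int_K\tau(k)\Psi(k,x)\,dk\in V_\tau$.

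The proposition now drops out. For the ``if'' direction, positivity of $F$ on $H$ gives a non-negative right-hand side for every $\Psi$, whence $T(F)$ is of positive type on $G$. For the ``only if'' direction, given any $\psi_0\in C_c(H,V_\tau)$, set $\Psi(k,x):=\tau(k)^{-1}\psi_0(x)$; then $\phi(k,x)=\psi_0(x)$ and $\tilde\phi=\psi_0$ (with Haar measure on $K$ normalised), so positivity of $T(F)$ on $G$ forces the positive-type condition for $F$ on $H$. The only non-routine step is the algebraic identity $T(F)(g'^{-1}g)=\tau(k)^{-1}F(x'^{-1}x)\tau(k')$; everything else is bookkeeping with the factorisation of Haar measure $dg=dk\,dx$.
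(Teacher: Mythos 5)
The paper states Proposition \ref{equivalent condition for positive definite proposition} without proof, so there is nothing to compare against; your argument is the natural one and it is correct. The key identity $T(F)(g'^{-1}g)=\tau(k)^{-1}F(x'^{-1}x)\tau(k')$ checks out against the product law $(k,x)(k',x')=(kk',x(k\cdot x'))$ together with the equivariance \eqref{eqn tau type function} (which, note, you do need and which is implicitly assumed since $F$ is meant to be of type $\tau$), and the reduction to the $H$-integral via $\tilde\phi(x)=\int_K\tau(k)\Psi(k,x)\,dk$, plus the choice $\Psi(k,x)=\tau(k)^{-1}\psi_0(x)$ for the converse, completes both directions.
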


This leads us to the following description of the $L^1_\tau(H)$-spherical functions of positive type only in terms of irreducible unitary representations of $H$, instead of representations of $K\ltimes H$. 

Fix  $\pi\in\widehat{H}$. Let $\cH_\pi\otimes V_\tau=\oplus_{\alpha} W_\alpha(\pi)$ be the multiplicity-free decomposition into irreducible invariant subspaces under the action of $\delta\otimes \tau$, where $\alpha$ runs over an index set $\Lambda=\Lambda(\pi).$ Let $P_\alpha=P_\alpha(\pi)$ denote the orthogonal projection onto $W_\alpha=W_\alpha(\pi)$. If $F\in L^1_\tau(H)$, $\pi(F)\in\mathcal{L}_{K_\pi}(\cH_\pi\otimes V_\tau)$ by Lemma \ref{pi(F) commutes with delta tensor tau}. Therefore $\pi(F)=\oplus_{\alpha}\widehat{F}(\pi,\alpha)P_\alpha$ for some constants $\widehat{F}(\pi,\alpha)$. Since $\pi(F*G)=\pi(F)\pi(G)$, it follows that, for each $\alpha$, $F\rightarrow\widehat{F}(\pi,\alpha)$ defines a multiplicative linear functional of $L^1_\tau(H)$. 

Since for $F\in L^1_\tau(H)$, $\pi(F)$ can be written as 
$$
\pi(F)=\int_K\int_H\big [I\otimes \tau(k^{-1})\big]\big[\pi(k^{-1}\cdot x)\otimes F(x)\big][I\otimes\tau(k)]dxdk,
$$ 
we have 
\begin{eqnarray*}
\widehat{F}(\pi,\alpha)&=&\frac{1}{d_\alpha}\textup{Tr}
[\pi(F)P_\alpha]\\&=&\frac{1}{d_\alpha}\textup{Tr}\bigg[\int_K\int_H\big[\pi(k^{-1}\cdot x)\otimes F(x)\big][I\otimes\tau(k)]P_\alpha\big [I\otimes \tau(k^{-1})\big]dxdk\bigg],
\end{eqnarray*} 

Defining $\textup{Tr}_{\cH_\pi}$ as the partial trace of an element of $\cL(\cH_\pi\otimes V_\tau)$ relative to $\cH_\pi$, and setting
\begin{eqnarray}\label{formula of tau spherical function general case}
\Phi_{\pi,\alpha}(x)&=&\frac{d_\tau}{d_\alpha}\textup{Tr}_{\cH_\pi}\bigg[\int_K\big[\pi(k^{-1}\cdot x)\otimes I\big][I\otimes\tau(k)]P_\alpha\big [I\otimes \tau(k^{-1})\big]dk\bigg],
\end{eqnarray} 
we then have 
\begin{eqnarray*}
\widehat{F}(\pi,\alpha)&=&\frac{1}{d_\alpha}\textup{Tr}\bigg[\int_H\big[I\otimes F(x)\big]\int_K\big[\pi(k^{-1}\cdot x)\otimes I\big][I\otimes\tau(k)]P_\alpha\big [I\otimes \tau(k^{-1})\big]dkdx\bigg]\\&=&\frac{1}{d_\tau} \textup{Tr}\bigg[\int_H F(x)\Phi_{\pi,\alpha}(x)dx\bigg].
\end{eqnarray*} 

This is true for all $F\in L^1_\tau(H)$. Also, note that $\Phi_{\pi,\alpha}\in L^{\infty}_\tau(H)$. Therefore $\Phi_{\pi,\alpha}$ is a $\tau$-spherical function.

In general, not all bounded spherical functions are of positive type. The following case where the two classes coincide is particularly relevant in view of Vinberg's structure theorem for Gelfand pairs \cite[Thm.~5]{V}. The method of proof is taken from \cite{B} and adapted to the nonscalar case.

\begin{theorem}\label{positive on nilpotent}
Let $(K\ltimes H,K,\tau)$ be a commutative triple, with $H$  nilpotent. Then all bounded $\tau$ -spherical functions on $H$ are  of positive type.
\end{theorem}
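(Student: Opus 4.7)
The strategy is to adapt the proof of \cite{B} for the scalar case to the present vector-valued setting. The derivation immediately preceding \eqref{formula of tau spherical function general case} already exhibits, for each $\pi\in\widehat H$ and each irreducible $K_\pi$-invariant subspace $W_\alpha\subset\cH_\pi\otimes V_\tau$ under $\delta\otimes(\tau_{|K_\pi})$, a $\tau$-spherical function $\Phi_{\pi,\alpha}$. Since $\pi$ is a unitary representation of $H$ and $\Phi_{\pi,\alpha}$ is constructed as an $\textup{End}(V_\tau)$-valued matrix coefficient of $\pi$ (averaged over $K_\pi$ against the orthogonal projection $P_\alpha$ onto $W_\alpha$), each $\Phi_{\pi,\alpha}$ is automatically of positive type. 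The proof therefore reduces to showing that every bounded $\tau$-spherical function $\Psi$ on $H$ coincides with some $\Phi_{\pi,\alpha}$.

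I would proceed in two steps: first recover the representation $\pi$ from the trace, then recover the index $\alpha$ from the remaining eigenvalue data. By the lemma at the beginning of Section~\ref{sect:spectra}, the companion $\psi=d_\tau\inv\textup{Tr}\,\Psi$ is a bounded spherical function for the nilpotent Gelfand pair $(K\ltimes H,K)$. Applying the scalar theorem of \cite{B}, $\psi$ is of positive type and, through Kirillov's orbit method for $(K,H)$, corresponds to a unique $K$-orbit $[\pi]\in\widehat H/K$ (equivalently, a coadjoint $K$-orbit in $\fh^*$).

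For the second step, Theorem \ref{spherical-diff} asserts that $\Psi$ is uniquely determined by its joint eigenvalues under a generating set of $\big(\bD(H)\otimes\textup{End}(V_\tau)\big)^K$. By Proposition \ref{finitely generated}, such a generating set can be assembled from a generating set of $\bD_K(H)$ together with a finite module basis over $\bD_K(H)$. The first group of eigenvalues is already pinned down by $[\pi]$ via the match with $\psi$; the multiplicity-free decomposition $\cH_\pi\otimes V_\tau=\bigoplus_\alpha W_\alpha$ of Theorem \ref{Gelfand pair iff multiplicity free} partitions the joint spectrum of the remaining generators, acting on the representation $\pi$ enlarged by $V_\tau$, into finitely many eigenvalue tuples, one per $\alpha$. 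Matching the second group of eigenvalues of $\Psi$ against one of these tuples identifies the correct $\alpha$, and hence $\Psi=\Phi_{\pi,\alpha}$.

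The principal obstacle is ensuring that no bounded $\tau$-spherical function carries eigenvalue data exotic to the family $\{\Phi_{\pi,\alpha}\}$. This is an analytic-algebraic density assertion of the kind treated in \cite{B}: the symbols of generators of $\big(\bD(H)\otimes\textup{End}(V_\tau)\big)^K$ are $K$-equivariant $\textup{End}(V_\tau)$-valued polynomials on $\fh^*$, and on each coadjoint $K$-orbit their admissible evaluations, decomposed according to the $W_\alpha$, must exhaust the eigenvalue tuples realized by bounded spherical functions. Nilpotency of $H$ enters crucially through the tameness and surjectivity of the Kirillov correspondence, together with the polynomial character of the symbol calculus; these are precisely the ingredients that make Benson's argument go through in the scalar setting and that must now be carried over to the vector-valued symbols dictated by~$\tau$.
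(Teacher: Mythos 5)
Your reduction runs in the wrong direction relative to what is available at this point in the paper, and its key step is left unproved. You correctly note that the functions $\Phi_{\pi,\alpha}$ built from unitary $\pi\in\widehat H$ are of positive type, so it would suffice to show that every bounded $\tau$-spherical function $\Psi$ equals some $\Phi_{\pi,\alpha}$. But that identification \emph{is} the hard part, and your last paragraph concedes it (``the principal obstacle'') without supplying an argument: knowing that $\psi=d_\tau\inv\textup{Tr}\,\Psi$ is of positive type and attached to a $K$-orbit $[\pi]$ does not exclude a bounded joint eigenfunction $\Psi$ whose remaining eigenvalues are exotic, i.e.\ not among the finitely many tuples read off from the decomposition $\cH_\pi\otimes V_\tau=\bigoplus_\alpha W_\alpha$. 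The Cayley--Hamilton identity behind Proposition \ref{finitely generated} is a relation between symbols, not between the operators $\Lambda'(P)^d$ and $\Lambda'(P^d)$ (lower-order corrections appear under symmetrization), so even the weaker claim that each remaining eigenvalue lies in a finite set determined by $\psi$ requires an argument; and granting it, one must still show that the tuple realized by $\Psi$ is one actually realized by some invariant subspace $W_\alpha$. Note also that in the paper's own architecture the classification of bounded spherical functions (Theorem \ref{all tau spherical functions}) is deduced \emph{from} the positivity theorem, not the other way around, so your route risks circularity.

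The paper's proof avoids all of this by working directly with the multiplicative functional $\lambda_\Psi\colon F\mapsto\widehat F(\Psi)$ on $L^1_\tau(H)$. Since $H$ is nilpotent, $L^1\big(H,\textup{End}(V_\tau)\big)$ is a symmetric Banach $*$-algebra \cite{P}, and Naimark's theorem \cite{N} then realizes $\lambda_\Psi$ as a vector state $F\mapsto\langle\pi(F)v_0,v_0\rangle$ of an irreducible $*$-representation of the ambient algebra restricted to $L^1_\tau(H)$. Positivity of $\int_H\textup{Tr}\big[(G^**G)(x)\Psi(x^{-1})\big]\,dx$ then follows in two lines after replacing $G$ by its $K$-average $G^\#\in L^1_\tau(H)$, which changes neither the integral nor the positivity. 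If you want to salvage your approach you would essentially have to re-prove this extension/realization step; that is exactly where the nilpotency of $H$ enters, and it cannot be replaced by an appeal to the scalar case applied to the trace.
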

\begin{proof}
Let $\Psi $ be a bounded $\tau$-spherical function on $H$ i.e. the linear map $\lambda_{\Psi }: F\rightarrow\widehat{F}(\Psi )$ is a non-trivial homomorphism of $L^1_\tau(H)$ onto $\mathbb{C}$. Since $H$ is a locally compact nilpotent lie group, by Corollary 6 in \cite{P}, $L^1(H,\textup{End}(V_\tau))$ is symmetric Banach $*$-algebra. By Theorem 1 in \cite{N} (page-305), it follows that, there is an 
irreducible representation $(\pi,\cH_\pi)$ of $L^1(H,\textup{End}
(V_\tau))$ together with a one dimensional subspace $H_{\Psi }$ of 
$\cH_\pi$ such that 
$\big(\pi\mid_{L^1_\tau(H)},H_{\Psi }\big)$ 
is equivalent to $(\lambda_{\Psi },\mathbb{C})$. Therefore, there is 
a vector $v_0\in H_{\Psi }$ such that $\widehat{F}(\Psi )=\langle 
\pi(F)v_0,v_0\rangle$ for all $F\in L^1_\tau(H
)$. Now, we use Proposition \ref{equivalent condition for 
positive definite proposition} to prove that $\Psi $ is  of positive type. Let $G\in C^\infty_c(H,\textup{End}(V_\tau))$. Define 
$$G^\#(x):=\int_K\tau(k^{-1})F(k\cdot x)\tau(k)dk.$$ Then 
\begin{eqnarray*} 
\int_H\textup{Tr}\big[(G^**G)(x)\Psi (x^{-1})\big]dx &=&
\int_H\textup{Tr}\big[(G^**G)^\#(x)\Psi (x^{-1})\big]dx\\ &=&
\int_H\textup{Tr}\big[((G^\#)^**G^{\#})(x)\Psi (x^{-1})\big]dx\\&=& d_\tau\big\langle\pi((G^\#)^**G^{\#})v_0,v_0\big\rangle\\&=& d_\tau\big\langle\pi(G^{\#})v_0, \pi(G^\#)v_0\big\rangle\geq 0.
\end{eqnarray*} 

Therefore $\Psi $ is  of positive type.  
\end{proof}

\vskip.5cm

\section{The case where $H=\bR^n$ or  the Heisenberg group $H_n$. Commutativity criteria}\label{sect:H=Rn,Hn}
\vskip.5cm

Let $K$ be a compact subgroup of $O(n)$ acting naturally on $\mathbb{R}^n$, and  $(\tau,V_\tau)$ be an irreducible unitary representation of $K$. 

The following theorem gives a criterion for $(\mathbb{R}^n,K,\tau)$ to be a commutative triple. 
\begin{theorem}\label{eqn R^n gelfand pair}
Let $K_x=\{k\in K:k\cdot x=x\}$ be the stabilizer of $x\in\mathbb{R}^n$. Then $(K\ltimes\mathbb{R}^n,K,\tau)$ is a commutative triple if and only if, for all $x\in \mathbb{R}^n$, $K_x$ action on $V_\tau$ is multiplicity free. 
\end{theorem}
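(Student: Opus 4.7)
The plan is to apply the general criterion in Theorem \ref{Gelfand pair iff multiplicity free} to the special case $H=\bR^n$, where the representation theory of $H$ is completely explicit. The main work is tracing through the identifications so that the intertwining projective representation $\delta$ becomes trivial, reducing the condition $\delta\otimes(\tau|_{K_\pi})$ multiplicity free to $\tau|_{K_x}$ multiplicity free.

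First, I would parametrize $\widehat{\bR^n}$ by the characters $\pi_\xi(x)=e^{2\pi i\langle\xi,x\rangle}$, $\xi\in\bR^n$, each acting on the one-dimensional Hilbert space $\cH_{\pi_\xi}=\bC$. The natural action of $K\subset O(n)$ on $\widehat{\bR^n}$ reads
$$
\pi_\xi^k(x)=\pi_\xi(k^{-1}\cdot x)=e^{2\pi i\langle k\cdot\xi,x\rangle}=\pi_{k\cdot\xi}(x),
$$
so under the identification $\widehat{\bR^n}\cong\bR^n$ the $K$-action on the dual coincides (via $K\subset O(n)$) with the $K$-action on $\bR^n$. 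Consequently the stabilizer $K_{\pi_\xi}$ equals the isotropy subgroup $K_\xi$.

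Next I would analyze the projective intertwiner $\delta$. For $k\in K_\xi$, the relation $\pi_\xi^k=\pi_\xi$ holds as equal (not merely equivalent) characters, so the defining identity $\pi_\xi^k(x)=\delta(k^{-1})\pi_\xi(x)\delta(k)$ is satisfied by the scalar $\delta(k)=1$ on $\cH_{\pi_\xi}=\bC$. Thus $\delta$ can be chosen to be the trivial representation of $K_\xi$, and under the isomorphism $\cH_{\pi_\xi}\otimes V_\tau\cong V_\tau$ we have $\delta\otimes(\tau|_{K_\xi})\cong \tau|_{K_\xi}$.

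Applying Theorem \ref{Gelfand pair iff multiplicity free}, $(K\ltimes\bR^n,K,\tau)$ is a commutative triple if and only if, for every $\xi\in\bR^n$, $\tau|_{K_\xi}$ is multiplicity free; equivalently (since the $K$-actions on $\bR^n$ and $\widehat{\bR^n}$ are identified and every $x\in\bR^n$ occurs as some $\xi$), for every $x\in\bR^n$, $\tau|_{K_x}$ is multiplicity free. The only subtlety is checking that we really have enough freedom to take $\delta$ trivial despite the fact that $\delta$ is defined only up to a scalar; but since $\cH_{\pi_\xi}$ is one-dimensional any normalization yields the same decomposition of $V_\tau$ under $K_\xi$, so no multiplicities are affected. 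This is the main (only) conceptual point in the argument.
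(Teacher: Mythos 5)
Your argument is correct, and it is precisely the alternative route that the paper acknowledges but deliberately does not take: right after the statement, the authors remark that Theorem \ref{eqn R^n gelfand pair} ``can be shown to be a consequence of Theorem \ref{Gelfand pair iff multiplicity free}, but it admits a direct proof.'' Your reduction is sound: the characters $\pi_\xi(x)=e^{i\langle\xi,x\rangle}$ exhaust $\widehat{\bR^n}$, the $K$-action on the dual matches the action on $\bR^n$ because $K\subset O(n)$, the stabilizers coincide, and on a one-dimensional representation space the intertwiner $\delta$ may be taken trivial (and, as you note, the projective ambiguity is a harmless scalar that cannot affect multiplicities in $\delta\otimes(\tau|_{K_\xi})\cong\tau|_{K_\xi}$). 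The paper instead argues directly on the Fourier transform side: it shows $\cF$ maps $L^1_\tau(\bR^n)$ onto a dense subalgebra of $(C_0)_\tau(\bR^n)$ with pointwise product, observes that equivariance forces $\widehat F(\xi)\in\textup{End}_{K_\xi}(V_\tau)$, which is a commutative algebra exactly when $\tau|_{K_\xi}$ is multiplicity free, and for the converse constructs (Lemma \ref{existance of tau type function}, via slices) a compactly supported equivariant $F$ with $F(x_0)$ equal to any prescribed element of $\textup{End}_{K_{x_0}}(V_\tau)$. What your approach buys is brevity, at the cost of invoking the full Mackey-theoretic machinery behind Theorem \ref{Gelfand pair iff multiplicity free}; what the paper's approach buys is a self-contained elementary proof plus two reusable by-products: the slice lemma itself and the explicit diagonalization $\widehat F(\xi)=\oplus_j\widehat F_j(\xi)P_{j,\xi}$, which is exactly what is used later in Section \ref{sect:sphericalRnHn} to write down the Bessel-type spherical functions.
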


The condition ``for all $x\in \mathbb{R}^n$'' can be replaced by ``for generic $x\in \mathbb{R}^n$''.
Theorem \ref{eqn R^n gelfand pair} can be shown to be a consequence of Theorem \ref{Gelfand pair iff multiplicity free}, but it admits a direct  proof, based on following  lemma.
We denote by $\textup{End}_{K_{x_0}}(V_\tau)$ the elements of $\textup{End}(V_\tau)$ which commute with $\tau_{|_{K_{x_0}}}$.

\begin{lemma}\label{existance of tau type function}
Let $x_0\in\mathbb{R}^n$ and $A\in\textup{End}_{K_{x_0}}(V_\tau)$. Then there is a compactly supported function $F\in C_\tau(\mathbb{R}^n)$ such that $F(x_0)=A$.
\end{lemma}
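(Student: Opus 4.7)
The plan is to first define $F$ on the $K$-orbit of $x_0$ by the formula forced by the equivariance condition, then extend it to all of $\mathbb{R}^n$ while preserving the equivariance via an averaging argument.

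On the orbit $K\cdot x_0$, set $F(k\cdot x_0)=\tau(k)A\tau(k^{-1})$. For this to be well-defined, I need to check that if $k_1\cdot x_0=k_2\cdot x_0$, then $\tau(k_1)A\tau(k_1^{-1})=\tau(k_2)A\tau(k_2^{-1})$, which rearranges to $\tau(k_2^{-1}k_1)A=A\tau(k_2^{-1}k_1)$. Since $k_2^{-1}k_1\in K_{x_0}$ and $A\in\textup{End}_{K_{x_0}}(V_\tau)$, this holds. The map $k\mapsto \tau(k)A\tau(k^{-1})$ is continuous on $K$ and descends to the quotient $K/K_{x_0}\cong K\cdot x_0$, so $F$ is continuous on the compact orbit.

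Next, apply the Tietze extension theorem componentwise (choosing a basis of $V_\tau$) to extend $F|_{K\cdot x_0}$ to a continuous $\textup{End}(V_\tau)$-valued function $\widetilde F$ on $\mathbb R^n$; multiplying by a continuous compactly supported scalar cutoff that equals $1$ on some neighborhood of $K\cdot x_0$, I may assume $\widetilde F$ is continuous and compactly supported, still with $\widetilde F|_{K\cdot x_0}=F|_{K\cdot x_0}$. Now average over $K$ by setting
$$
F(x)=\int_K\tau(k^{-1})\widetilde F(k\cdot x)\tau(k)\,dk.
$$
A change of variable $k\mapsto kk'^{-1}$ verifies the equivariance $F(k'\cdot x)=\tau(k')F(x)\tau(k'^{-1})$, so $F\in C_\tau(\mathbb R^n)$; the support of $F$ is contained in $K\cdot\textup{supp}(\widetilde F)$, which is compact since $K$ is. Finally, evaluating at $x_0$, using $k\cdot x_0\in K\cdot x_0$ and the formula defining $F$ there,
$$
F(x_0)=\int_K\tau(k^{-1})\bigl[\tau(k)A\tau(k^{-1})\bigr]\tau(k)\,dk=\int_K A\,dk=A.
$$

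No step is really an obstacle, since everything reduces to standard facts (compactness of the orbit, Tietze, compactness of $K$); the only point requiring care is the well-definedness on the orbit, which is exactly where the hypothesis $A\in\textup{End}_{K_{x_0}}(V_\tau)$ enters.
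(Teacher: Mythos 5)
Your proof is correct. The two arguments agree on the forced definition $F(k\cdot x_0)=\tau(k)A\tau(k^{-1})$ on the orbit and on the fact that well-definedness there is exactly where $A\in\textup{End}_{K_{x_0}}(V_\tau)$ is used, but they diverge on how the extension off the orbit is produced. The paper invokes the existence of a slice $S_{x_0}$ at $x_0$ (citing Bredon), identifies a tubular neighborhood of the orbit with $K\times_{K_{x_0}}S_{x_0}$, and writes down the extension explicitly as $\psi(x)\tau(k)A\tau(k^{-1})$ with a $K_{x_0}$-invariant cutoff $\psi$; the equivariance is then built into the formula and no averaging is needed. You instead take an arbitrary Tietze extension with compact support and restore equivariance by averaging $\tau(k^{-1})\widetilde F(k\cdot x)\tau(k)$ over $K$, checking that the average still takes the value $A$ at $x_0$ because $\widetilde F$ already has the correct equivariant values on the orbit. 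Your route is more elementary (it needs only compactness of the orbit, Tietze, and Haar measure, not the slice theorem) and would work verbatim for a continuous action of a compact group on any reasonable space; the paper's route buys a more explicit function, supported in a prescribed tube around the orbit and smooth if $\psi$ is chosen smooth, which is in the spirit of the orbit-type analysis used elsewhere in that section. All the individual steps of your argument --- well-definedness and continuity on $K/K_{x_0}\cong K\cdot x_0$, compactness of $K\cdot\textup{supp}(\widetilde F)$, the change of variable giving equivariance, and the evaluation at $x_0$ --- are sound.
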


\begin{proof}
Let $S_{x_0}$ be a slice at $x_0$. For the existence of slices, cf. \cite[Ch.~2,\,Section~5]{bre}.
 By definition, $S_{x_0}$ is an open and $K_{x_0}$-invariant neighborhood of $0$ in  the normal space $N_{x_0}$ to the orbit $K\cdot x_0$ at $x_0$, such that the $K$-equivariant map 
 $$
 \sigma:K\times_{K_{x_0}}S_{x_0}\rightarrow \mathbb{R}^n,
 $$ 
 given by $\sigma(k,x)=k(x_0+x)$, is a diffeomorphism of $K\times_{K{x_0}}S_{x_0}$ onto the open neighborhood $K(x_0+S_{x_0})$ of $K\cdot{x_0}$. 
Here the notation $K\times_{K_{x_0}}S_{x_0}$ stands for the quotient of $K\times S_{x_0}$ modulo the action of $K_{x_0}$, i.e., $(kk^\prime,x)$ is equivalent to $(k,k^\prime x)$ for all $k^\prime\in K_{x_0}, x\in S_{x_0}$. Also, cf. \cite[Corollary 5.3]{FRY}, for every $x\in S_{x_0}$, we have the inclusion $K_x\subset K_{x_0}$, more explicitly $K_x=(K_{x_0})_x$.

Fix a $K_{x_0}$-invariant scalar-valued function $\psi\in C_c(S_{x_0})$  with $\psi(x_0)=1$, and define 
$$
F:K\times_{K_{x_0}}S_{x_0}\rightarrow\textup{End}(V_\tau)
$$ 
by 
$$
F(k,x)=\psi(x)\tau(k)A\tau(k^{-1}).
$$ 

This is well defined because, for $k\in K,k^{\prime}\in K_{x_0},x\in S_{x_0}$, $F(kk^{\prime},x)=F(k,k^\prime x)$, since $A\in\textup{End}_{K_{x_0}}(V_\tau)$ and $\psi$ is $K_{x_0}$-invariant. Clearly, $F$ is continuous. Also note that $F(k_1k_2,x)=\tau(k_1)F(k_2,x)\tau(k_1)^{-1}$. 
\end{proof}

\begin{proof}[Proof of Theorem \ref{eqn R^n gelfand pair}]
For $F\in L^1(\bR^n)\otimes\textup{End}(V_\tau)$, define its Fourier transform $\cF F=\widehat F\in C_0(\bR^n)\otimes\textup{End}(V_\tau)$ via the ordinary Fourier integral. Then 
$$
\cF:L^1_{\tau}(\mathbb{R}^n)\longrightarrow \big(C_0(\bR^n)\otimes\textup{End}(V_\tau)\big)^K:=(C_0)_{\tau}(\mathbb{R}^n)\ ,
$$
with dense image. Moreover, $\widehat{F_1*F_2}=\widehat{F}_1\widehat{F}_2$. So, it is enough to show that $(C_0)_{\tau}(\mathbb{R}^n)$ is commutative under pointwise product if and only if, for each $x\in\mathbb{R}^n$, $K_x$ acts on $V_\tau$ without multiplicities.

Assume that, for each $x$, $K_x$ acts on $V_\tau$ without multiplicities. Then $\textup{End}_{K_x}(V_\tau)$ is commutative. If $F\in (C_0)_{\tau}(\mathbb{R}^n)$, $F(k\cdot x)=\tau(k)F(x)\tau(k^{-1})$ for all $k\in K$, which implies that $F(x)\in \textup{End}_{K_x}(V_\tau)$. Hence $(C_0)_\tau(\mathbb{R}^n)$ is commutative.

Conversely, suppose that $(C_0)_\tau(\mathbb{R}^n)$ is commutative. Given $x\in\bR^n$ and $A,B\in \textup{End}_{K_x}(V_\tau)$, by Lemma \ref{existance of tau type function} there exist $F,G\in (C_c)_\tau(\bR^n)$ with $F(x)=A$, $G(x)=B$. This implies that $AB=BA$.
\end{proof}

\medskip

Let now $H_n$ be the $(2n+1)$ dimensional Heisenberg group, identified with $\bC^n\times\bR$ with product
$$
(z,t)(z',t')=\Big(z+z',t+t'+\half\lan z,z'\ran\Big)\ ,
$$
where $\lan z,z'\ran$ denotes the natural Hermitian product on $\bC^n$. For $k\in U(n)$, 
$$
k\cdot(z,t)=(kz,t)
$$
is an automorphism of $H_n$, and, up to conjugation,  $U(n)$ is the unique maximal compact and connected subgroup of   $\textup{Aut}(H_n)$.

Given a compact subgroup $K$ of $U(n)$ and $\tau\in \widehat K$, we have the following necessary and sufficient condition  for $(K\ltimes H_n,K,\tau)$ to be a commutative triple. By $\cP_{\rm hol}(\bC^n)=\bC[z_1,\dots,z_n]$ we denote the space of holomorphic polynomials on $\bC^n$ and by $\sigma$ be the representation of $U(n)$ on $\cP_{\rm hol}(\bC^n)$ given by $\sigma(k)P=P\circ k\inv$.

\begin{proposition}\label{multfreeH_n}
Given a compact subgroup $K$ of $U(n)$ and $\tau\in \widehat K$, the triple $(K\ltimes H_n,K,\tau)$ is commutative if and only if $\sigma_{|_K}\otimes \tau$ and $\sigma'_{|_K}\otimes\tau$ decompose without multiplicities.
\end{proposition}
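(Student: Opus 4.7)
My plan is to apply Theorem \ref{Gelfand pair iff multiplicity free}, which reduces commutativity of $(K \ltimes H_n, K, \tau)$ to checking, for every $\pi \in \widehat{H_n}$, that the projective representation $\delta \otimes (\tau|_{K_\pi})$ on $\cH_\pi \otimes V_\tau$ decomposes without multiplicities. The first step is to invoke the Stone--von Neumann classification: $\widehat{H_n}$ consists of the infinite-dimensional Bargmann--Fock representations $\pi_\lambda$ ($\lambda \in \bR \setminus \{0\}$), together with the one-dimensional characters $\chi_w$ ($w \in \bC^n$) that factor through $H_n / Z(H_n) \simeq \bC^n$. The argument then organizes naturally according to these two families.

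For the Bargmann--Fock representations I would use the Fock-space model: for $\lambda > 0$, $\cH_{\pi_\lambda}$ is the Gaussian-weighted completion of $\cP_{\rm hol}(\bC^n)$, while for $\lambda < 0$ it is the analogous completion of $\cP_{\rm antihol}(\bC^n)$. Since $K \subseteq U(n)$ acts on $H_n$ by automorphisms preserving both the central character and the Hermitian structure on $\cH_{\pi_\lambda}$, one has $K_{\pi_\lambda} = K$ for every $\lambda$, and a direct computation identifies the required projective intertwiner $\delta_\lambda(k)$ as the geometric action $f \mapsto f \circ k\inv$. Consequently $\delta_\lambda|_K \simeq \sigma|_K$ for $\lambda > 0$ and $\delta_\lambda|_K \simeq \sigma'|_K$ for $\lambda < 0$, and the multiplicity-free requirement of Theorem \ref{Gelfand pair iff multiplicity free} applied to $\pi_\lambda$ is precisely the stated condition on $\sigma|_K \otimes \tau$ (for $\lambda > 0$) or $\sigma'|_K \otimes \tau$ (for $\lambda < 0$). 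This already furnishes the ``only if'' direction.

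For the characters $\chi_w$ the representation space is $\bC$ and $\delta$ is trivial, so $K_{\chi_w} = K_w$ and the multiplicity-free condition reduces to $\tau|_{K_w}$ being multiplicity-free for every $w \in \bC^n$---exactly the criterion of Theorem \ref{eqn R^n gelfand pair} for the triple $(K \ltimes \bR^{2n}, K, \tau)$. Completing the ``if'' direction thus requires deducing this character condition from the two hypotheses on $\sigma|_K \otimes \tau$ and $\sigma'|_K \otimes \tau$, and this is where I expect the main obstacle to lie. I would attempt it by combining the $K$-equivariant restriction from holomorphic and antiholomorphic polynomials to the orbit $K \cdot w \simeq K/K_w$ with the Peter--Weyl decomposition of $L^2(K/K_w) \otimes V_\tau$, so that a hypothetical failure of multiplicity-freeness of $\tau|_{K_w}$ for some $w$ propagates to an extra multiplicity in one of the assumed Fock-space decompositions. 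Alternatively, one can bypass the character case entirely via the Plancherel theorem for $H_n$: since Plancherel measure is supported on $\{\pi_\lambda\}_{\lambda \neq 0}$, pairwise commutativity of the operators $\pi_\lambda(F), \pi_\lambda(G)$ for all $\lambda \neq 0$ and all $F, G \in L^1_\tau(H_n)$ already forces $F * G = G * F$ throughout $L^1_\tau(H_n)$, yielding commutativity of the triple.
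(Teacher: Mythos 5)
Your proposal is correct and follows essentially the same route as the paper: apply Theorem \ref{Gelfand pair iff multiplicity free}, note that in the Fock--Bargmann model $K_{\pi_\lambda}=K$ and the intertwining representation $\delta$ is linear and equals $\sigma_{|_K}$ for $\lambda>0$ and $\sigma'_{|_K}$ for $\lambda<0$. The paper merely asserts that ``it suffices to restrict oneself to the infinite-dimensional representations'' $\pi_\lambda$; your second, Plancherel-type justification (injectivity of $F\mapsto(\pi_\lambda(F))_{\lambda\ne0}$ on $L^1(H_n)$, via the partial Fourier transform in the central variable and injectivity of the Weyl transform) correctly fills in that step, making your first, incomplete Peter--Weyl attempt on the characters unnecessary.
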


\begin{proof}
In applying Theorem \ref{Gelfand pair iff multiplicity free} to our case, it suffices to restrict oneself to the infinite-dimensional representations $\pi_\la\in\widehat H$ associated to nontrivial characters $e^{i\la t}$ of the center of $H_n$. 

In the Fock-Bargmann model, the representation space consists of holomorphic functions on $\bC^n$ which are square integrable with respect to the measure $e^{-|\lambda||z|^2}dzd\bar{z}$; and polynomials are dense in it. Moreover, the stabilizer $K_{\pi_\la}$ is the full group $K$ and the representation $\delta$ described in the proof of Theorem \ref{Gelfand pair iff multiplicity free} is linear and coincides with $\sigma_{|_K}$ for $\la>0$ and with $\sigma'_{|_K}$ for $\la<0$.

The conclusion is then immediate.
\end{proof}

\begin{corollary}
The triples $\big(SO(n)\ltimes\bR^n,SO(n),\tau\big)$, $\big(U(n)\ltimes H_n,U(n),\tau\big)$ are commutative for every $\tau\in\widehat{K}$.

For the triples $\big(SU(n)\ltimes\bC^n,SU(n),\tau\big)$,  $\big(SU(n)\ltimes H_n,SU(n),\tau\big)$ the following  are equivalent:
\begin{enumerate}
\item[\bf(i)] it is commutative,
\item[\bf(ii)] the highest weight $\mu$ of $\tau\in\widehat{SU(n)}$ is singular, i.e. is annihilated by some root,
\item[\bf(iii)] condition \eqref{eqn tau type function} on an integrable $\textup{End}(V_\tau)$-valued function $F$ implies the identity $F(e^{i\theta} z)=F(z)$, resp. $F(e^{i\theta} z,t)=F(z,t)$, i.e., the $SU(n)$-action on the bundle extends to $U(n)$.
\end{enumerate}
\end{corollary}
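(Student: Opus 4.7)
The plan is to reduce each of the four commutativity questions to the multiplicity-free criteria of Theorem~\ref{eqn R^n gelfand pair} and Proposition~\ref{multfreeH_n}, and then translate those criteria into highest-weight combinatorics.

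I would first dispose of the two unconditional cases. For $(SO(n)\ltimes\bR^n, SO(n), \tau)$, the stabilizer of $0$ is the whole $SO(n)$ and the stabilizer of any nonzero point is a conjugate of $SO(n-1)$; the classical Gelfand--Tsetlin branching $SO(n)\downarrow SO(n-1)$ is multiplicity-free, so Theorem~\ref{eqn R^n gelfand pair} applies. For $(U(n)\ltimes H_n, U(n), \tau)$ I would decompose $\sigma$ into its homogeneous components $\cP_{\rm hol}=\bigoplus_{m\ge 0}\cP^m_{\rm hol}$, each the irreducible $U(n)$-module of highest weight $(m,0,\ldots,0)$. By Pieri's rule every $\cP^m_{\rm hol}\otimes\tau$ is $U(n)$-multiplicity-free, and since the centre of $U(n)$ acts on $\cP^m_{\rm hol}$ by $e^{-im\theta}$ the pieces for different $m$'s carry distinct central characters and cannot share irreducible constituents. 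The same reasoning applies to $\sigma'$, and Proposition~\ref{multfreeH_n} yields commutativity.

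For the $SU(n)$ triples I would prove (i)$\Leftrightarrow$(ii) and (i)$\Leftrightarrow$(iii) in turn. Normalize the highest weight of $\tau$, viewed as a $U(n)$-representation, to $\mu=(\mu_1\ge\cdots\ge\mu_n=0)$. The $U(n)\downarrow U(n-1)$ branching is multiplicity-free, indexed by interlacing sequences $\nu$ with $\mu_i\ge\nu_i\ge\mu_{i+1}$, and two such $\nu,\nu'$ yield the same $SU(n-1)$-irreducible iff $\nu'=\nu+c(1,\ldots,1)$ for some integer $c$. A nonzero shift $c=\pm 1$ preserves the interlacing iff every slack $\mu_i-\mu_{i+1}\ge 1$, i.e.\ iff $\mu$ is regular. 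Combined with the fact that $SU(n-1)$ is the stabilizer of $e_1$ in $SU(n)$, Theorem~\ref{eqn R^n gelfand pair} gives (i)$\Leftrightarrow$(ii) for $(SU(n)\ltimes\bC^n,SU(n),\tau)$. The $H_n$ case runs in parallel: Pieri makes each $S^m\otimes\tau$ multiplicity-free, and a collision of $SU(n)$-irreducibles across distinct $m$'s arises from a valid shift $\nu\mapsto\nu-c(1,\ldots,1)$ (with $m\mapsto m-nc$), which exists for $c=1$ iff $\mu$ is regular; duality handles $\sigma'$ since $\mu$ is singular iff the dual weight $\mu^*$ is.

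For (i)$\Leftrightarrow$(iii) I would exploit the 1-parameter subgroup $k_\theta:=\textup{diag}(e^{i\theta}, e^{-i\theta/(n-1)},\ldots,e^{-i\theta/(n-1)})\subset SU(n)$, which satisfies $k_\theta\cdot e_1=e^{i\theta}e_1$ and centralizes $SU(n-1)$. Equivariance gives $F(e^{i\theta}e_1)=\tau(k_\theta)F(e_1)\tau(k_\theta)^{-1}$, while by the $\bC^n$ analogue of Lemma~\ref{existance of tau type function} the value $F(e_1)$ attains every element of $\textup{End}(V_\tau)^{SU(n-1)}$; hence (iii) is equivalent to $\tau(k_\theta)$ lying in the centre of this commutant. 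Decomposing $\tau|_{SU(n-1)}=\bigoplus_\nu V_\nu\otimes M_\nu$ and reading the action of $k_\theta$ from the $U(n)\downarrow U(1)\times U(n-1)$ branching, distinct $U(n-1)$-lifts of the same $SU(n-1)$-type are related by the central shift $(1,\ldots,1)$ and produce eigenvalues of $\tau(k_\theta)|_{M_\nu}$ differing by the nontrivial factor $e^{-in\theta}$. Thus $\tau(k_\theta)$ is central in the commutant iff every $M_\nu$ is one-dimensional, i.e.\ iff (i) holds; the $H_n$ version is identical, since $K$ fixes the centre of $H_n$. The main obstacle I expect is precisely this eigenvalue bookkeeping together with the double-index analysis of $SU(n)$-collisions in the $H_n$ Pieri decomposition, both of which hinge on careful weight arithmetic.
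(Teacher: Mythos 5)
Your argument is correct and rests on the same basic machinery as the paper (Theorem \ref{eqn R^n gelfand pair}, Proposition \ref{multfreeH_n}, Pieri's rule, and the observation that singularity of $\mu$ is exactly what blocks the shift by $(1,\dots,1)$), but two of your steps take genuinely different routes. For ``singular $\Rightarrow$ commutative'' in the $\bC^n$ case the paper does not argue via branching at all: it sets $G_j(z,t)=F_j(z)\eta(t)$, invokes the already-established $H_n$ case to get $G_1*G_2=G_2*G_1$, and integrates out $t$; you instead run the $U(n)\downarrow U(n-1)$ interlacing analysis in both directions, which is self-contained and arguably cleaner at the cost of redoing the weight combinatorics. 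The larger divergence is in (i)$\Leftrightarrow$(iii): the paper works on the Fock-model side, showing that under (i) each irreducible constituent of $\sigma_{|_{SU(n)}}\otimes\tau$ lies in a single homogeneous piece $\cP^m_{\rm hol}$, so that $\pi_\lambda(F)$ commutes with the centre of $U(n)$ acting through $\sigma\otimes\tilde\tau$ and hence $F\in L^1_{\tilde\tau}(H_n)$, while you argue pointwise on the base, using the circle $k_\theta\subset SU(n)$ that scales $e_1$, the surjectivity of $F\mapsto F(e_1)$ onto $\textup{End}_{SU(n-1)}(V_\tau)$ coming from Lemma \ref{existance of tau type function}, and an eigenvalue computation on the multiplicity spaces. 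Your version treats $\bC^n$ and $H_n$ uniformly and makes the reverse implication (iii)$\Rightarrow$(i) fully explicit, where the paper only remarks that its argument ``can be easily reversed.'' One small point of bookkeeping: for $H_n$ your chain really establishes (iii)$\Leftrightarrow$(ii), since multiplicity-freeness of $\tau_{|_{SU(n-1)}}$ is not literally condition (i) there (which concerns $\sigma_{|_{SU(n)}}\otimes\tau$); the equivalence with (i) then closes through (i)$\Leftrightarrow$(ii), which you have already proved, so the logic is sound but worth stating.
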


\begin{proof}
The first part is a restatement of the well-known fact that $\big(SO(n)\ltimes\bR^n,SO(n)\big)$, $\big(U(n)\ltimes H_n,U(n)\big)$ are strong Gelfand pairs \cite[Thm. 3]{Y}.  On the other hand, the 
first case follows directly from Theorem \ref{eqn R^n gelfand pair} applying the branching rules for restrictions of representations of classical groups \cite{FH}, and the second can be proved by  modifying slightly  the argument below.

To prove the part of the statement  concerning $H=H_n$ and $K=SU(n)$,  we disregard the trivial case $n=1$ and assume that $n\ge2$.
We have
\begin{equation}\label{m-decomposition}
\sigma_{|_{SU(n)}}\otimes \tau=\sum_{m=0}^\infty \sigma_m\otimes\tau,
\end{equation}
where $\sigma_m(k)$ denotes the restriction of $\sigma_{|_{SU(n)}}(k)$ to the space $\cP^m_{\rm hol}(\bC^n)$ of holomorphic polynomial homogeneous of degree~$m$.

Let $\la_1,\dots,\la_{n-1}$ the simple positive roots in decreasing order. The highest weight $\mu$ is identified by the $(n-1)$-tuple $a=(a_1,\dots,a_{n-1})$ with $a_j=\la_j(\mu)\ge0$. We then write $\mu=\mu_a$.
By Pieri's formula \cite{FH},
$$
\sigma_m\otimes\mu_a=\sum_{b\in B_m}\mu_b,
$$
where $B_m$ is the set of $(n-1)$-tuple $b=(b_1,\dots,b_{n-1})$
with 
$$
b_j=a_j+c_j-c_{j+1}\ , \qquad \sum_1^nc_j=m\ ,\qquad  0\le c_{j+1}\le a_j\text{ for }j=1,\dots,n-1.
$$

Assume that the same $b$ is obtained from two choices $c_1,\dots,c_n$ with $\sum c_j=m$, and $c'_1,\dots,c'_n$  with $\sum c'_j=m'$. This is equivalent to saying that the differences $c_j-c'_j$ do not depend on $j$.

If $\tau$ is singular, at least one of the $a_j$ is 0 and this forces the equalities $c'_j=c_j$ for every $j$,  $m'=m$. Hence $\sigma_{|_{SU(n)}}\otimes \tau$ splits without multiplicities. Moreover, $\tau'$ is also singular and the same conclusion holds for $\sigma'_{|_{SU(n)}}\otimes \tau=(\sigma_{|_{SU(n)}}\otimes \tau')'$.

On the other hand, if $\tau$ is regular, then $a_j\ge1$ for every $j$, and the choices $m=c_j=0$ and $m'=n$, $c'_j=1$ show that $\tau$ is contained both in $\sigma_0\otimes\tau$ and in $\sigma_n\otimes\tau$.
This gives the equivalence between (i) and (ii). 

Assume now that (i) holds. By Proposition \ref{multfreeH_n} and formula \eqref{m-decomposition}, each irreducible component of $\sigma_{|_{SU(n)}}\otimes \tau$ is contained in $\sigma_m\otimes\tau$.
On the other hand, condition (ii) implies that, if $\chi$ is any character of $U(1)$ which extends $\tau$ from the center of $SU(n)$, for every  $F\in L^1_\tau(H_n)$, $\pi_\lambda(F)$ commutes with $\sigma(e^{i\theta}I)\otimes \chi$ for  $\lambda>0$, and with $\sigma'(e^{i\theta}I)\otimes \chi$ for  $\lambda<0$. Since $\sigma(e^{i\theta}I)=e^{im\theta}I$ on $\cP^m_{\rm hol}(\bC^n)$, we have that, denoting by $\tilde\tau$ the representation of $U(n)$ given by
$$
\tilde\tau(e^{i\theta}k)=\chi(e^{i\theta})\tau(k)\ ,\qquad k\in SU(n),
$$
 for every $k\in U(n)$, $\pi_\lambda(F)$ commutes with $\sigma\otimes\tilde\tau$ for $\lambda>0$ and with $\sigma'\otimes\tilde\tau$ for $\lambda<0$. Hence $F\in L^1_{\tilde\tau}(H_n)$ and, in particular,
 $$
 F(e^{i\theta}z,t)=\tilde\tau(e^{i\theta}I)F(z,t)\tilde\tau(e^{-i\theta}I)=F(z,t).
 $$

This argument can be easily reversed to give the opposite implication.

Finally, if $H=\bC^n$ and $K=SU(n)$, the stabilizer of any $z\ne0$ is $SU(n-1)$. 

Assume that the highest weight $\mu$ of $\tau$ is regular, so that $a_j=\la_j(\mu)>0$ for every $j$. Let $\tau^\sharp$ be the extension of $\tau$ to $U_n$ with highest weight 
$$
\mu^\sharp=(\mu_1,\mu_2,\dots,\mu_{n-1},0)=(a_1+\cdots+a_{n-1}\,,\,a_2+\cdots+a_{n-1}\,,\,\dots\,,\,a_{n-1},0)\ .
$$

Another application of Pieri's formula, cf. \cite[p. 80]{FH}, shows that $\tau^\sharp_{|_{U(n-1)}}$ contains both representations with highest weights
$$
(\mu_1,\mu_2,\dots,\mu_{n-1})\ ,\qquad (\mu_1-1,\mu_2-1,\dots,\mu_{n-1}-1)\ ,
$$
which have equivalent restrictions to $SU(n-1)$. By Theorem \ref{eqn R^n gelfand pair}, the triple is not commutative for such $\tau$. 

Conversely, assume that the highest weight of $\tau $ is singular. Given two functions $F_1,F_2\in L^1_\tau(\bC^n)$, let $G_j(z,t)=F_j(z)\eta(t)$, $j=1,2$, with $\eta\in L^1(\bR)$ with integral 1. Then $G_1,G_2\in L^1_\tau(H_n)$, so that $G_1*G_2=G_2*G_1$. But
$$
\int_\bR G_i*G_j(z,t)\,dt=F_i*F_j(z)\ ,
$$
hence $F_1$ and $F_2$ commute.
\end{proof}

Notice that the implication (i)$\Rightarrow$(iii) holds for every $K\subset U(n)$, with $U(n)$ replaced by $K\,U(1)$.

\section{The case where $H=\bR^n$ or $H_n$. Spherical functions}\label{sect:sphericalRnHn}

Let $(\mathbb{R}^n,K,\tau)$ be a commutative triple. For fixed $\xi\in\mathbb{R}^n$, let $V_\tau=\oplus _{j=1}^{l(\xi)}V_{j,\xi}$ be the multiplicity-free decomposition of $V_\tau$ under the action of $K_{\xi}$.
Let $P_{j,\xi}$ denote the projection onto $V_{j,\xi}$ with respect to this decomposition, and $d_j$ the dimension of $V_{j,\xi}$. 
If $F\in L^1_{\tau}(\mathbb{R}^n)$ then $\widehat{F}\in (C_0)_\tau(\mathbb{R}^n)$, and hence $\widehat{F}(\xi)\in\textup{Hom}_{K_\xi}(V_\tau)$. Therefore  $\widehat{F}(\xi)=\oplus_{j=1}^{l(\xi)}\widehat{F}_j(\xi)P_{j,\xi}$ for some scalars $\widehat{F}_j(\xi)$. Since $\widehat{F*G}=\widehat{F}\widehat{G}$, it follows that, for each $j$, $F\rightarrow \widehat{F}_j(\xi)$ defines a multiplicative linear functional of $L^1_\tau(\mathbb{R}^n)$. 
But 
\begin{eqnarray*}
\widehat{F}_j(\xi)&=&\frac{1}{d_j}\textup{Tr}\big[P_{j,\xi}\widehat{F}(\xi)\big]\\&=&\frac{1}{d_j}\textup{Tr}\bigg[P_{j,\xi}\int_{\mathbb{R}^n}e^{i\xi\cdot x}F(x)dx\bigg]\\&=&\frac{1}{d_j}\int_{\mathbb{R}^n}\textup{Tr}\bigg[e^{i\xi\cdot x}P_{j,\xi}F(x)\bigg]dx\\&=&\frac{1}{d_j}\int_{K}\int_{\mathbb{R}^n}\textup{Tr}\bigg[e^{i\xi\cdot (k\cdot x)}P_{j,\xi}F(k\cdot x)\bigg]dxdk.
\end{eqnarray*}
Since $F(k\cdot x)=\tau(k)F(x)\tau(k)^{-1}$ and $\textup{Tr}(AB)=\textup{Tr}(BA)$, we get
\begin{equation}\label{F_j hat}
\begin{aligned}
\widehat{F}_j(\xi)&=\frac{1}{d_j}\int_{K}\int_{\mathbb{R}^n}\textup{Tr}\bigg[e^{i\xi\cdot (k\cdot 
x)}\tau(k^{-1})P_{j,\xi}\tau(k)F(x)\bigg]dxdk\\
&=\frac{1}{d_\tau}\int_{\mathbb{R}^n}\textup{Tr}[\Phi_{\xi,j}(-x)F(x)]dx,
\end{aligned}
\end{equation}
 where
\begin{eqnarray}\label{eqn tau spherical function} 
\Phi_{\xi,j}(x)=\frac{d_\tau}
{d_j}\int_K e^{-i\xi\cdot (k\cdot x)}\tau(k^{-1})P_{j,\xi}\tau(k)dk.
\end{eqnarray}
Since $\Phi_{\xi,j}\in L^{\infty}_\tau(\mathbb{R}^n)$, $\Phi_{\xi,j}$ is a bounded $\tau$-spherical function. We call it a {\it Bessel function of type $\tau$}.

\begin{theorem}\label{all tau spherical functions}\quad
\begin{itemize}
\item[\bf(a)] $\{\Phi_{\xi,j}:\xi\in\mathbb{R}^n,j=1,2,\cdots l(\xi)\}$ is the set of all bounded $\tau$-spherical functions.
\item[\bf(b)] Two spherical functions $\Phi_{\xi_1,j_1},\Phi_{\xi_2,j_2}$ coincide if and only if the following conditions hold:
\begin{itemize}
\item[(i)] $\xi_1$ and $\xi_2$ lie on the same $K$-orbit;
\item[(ii)] if $\xi_2=k\xi_1$, then $V_{j_2,\xi_2}=\tau(k)\big(V_{j_1,\xi_1}\big)$ (this condition is independent of the choice of $k$).
\end{itemize}
\end{itemize}
\end{theorem}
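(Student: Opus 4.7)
The plan is to identify bounded $\tau$-spherical functions with continuous multiplicative functionals on the commutative Banach algebra $L^1_\tau(\bR^n)$ and compute the latter via the Fourier transform. The vector-valued transform $\cF\colon L^1_\tau(\bR^n)\to (C_0)_\tau(\bR^n)$ is an injective algebra homomorphism (convolution to pointwise product) with dense image, so characters of $L^1_\tau(\bR^n)$ extend uniquely by continuity to characters of the C*-algebra $(C_0)_\tau(\bR^n)$. For each $\xi$, evaluation maps $(C_0)_\tau(\bR^n)$ into $\textup{End}_{K_\xi}(V_\tau)$; by the commutativity hypothesis recorded in Theorem~\ref{eqn R^n gelfand pair}, this is a commutative finite-dimensional C*-algebra with minimal central idempotents $P_{j,\xi}$, so composing evaluation at $\xi$ with $A\mapsto A_j$ (where $A=\sum_j A_j P_{j,\xi}$) yields the character $G\mapsto G_j(\xi)$.

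For (a), the derivation preceding the theorem already shows that $\Phi_{\xi,j}\in L^\infty_\tau(\bR^n)$ realizes the character $F\mapsto \widehat{F}_j(\xi)$ via \eqref{F_j hat}, so each $\Phi_{\xi,j}$ is a bounded $\tau$-spherical function. For the converse I would show that every character of $(C_0)_\tau(\bR^n)$ is of this form: a standard localization argument (using $C_0$-decay and a $K$-equivariant partition of unity built via Lemma~\ref{existance of tau type function}) shows that the common zero locus of the kernel of any character is non-empty in $\bR^n$, so the character factors through some evaluation $(C_0)_\tau(\bR^n)\to\textup{End}_{K_\xi}(V_\tau)$, and the only characters of the latter are the coordinate projections onto the $P_{j,\xi}$-components. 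Alternatively, because $\bR^n$ is nilpotent, one may invoke Theorems~\ref{positive on nilpotent} and~\ref{positive-representation} (every bounded $\tau$-spherical function is of positive type and thus arises from an irreducible unitary representation of $H$) combined with formula~\eqref{formula of tau spherical function general case} specialized to the one-dimensional characters $\pi_\xi$ of $\bR^n$, for which $K_{\pi_\xi}=K_\xi$, $\cH_{\pi_\xi}=\bC$, and the projective cocycle $\delta$ is trivial; the formula then reduces, after a change of variable $k\mapsto k^{-1}$, to~\eqref{eqn tau spherical function} with the decomposition of $\tau|_{K_\xi}$ into the $V_{j,\xi}$ providing the index.

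For (b), the ``if'' direction is a direct calculation: if $\xi_2=k\xi_1$ and $P_{j_2,\xi_2}=\tau(k)P_{j_1,\xi_1}\tau(k^{-1})$, the $\tau$-equivariance $\widehat{F}(k\xi)=\tau(k)\widehat{F}(\xi)\tau(k^{-1})$ together with trace cyclicity gives $\widehat{F}_{j_2}(\xi_2)=\widehat{F}_{j_1}(\xi_1)$ for every $F\in L^1_\tau(\bR^n)$; independence from the choice of $k$ is a consequence of the fact that $P_{j_1,\xi_1}$ commutes with $\tau|_{K_{\xi_1}}$. For the ``only if'' direction, suppose $\Phi_{\xi_1,j_1}=\Phi_{\xi_2,j_2}$. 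If $K\xi_1\cap K\xi_2=\emptyset$, I would use Lemma~\ref{existance of tau type function} combined with density of $\cF(L^1_\tau)$ in $(C_0)_\tau(\bR^n)$ to construct $F\in L^1_\tau(\bR^n)$ with $\widehat{F}_{j_1}(\xi_1)$ close to $1$ and $\widehat{F}_{j_2}(\xi_2)$ close to $0$, contradicting the hypothesis. Once $\xi_2=k\xi_1$, varying $F$ so that $\widehat{F}(\xi_1)$ ranges over all of $\textup{End}_{K_{\xi_1}}(V_\tau)$ reduces the equality of characters to the trace identity $\textup{Tr}[P_{j_1,\xi_1}A]/d_{j_1}=\textup{Tr}[\tau(k^{-1})P_{j_2,\xi_2}\tau(k)\,A]/d_{j_2}$ for every such $A$, which forces $\tau(k^{-1})P_{j_2,\xi_2}\tau(k)=P_{j_1,\xi_1}$, equivalently $V_{j_2,\xi_2}=\tau(k)(V_{j_1,\xi_1})$. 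The main technical obstacle is the construction of $L^1_\tau(\bR^n)$-functions with prescribed Fourier data on orbits, which combines Lemma~\ref{existance of tau type function} (producing continuous $\tau$-equivariant functions with prescribed values on slices) with the density of the $\tau$-equivariant Fourier image in $(C_0)_\tau(\bR^n)$.
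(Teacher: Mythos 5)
Your proposal is correct in substance, and it is worth comparing the two routes. For part (a), your ``alternative'' argument is essentially the paper's proof: the paper invokes Theorem \ref{positive on nilpotent} to conclude that every bounded $\tau$-spherical function is of positive type, then Theorem \ref{positive-representation} together with the Mackey description of $\widehat{K\ltimes\bR^n}$ (induction from $G_\xi=K_\xi\times\bR\xi$) and Frobenius reciprocity to pin down the datum $(\xi,V_{j,\xi})$; one small correction is that Theorem \ref{positive-representation} produces an irreducible representation of $K\ltimes\bR^n$, not of $\bR^n$, and passing to the parametrization by $\pi\in\widehat{\bR^n}$ as in \eqref{formula of tau spherical function general case} is exactly that Mackey step. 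Your first route for (a), via the Gelfand theory of the C*-algebra $(C_0)_\tau(\bR^n)$, is genuinely different but hides the hardest point in the unproved assertion that every character of $L^1_\tau(\bR^n)$ is continuous for the sup norm of the Fourier image and hence extends to the C*-completion: for a Banach *-algebra this is not automatic (the naive estimate only gives $L^1$-continuity), and it is essentially equivalent to the symmetry of $L^1(\bR^n,\textup{End}(V_\tau))$, i.e.\ Poguntke's theorem, which is precisely the ingredient the paper channels through the positive-type argument; so if you keep that route you must either prove that characters are hermitian and identify the enveloping C*-algebra with $(C_0)_\tau(\bR^n)$, or fall back on your alternative. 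For part (b) your treatment is actually more explicit than the paper's, which disposes of uniqueness with the single sentence that it ``follows from the properties of induced representations''; your direct computation --- equivariance plus trace cyclicity for the ``if'' direction, separation of disjoint compact orbits by Lemma \ref{existance of tau type function} and density of $\cF(L^1_\tau)$, and the trace identity forcing $\tau(k^{-1})P_{j_2,\xi_2}\tau(k)=P_{j_1,\xi_1}$ for the ``only if'' direction --- is a complete and legitimate substitute that fills in what the paper leaves implicit.
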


\begin{proof}
Let $\Psi $ be a bounded spherical function. By Theorem \ref{positive on nilpotent}, $\Psi $ is of positive type. Applying Theorem \ref{positive-representation} and Proposition \ref{equivalent condition for positive definite proposition}, there is an irreducible unitary representation $\pi$ of $G=K\ltimes\bR^n$ such that $\tau\subset \pi_{|_K}$ (say, with $V_\tau\subset \cH_\pi$, uniquely determined by the multiplicity-free condition) and 
$$
\big\lan\Psi (x)u,v\big\ran =\big\lan\pi(e,x)u,v\big\ran,
$$
for all $u,v\in V_\tau$.

The irreducible unitary representations of $G$ are those induced from irreducible representations of $G_\xi=K_\xi\times\bR\xi$ for some $\xi\in\bR^n$. By Frobenius reciprocity, for a given $\sigma\in\widehat{K_\xi}$ and $\la\in\bR$, $\tau\subset \textup{Ind}_{G_\xi}^G(\sigma\otimes e^{i\la\cdot})$ if and only if $\sigma\subset \tau_{|_{K_\xi}}$. Hence $\Psi $ determines an element $\la\xi\in \bR^n$, unique up to the action of $K$, and a unique invariant subspace $V_{j,\xi}$ of $V_\tau$.

The rest of the proof follows from the properties of  induced representations.
\end{proof}

\begin{corollary}\label{eigenvalues}
Let $\Phi_{\xi,j}$ be one of the spherical functions in \eqref{eqn tau spherical function} and  $D\in\big(\bD(\bR^n)\otimes \textup{\rm End}(V_\tau)\big)^K$,
$$
D=Q(\de_x)\ ,\qquad Q\in \big(\fS(\bR^n)\otimes \textup{\rm End}(V_\tau)\big)^K\ .
$$

Then the eigenvalue $\lambda_D(\Phi_{\xi,j})$ of $\Phi_{\xi,j}$ under the action of $D$ is the eigenvalue of $Q(-i\xi)$ on $V_{j,\xi}$.
\end{corollary}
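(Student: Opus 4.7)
The plan is a direct computation: I would differentiate the integral formula \eqref{eqn tau spherical function} for $\Phi_{\xi,j}$ under the integral sign, then use the $K$-equivariance of the symbol $Q$ together with Schur's lemma to extract a scalar. Expanding $Q(y)=\sum_\alpha T_\alpha y^\alpha$ with $T_\alpha\in\textup{End}(V_\tau)$, so that $D=\sum_\alpha T_\alpha\partial^\alpha$, and noting that only the factor $e^{-i\xi\cdot (k\cdot x)}$ depends on $x$ while its $\alpha$-th derivative brings down $(-ik^{-1}\xi)^\alpha$ (using $k\in O(n)$, so $k^T=k^{-1}$), differentiation inserts the matrix factor $Q(-ik^{-1}\xi)$ to the left of $\tau(k^{-1})P_{j,\xi}\tau(k)$ inside the integral.

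Next I would invoke the $K$-equivariance of $Q$ in the form compatible with the action \eqref{KonHsections} of $K$ on sections, namely $Q(k^{-1}\eta)=\tau(k^{-1})Q(\eta)\tau(k)$. Applying this with $\eta=-i\xi$ collapses the adjacent pair $\tau(k)\tau(k^{-1})$ to the identity, so the integrand becomes $\tau(k^{-1})\,Q(-i\xi)\,P_{j,\xi}\,\tau(k)\,e^{-i\xi\cdot(kx)}$. The same equivariance restricted to $k\in K_\xi$ (where $k^{-1}\xi=\xi$) forces $Q(-i\xi)$ to commute with $\tau|_{K_\xi}$; hence $Q(-i\xi)$ preserves the multiplicity-free decomposition $V_\tau=\bigoplus_j V_{j,\xi}$, and since each $V_{j,\xi}$ is $K_\xi$-irreducible, Schur's lemma yields $Q(-i\xi)|_{V_{j,\xi}}=\mu\cdot I_{V_{j,\xi}}$ for some scalar $\mu$. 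Then $Q(-i\xi)P_{j,\xi}=\mu P_{j,\xi}$, the scalar $\mu$ pulls outside the integral, and the leftover integrand recovers $\Phi_{\xi,j}(x)$, giving $D\Phi_{\xi,j}=\mu\,\Phi_{\xi,j}$ with $\mu$ the eigenvalue of $Q(-i\xi)$ on $V_{j,\xi}$.

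The only subtle point is to state the $K$-equivariance of $Q$ in the precise form (with the $\tau$ factors on the correct sides) that makes the middle $\tau$'s cancel as above; once that convention is pinned down by checking which version is consistent with $D$ commuting with the action \eqref{KonHsections} of $K$ on $V_\tau$-valued functions, the rest is a short direct calculation combined with a single application of Schur's lemma.
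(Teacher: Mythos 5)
Your argument is correct and takes essentially the same route as the paper: both proofs reduce to the observation that $Q(-i\xi)$ commutes with $\tau|_{K_\xi}$, hence by Schur's lemma and multiplicity-freeness acts as a scalar $\lambda$ on $V_{j,\xi}$, so that $Q(-i\xi)P_{j,\xi}=\lambda P_{j,\xi}$, after which the scalar pulls out of the integral defining $\Phi_{\xi,j}$. The only cosmetic difference is that the paper applies $D$ to the single term $e^{-i\xi\cdot x}P_{j,\xi}$ and invokes the $K$-invariance of $D$ to transfer the eigenvalue equation to each $k$-conjugated term, whereas you differentiate under the integral sign and cancel the $\tau$ factors via the symbol equivariance $Q(k^{-1}\eta)=\tau(k^{-1})Q(\eta)\tau(k)$ --- which is indeed the convention consistent with the action \eqref{KonHsections} --- but this is the same computation written out explicitly.
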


\begin{proof}
Since $Q(-i\xi)$ commutes with $\tau_{|_{K_\xi}}$,  it is a scalar multiple of the identity, say $\lambda I$, on $V_{j,\xi}$. Hence $Q(-i\xi)P_{j,\xi}=\la P_{j,\xi}$.
Therefore, 
$$
D\big(e^{-i\xi\cdot x}P_{j,\xi}\big)=Q(-i\xi)e^{-i\xi\cdot x}P_{j,\xi}=\lambda e^{-i\xi\cdot x}P_{j,\xi}.
$$

By the invariance of $D$, the same holds for $e^{-i\xi\cdot (k\cdot x)}\tau(k^{-1})P_{j,\xi}\tau(k)$ and all $k\in K$. Hence, by~\eqref{eqn tau spherical function}, 
$$
D\Phi_{\xi,j}=\lambda\Phi_{\xi,j}. \qquad\qedhere
$$
\end{proof}

\medskip

Consider now a commutative triple $(K\ltimes H_n, K,\tau)$ with $K\subseteq U(n)$ and $\tau\in \widehat K$. Let 
$$
\cP_{\textup{hol}}(\bC^n)\otimes V_\tau=\sum_\alpha W^+_\alpha\ ,\qquad \cP_{\textup{hol}}(\bC^n)\otimes V_\tau=\sum_\beta W^-_\beta
$$
be the decompositions into irreducible $K$-invariant subspaces under $\sigma_{|_K}\otimes \tau$ and $\sigma'_{|_K}\otimes \tau$ respectively.

Since, for $\la\ne0$, $K_{\pi_\lambda}=K$, the general formula \eqref{formula of tau spherical function general case} for spherical functions gives 
\begin{equation}\label{Phi+,Phi-}
\begin{aligned}
\Phi^+_{\lambda,\alpha}(z,t)&=\frac{d_\tau}{d_\alpha}e^{i\lambda t}\textup{Tr}_{\cP_{\textup{hol}}}\big((\pi_\lambda(z,0)\otimes I)P_\alpha\big)\\ 
\Phi^-_{\lambda,\beta}(z,t)&=\frac{d_\tau}{d_\beta}e^{i\lambda t}\textup{Tr}_{\cP_{\textup{hol}}}\big((\pi_\lambda(z,0)\otimes I)P_\beta\big)\ ,
\end{aligned}
\end{equation}
for $\lambda>0$ and $\lambda<0$ respectively.  We call these the {\it Laguerre-type $\tau$-spherical functions}, because the matrix entries of $\pi_\lambda(z,0)$ contain Laguerre functions in $|z|^2$, cf. \cite{Fo}.

 To the Laguerre $\tau$-spherical functions one should add those corresponding to the 1-dimensional representations of $H_n$ (i.e., to $\lambda=0$), which are Bessel functions of type $\tau$.

\begin{theorem}
The bounded spherical functions for $(K\ltimes H_n, K,\tau)$ are the functions $\Phi^+_{\lambda,\alpha}$ and $\Phi^-_{\lambda,\beta}$ in \eqref{Phi+,Phi-} and the  functions
$\Phi^0_{\xi,j}(z,t)=\Phi_{\xi,j}(z)$, where the $\Phi_{\xi,j}$ are the Bessel functions of type $\tau$ \eqref{eqn tau spherical function} for the commutative triple $(K\ltimes\bC^n,K,\tau)$.
\end{theorem}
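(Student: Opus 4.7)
The strategy is to combine the positive-type property for bounded spherical functions in the nilpotent setting (Theorem~\ref{positive on nilpotent}) with the representation-theoretic description of spherical functions of positive type (Theorem~\ref{positive-representation}), and then enumerate the relevant irreducible representations of $G = K \ltimes H_n$ using Mackey theory and the Stone--von Neumann classification of $\widehat{H_n}$.

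First I would let $\Psi$ be an arbitrary bounded $\tau$-spherical function on $H_n$. Since $H_n$ is nilpotent, Theorem~\ref{positive on nilpotent} gives that $\Psi$ is of positive type; hence so is $T(\Psi)$ on $G$ by Proposition~\ref{equivalent condition for positive definite proposition}, and Theorem~\ref{positive-representation} produces an irreducible unitary representation $\pi$ of $G$ with $V_\tau \subseteq \cH_\pi$ realizing $T(\Psi)$ as a matrix coefficient. By Mackey theory, such $\pi$ is parameterized by a $K$-orbit in $\widehat{H_n}$ together with an irreducible representation of the associated extended stabilizer. Stone--von Neumann splits $\widehat{H_n}$ into two families: the infinite-dimensional Schr\"odinger representations $\pi_\lambda$ ($\lambda \in \bR^*$), each of which is $K$-fixed so that $K_{\pi_\lambda} = K$; and the 1-dimensional characters, which factor through the quotient $H_n \to H_n / Z(H_n) \cong \bC^n$.

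In the case $\lambda \neq 0$, the analysis in the proof of Proposition~\ref{multfreeH_n} shows that the extension of $\pi_\lambda$ to $K \ltimes H_n$ uses the projective representation $\delta$ that coincides with $\sigma_{|_K}$ when $\lambda > 0$ and with $\sigma'_{|_K}$ when $\lambda < 0$. The irreducible constituents of $\delta \otimes \tau$ are then precisely the $W^+_\alpha$ (resp.\ $W^-_\beta$), and plugging this data into the general formula~\eqref{formula of tau spherical function general case} yields exactly the functions $\Phi^+_{\lambda,\alpha}$ and $\Phi^-_{\lambda,\beta}$ of~\eqref{Phi+,Phi-}. In the remaining case, $\pi$ is built from a 1-dimensional character of $H_n$ which is trivial on the center, so the resulting $\Psi$ is independent of the $t$-variable and descends to a bounded $\tau$-spherical function on $\bC^n$ for the subtriple $(K \ltimes \bC^n, K, \tau)$. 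That this subtriple is itself commutative follows by the ``tensor with an integrable function of $t$'' trick used in the proof of the $SU(n) \ltimes H_n$ corollary. Theorem~\ref{all tau spherical functions} then identifies the descent with one of the Bessel functions $\Phi_{\xi,j}$ of~\eqref{eqn tau spherical function}, and lifting trivially in $t$ gives $\Phi^0_{\xi,j}$. The converse direction---that each function of the three listed types is a bounded $\tau$-spherical function---is immediate, as each is constructed explicitly as a matrix coefficient of an irreducible unitary representation of $G$ containing $\tau$ in its restriction to $K$. The delicate point will be the bookkeeping for $\lambda \neq 0$: verifying that the projective data on $K$ from Mackey theory genuinely matches the linear $K$-action entering~\eqref{Phi+,Phi-}, which in turn relies on the identification of $\delta$ with $\sigma_{|_K}$ (resp.\ $\sigma'_{|_K}$) made in Proposition~\ref{multfreeH_n}.
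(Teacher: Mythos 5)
Your proposal is correct and follows exactly the route the paper intends: the paper in fact states this theorem without a written proof, but the argument is the obvious adaptation of its proof of Theorem~\ref{all tau spherical functions} for $\bR^n$ (positive type via Theorem~\ref{positive on nilpotent}, then Theorem~\ref{positive-representation} and Proposition~\ref{equivalent condition for positive definite proposition}, then Mackey theory over $\widehat{H_n}$), combined with the construction of the $\Phi_{\pi,\alpha}$ from formula~\eqref{formula of tau spherical function general case} carried out just before the statement. Your handling of the two Stone--von Neumann cases, including the identification of $\delta$ with $\sigma_{|_K}$ or $\sigma'_{|_K}$ from Proposition~\ref{multfreeH_n} and the descent of the $\lambda=0$ case to the Bessel functions of $(K\ltimes\bC^n,K,\tau)$, matches the paper's framework precisely.
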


\section{An example}\label{sect:example}
\vskip.5cm

We give the explicit form of generating differential operators and of the spherical functions in the case

$$
H=\bR^n\ ,\qquad K=SO(n)\ ,\qquad V_\tau=\bC^n,
$$
where  $\tau$ is the defining $n$-dimensional representation of $SO(n)$.

Let $\xi\in\bR^n\setminus\{0\}$. It $n=2$, $K_\xi=\{I\}$. Therefore $V_\tau$ decomposes with multiplicity under the action of $K_\xi$. Hence, if $n=2$, $(\bR^n,K,\tau)$ is not a commutative triple. 

On the other hand, for $n\geq 3$ and $\xi=\rho e_1\ne0$, $K_\xi\cong SO(n-1)$ and $V_\tau$ decomposes under the action of $K_\xi$ as
\begin{equation}\label{V-splitting} 
V_\tau=\begin{cases}\bC e_1\oplus\bC(e_2+ie_3)\oplus\bC(e_2-ie_3)=V_{e_1,1}\oplus V_{e_1,2}\oplus V_{e_1,3}&\textup{ if }n=3,\\
\bC e_1\oplus e_1^\perp=V_{e_1,1}\oplus V_{e_1,2}&\textup{ if }n>3,
\end{cases}
\end{equation} 
in both cases without multiplicities. 
Hence for any $n\geq 3$, $(\bR^n,K,\tau)$ is a commutative triple. 

We describe a system $\cD$ of generators of $\big(\mathbb{D}(\bR^n)\otimes \textup{End}(V_\tau)\big)^K$. To this purpose, we investigate the structure of the algebra of $K$-invariant elements in  $\fS(\bR^n)\otimes \textup{End}(V_\tau)$.

We denote by $\sigma$ the representation of $K$ on the space $\cP(\bR^n,M_{n,n})\cong\fS(\bR^n)\otimes \textup{End}(V_\tau)$ of matrix-valued polynomials, given by 
$$
\big(\sigma(k)P\big)(x)=kP(k\inv x)k^{-1}.
$$
For a $\sigma$-invariant space $W\subset \cP(\bR^n, M_{n,n})$, let $W^\tau$ denote the space of all $\sigma(k)$-fixed elements in $W$. Note that a polynomial $P:\bR^n\rightarrow M_{n,n}$ is $\sigma$-invariant iff  
$$
P(k\cdot x)=kP(x)k^{-1}\qquad\forall\,k\in K.
$$
Let $\cI=\bC\big[|x|^2\big]$ be the space of all $K$-invariant scalar-valued polynomials on $\bR^n$, $\cH(\bR^n)$  the space of all harmonic polynomials, $\cH_m(\bR^n)$ be the space of all homogeneous harmonic polynomials of degree $m$. Then
\begin{equation}\label{a.1}
\begin{aligned}
\big(\cP(\bR^n,M_{n,n})\big)^\tau 
&=\cI\,\big(\cH(\bR^n,M_{n,n})\big)^\tau
\\ &= \cI\,\bigoplus_{m=0}^\infty\big(\cH_m(\bR^n,M_{n,n})\big)^\tau
\end{aligned}
\end{equation} 

Let $\delta_m$ and $\pi\sim\tau\otimes\tau'$ denote the natural representations of $K$ on $\cH_m(\bR^n)$ and $M_{n,n}$ respectively:
$$
\delta_m(k)h(x)=h(k\inv x)\ ,\qquad \pi(k)A=kAk\inv\ ,
$$
so that the restriction of $\sigma$ to $\cH_m(\bR^n,M_{n,n})$ is equivalent to $\delta_m\otimes\pi$.

Under $\pi$, $M_{n,n}$ decomposes into irreducibles as:
$$
M_{n,n}=\bC\,I\oplus M_{n,n}^{\textup{skew}}\oplus 
M_{n,n}^{\textup{sym},0},
$$
where $M_{n,n}^{\textup{skew}}$ is the space of  $n\times n$ skew-symmetric matrices, and $M_{n,n}^{\textup{sym},0}$ is the space of  $n\times n$ symmetric matrices with zero trace. Then, for every $m$,
\begin{equation}\label{a.2}
\big (\cH_m(\bR^n,M_{n,n})\big)^\tau =\big(\cH_m(\bR^n,\bC\,I)\big)^\tau\bigoplus \big(\cH_m(\bR^n M_{n,n}^{\textup{skew}})\big)^\tau\bigoplus \big(\cH_m(\bR^n M_{n,n}^{\textup{sym},0}) \big)^\tau.  
\end{equation}

Let us introduce the following elements of $\big(\cP(\bR^n)\otimes\textup{End}(V)\big)^\tau$.
$$
I_{3,1}(x)=
\begin{pmatrix}
0 & -x_3 & x_2\\
x_3 & 0 & -x_1\\
-x_2 & x_1 & 0
\end{pmatrix}
\in \big(\cH_1(\bR^3, M_{3,3}^{\textup{skew}})\big)^\tau,
$$
$$
I_{n,2}(x)=
x\trans x-\frac{1}{n}|x|^2I
\in \big(\cH_2(R^n), M_{n,n}^{\textup{sym},0}\big)^\tau)\qquad n\geq 3.
$$

\begin{theorem}\label{Js}
Let $P\in \big(\cP(\bR^n,M_{n,n})\big)^\tau$. Then
$$
P(x)=\begin{cases}p_0\big(|x|^2\big)+p_1\big(|x|^2\big)J_{3,1}(x)+p_2\big(|x|^2\big)J_{3,2}(x)&\text{ if }n=3\\
p_0\big(|x|^2\big)+p_2\big(|x|^2\big)J_{n,2}(x)&\text{ if }n>3
\end{cases}
$$
for some $p_0,p_1,p_2\in\cI$.
\end{theorem}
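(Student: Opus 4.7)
The plan is to analyze $(\cP(\bR^n,M_{n,n}))^\tau$ via the decompositions \eqref{a.1} and \eqref{a.2}, reducing the problem to computing, for each irreducible summand $W$ of $M_{n,n}$ under $\pi$ and each $m\ge 0$, the multiplicity of $W$ in $\cH_m(\bR^n)$ as an $SO(n)$-module. Once this is done, the theorem will drop out by Schur's lemma: indeed the summand of \eqref{a.2} corresponding to $W$ is precisely $\cH_m \otimes \textup{Hom}_K(\cH_m,W)$, which is 1-dimensional if $W\cong \cH_m$ and zero otherwise (using that each $\cH_m$ is orthogonal, hence self-dual).

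First I would record the $SO(n)$-module structure of $M_{n,n}$ under $\pi(k)A=kAk\inv$ for $n\ge 3$: the three summands $\bC I$, $M_{n,n}^{\textup{skew}}$ and $M_{n,n}^{\textup{sym},0}$ are irreducible, with highest weights $(0,\dots,0)$, $(1,1,0,\dots,0)$ (for $n\ge 4$), and $(2,0,\dots,0)$ respectively. In dimension $n=3$ one has instead the accidental isomorphism $M_{3,3}^{\textup{skew}}\cong \Lambda^2\bC^3\cong \bC^3$, so the skew summand has highest weight $(1)$. In parallel, $\cH_m(\bR^n)$ is irreducible with highest weight $(m,0,\dots,0)$. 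Matching weights shows that the only representations of $M_{n,n}$ which can occur in some $\cH_m$ are $\bC I$ (matched by $\cH_0$), $M_{n,n}^{\textup{sym},0}$ (matched by $\cH_2$), and for $n=3$ also $M_{3,3}^{\textup{skew}}$ (matched by $\cH_1$); for $n\ge 4$ the skew part contributes nothing because its highest weight has two nonzero entries.

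Next I would verify that $J_{n,2}$ lies in $(\cH_2(\bR^n,M_{n,n}^{\textup{sym},0}))^\tau$ and that, for $n=3$, the same holds for $J_{3,1}\in (\cH_1(\bR^3,M_{3,3}^{\textup{skew}}))^\tau$ and $J_{3,2}\in (\cH_2(\bR^3,M_{3,3}^{\textup{sym},0}))^\tau$. Invariance under $\sigma$ is immediate from the explicit formulas (they are built out of $x\trans x$ minus its trace, and out of the cross product, both of which are $SO(n)$-equivariant), each component is homogeneous of degree $\le 2$ and harmonic (the Laplacian annihilates $x\trans x - \frac1n|x|^2I$ entrywise, and $J_{3,1}$ is linear), and nonvanishing is clear by evaluating at $x=e_1$. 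By Schur, these elements therefore span the respective 1-dimensional invariant spaces, so $\bigoplus_m (\cH_m(\bR^n,M_{n,n}))^\tau$ reduces to $\bC I$ in degree $0$, $\bC J_{3,1}$ in degree $1$ (only for $n=3$), and $\bC J_{n,2}$ in degree $2$. Combining with \eqref{a.1} gives the stated factorization.

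The main point of care is the representation-theoretic bookkeeping, namely identifying the highest weights of the three summands of $M_{n,n}$; the dichotomy between $n=3$ and $n\ge 4$ in the statement is precisely the shadow of the accidental isomorphism $\Lambda^2\bC^3\cong \bC^3$ in dimension three, which is why an additional $J_{3,1}$-term appears only for $SO(3)$.
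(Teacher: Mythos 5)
Your proposal is correct and follows essentially the same route as the paper: both reduce via the decompositions \eqref{a.1}--\eqref{a.2} to deciding, by Schur's lemma, for which $m$ and which summand $W$ of $M_{n,n}$ one has $\cH_m(\bR^n)\cong W$, and then exhibit $I_{3,1}$, $I_{n,2}$ as the nonzero invariants. The only difference is that you settle the equivalence question by matching highest weights where the paper uses a dimension count; both work.

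One small correction: your assertion that $M_{n,n}^{\textup{skew}}$ is irreducible with highest weight $(1,1,0,\dots,0)$ for all $n\ge4$ fails at $n=4$, where $\Lambda^2\bC^4$ splits into the self-dual and anti-self-dual parts with highest weights $(1,1)$ and $(1,-1)$. (The paper's claim that $M_{n,n}=\bC I\oplus M_{n,n}^{\textup{skew}}\oplus M_{n,n}^{\textup{sym},0}$ is a decomposition into irreducibles has the same defect.) Your conclusion is unaffected, since each of these two weights still has two nonzero entries and so neither component can be equivalent to any $\cH_m(\bR^4)$, but the case $n=4$ should be stated separately rather than folded into the generic $n\ge4$ claim.
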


\begin{proof}
Clearly,
$$
\big(\cH(\bR^n,\bC\,I)\big)^\tau=\big(\cH(\bR^n)\cap\cI)\otimes I=\bC\, I.
$$

Let $U_1=M_{n,n}^{\textup{skew}}$, $U_2=M_{n,n}^{\textup{sym},0}$. By Schur's lemma it follows that $\big(H_m(\bR^n)\otimes U_j\big)^\tau$ contains a non-zero element iff $\delta_m\sim\pi^\prime_{|_{U_j}}\sim\pi_{|_{U_j}}$; and in that case it is unique up to a scalar multiple.

A simple check of dimensions shows that this equivalence can only hold for $j=m=2$ if $n\ge3$, and for $j=m=1$ if $n=3$.
\end{proof}

Theorem \ref{Js} implies that $\big(\cP_K(\bR^n)\otimes\textup{End}(V)\big)^\tau$ is generated  as an $\cI$-module by $I,I_{3,1}, I_{3,2}$ if $n=3$, and by $I, I_{n,2}$ if $n>3$. 

\begin{corollary}
$\{I,I_{3,1}\}$
(resp. $\{I, I_{n,2}\}$) is a set of independent generators for the algebra $\big(\cP(\bR^n)\otimes \textup{End}(V)\big)^\tau$ when $n=3$ (resp. $n>3$).

For $F:\bR^n\longrightarrow \bC^n$, let 
$$
\textbf{D}F=\begin{cases}I_{3,1}(\de_x)F=\textup{\rm curl} F& \text{ if }n=3\\
(I_{n,2}(\de_x)+\frac{1}{n}\Delta I)F=\textup{\rm grad}\,\textup{\rm div}F&\text{ if }n>3.
\end{cases}
$$ 

Then $\cD=\{\Delta I,\textbf{D}\}$ generates the algebra $\big(\mathbb{D}(\bR^n)\otimes \textup{End}(V_\tau)\big)^K$.
\end{corollary}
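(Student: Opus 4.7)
The strategy is to prove the polynomial assertion first, then to transfer it to differential operators through the symmetrization map. Because $\bR^n$ is abelian, $\fU(\bR^n)=\fS(\bR^n)=\bC[\de_1,\dots,\de_n]$ and the map $\lambda:x_j\mapsto\de_j$ is an \emph{algebra} (not merely linear) isomorphism $\fS(\bR^n)\to\bD(\bR^n)$. Tensoring with $\textup{End}(V_\tau)$ and restricting to $K$-invariants, the map $\Lambda$ of Theorem~\ref{Lambda} becomes an algebra isomorphism from $\big(\cP(\bR^n)\otimes\textup{End}(V)\big)^\tau$ onto $\big(\bD(\bR^n)\otimes\textup{End}(V_\tau)\big)^K$. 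Elementary symbol computations identify its values on the relevant matrix-valued polynomials: $|x|^2 I\mapsto\Delta I$; for $n=3$, $I_{3,1}(x)\mapsto\textup{curl}=\mathbf{D}$; and for $n>3$, $I_{n,2}(x)\mapsto\textup{grad\,div}-\tfrac{1}{n}\Delta I=\mathbf{D}-\tfrac{1}{n}\Delta I$. Thus it suffices to prove the polynomial assertion, read over the commutative subalgebra $\cI=\bC[|x|^2]$.

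For the polynomial assertion I invoke Theorem~\ref{Js}. When $n>3$ it gives the free $\cI$-module decomposition $\big(\cP\otimes\textup{End}(V)\big)^\tau=\cI\oplus\cI\cdot I_{n,2}$, so $\{I,I_{n,2}\}$ is already an $\cI$-basis; the identity $(x\trans x)^2=|x|^2 x\trans x$ furnishes the product rule
$$
I_{n,2}^2=\tfrac{n-2}{n}|x|^2 I_{n,2}+\tfrac{n-1}{n^2}|x|^4 I,
$$
governing how higher powers reduce. When $n=3$, Theorem~\ref{Js} initially lists three $\cI$-module generators $\{I,I_{3,1},I_{3,2}\}$, and I must eliminate $I_{3,2}$. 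Using $I_{3,1}(x)v=x\times v$ together with the vector identity $x\times(x\times v)=\lan x,v\ran x-|x|^2 v$, I obtain $I_{3,1}^2=x\trans x -|x|^2 I=I_{3,2}-\tfrac{2}{3}|x|^2 I$, hence $I_{3,2}=I_{3,1}^2+\tfrac{2}{3}|x|^2 I$ lies in the $\cI$-algebra generated by $I_{3,1}$. A further computation $I_{3,1}(x)\cdot x\trans x=0$ yields the Cayley--Hamilton-type relation $I_{3,1}^3=-|x|^2 I_{3,1}$, which confirms minimality of the generating set.

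The main point requiring care is the algebra-isomorphism property of $\Lambda$: without the abelianness of $\bR^n$ it would be only a linear bijection, and the polynomial and operator generation statements would not be formally equivalent. The concrete matrix identities above, though elementary, are essential because they exhibit the precise coefficients needed to reduce $I_{n,2}^2$ and $I_{3,1}^2$ back into the spans produced by Theorem~\ref{Js}; once these are in hand, the conclusion on $\cD=\{\Delta I,\mathbf{D}\}$ follows by transferring through $\Lambda$.
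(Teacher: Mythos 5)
Your proof is correct and follows essentially the same route as the paper: use Theorem \ref{Js} to reduce to the polynomial level, express $I_{3,2}$ (i.e.\ $x\trans x$ up to a multiple of $|x|^2I$) through $I_{3,1}^2$ and $|x|^2I$, and transfer to differential operators via the symmetrization, which is an algebra isomorphism here because $\bR^n$ is abelian. The paper's own proof is just a one-line remark recording the identity $\textup{grad}\,\textup{div}=\Delta-\textup{curl}\,\textup{curl}$; your version supplies the explicit reduction formulas (e.g.\ for $I_{n,2}^2$) that the paper leaves implicit.
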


\begin{proof}
We just need to mention that, for $n=3$,  $I_{3,2}=|x|^2I+I_{3,1}^2$ (equivalently, $\textup{\rm grad}\,\textup{\rm div}F= \Delta F-\textup{\rm curl}\,\textup{\rm curl}F$). Moreover,
$I,I_{3,1}$ are algebraically independent when $n=3$ (resp. $I,I_{n,2}$ when $n>3$).
\end{proof}
\medskip

According to Theorem \ref{all tau spherical functions}, the bounded spherical function are given by formula \eqref{eqn tau spherical function}. 

For $\xi=0$, we have a unique spherical function $\Phi_{0,1}(x)=I$. We can then take $\xi=s e_1$ with $s>0$. By \eqref {eqn tau type function}, it suffices to compute $\Phi_{s e_1,j}=\Phi_{s,j}$ for $x=re_1$ with $r>0$, as we already know that $\Phi_{s,j}(0)=I$.

Factoring the integral in \eqref{eqn tau spherical function} modulo $K_\xi=\Big\{\begin{pmatrix} 1&0\\0&h\end{pmatrix}:h\in SO(n-1)\Big\}$, we reduce ourselves to an integral over the sphere $S^{n-1}$,
$$
\Phi_{s ,j}(re_1)=\frac n{d_j}\int_{S^{n-1}}e^{is r y\cdot e_1}P_{j,y}\,d\sigma_{n-1}(y)\ ,
$$
where $\sigma_{n-1}$ is the normalized surface measure on $S^{n-1}$, $y=k\inv e_1$ and 
$P_{j,y}=k\inv P_{j,e_1}k$.
\medskip

\subsubsection{The case $n>3$}\quad
\smallskip

Referring to the decomposition \eqref{V-splitting}, we have
$$
P_{1,y}=k\inv e_1\trans e_1 k=y\trans y\ ,\qquad P_{2,y}=I_n-P_{1,y}=I_n-y\trans y.
$$

Hence, for each $s>0$ we have two spherical functions,
$$
\begin{aligned}
\Phi_{s,1}(re_1)&=n\int_{S^{n-1}}e^{is r y\cdot e_1}y\trans y\,d\sigma(y)\\
 \Phi_{s,2}(re_1)&=\frac n{n-1}\int_{S^{n-1}}e^{is r y\cdot e_1}(I_n-y\trans y)\,d\sigma(y)=\frac n{n-1}\Big(\int_{S^{n-1}}e^{is r y\cdot e_1}\,d\sigma(y)\Big)I_n-\frac1{n-1}\Phi_{s,1}(re_1)\ .
 \end{aligned}
$$

Setting $y=(\cos t, (\sin t) z)$, with $z\in S^{n-2}$,
$$
\Phi_{s,1}(re_1)=nc_n\inv\int_0^\pi\int_{S^{n-2}} e^{is r \cos t}\begin{pmatrix}\cos^2t&(\cos t\sin t) \trans z\\ (\cos t\sin t )z&(\sin^2 t)z\trans z\end{pmatrix}\,d\sigma_{n-2}(z)\,\sin^{n-2}t\,dt,
$$
with
$$
c_n =\int_0^\pi\sin^{n-2}t\,dt=  \pi^{\frac12}\frac{\Gamma(\frac {n-1}2)}{\Gamma(\frac n2)}.
$$

Integration in $z$ annihilates the off-diagonal terms $z_j$ and $z_jz_k$ with $j\ne k$, while $\int_{S^{n-2}}z_j^2 d\sigma_{n-2}(z)=\frac1{n-1}$ for symmetry reasons. Hence
$$
\begin{aligned}
\Phi_{s,1}(re_1)&=  nc_n\inv\int_0^\pi e^{is r \cos t}\begin{pmatrix}\cos^2t&0\\ 0&\frac{\sin^2 t}{n-1}I_{n-1}\end{pmatrix}\,\sin^{n-2}t\,dt\\
&= nc_n\inv\int_0^\pi e^{is r \cos t}\sin^{n-2}t\,dt\begin{pmatrix}1&0\\ 0&0\end{pmatrix}\\
&\qquad + nc_n\inv\int_0^\pi e^{is r \cos t}\sin^nt\,dt\begin{pmatrix}-1&0\\ 0&\frac1{n-1}I_{n-1}\end{pmatrix}.
\end{aligned}
$$

From the identity
$$
\int_0^\pi e^{iu \cos t}\sin^kt\,dt=2^{\frac k2}\pi^\half \Gamma\Big(\frac {k+1}2\Big)\frac{J_{\frac k2}(u)}{u^{\frac k2}}
$$
we obtain that
$$
\Phi_{s,1}(re_1)=n2^{\frac n2-1}\Gamma\Big(\frac n2\Big)\bigg(\frac{J_{\frac n2-1}(s r)}{(s r)^{\frac n2-1}}\begin{pmatrix}1&0\\ 0&0\end{pmatrix}+\frac{J_{\frac n2}(s r)}{(s r)^{\frac n2}}\begin{pmatrix}-(n-1)&0\\ 0&I_{n-1}\end{pmatrix}\bigg).
$$

For general $x=rke_1$, we  have
$$
\begin{aligned}
k\begin{pmatrix}1&0\\ 0&0\end{pmatrix}k\inv&=(ke_1)\trans (ke_1)=|x|^{-2}x\trans x,\\
k\begin{pmatrix}0&0\\ 0&I_{n-1}\end{pmatrix}k\inv&=I_n-|x|^{-2}x\trans x,
\end{aligned}
$$
so that
$$
\Phi_{s,1}(x)=n2^{\frac n2-1}\Gamma\Big(\frac n2\Big)\bigg(\frac{J_{\frac n2-1}(s |x|)}{s^{\frac n2-1}|x|^{\frac n2+1}}x\trans x+\frac{J_{\frac n2}(s |x|)}{s^{\frac n2}|x|^{\frac n2+2}}\big(|x|^2I_n-nx\trans x\big)  \bigg)
$$

Similarly,
$$
\Phi_{s,2}(re_1)=\frac{n2^{\frac n2-1}}{n-1}\Gamma\Big(\frac n2\Big)\bigg(\frac{J_{\frac n2-1}(s |x|)}{s^{\frac n2-1}|x|^{\frac n2+1}}(n|x|^2I_n-x\trans x)-\frac{J_{\frac n2}(s |x|)}{s^{\frac n2}|x|^{\frac n2+2}}\big(|x|^2I_n-nx\trans x\big)  \bigg).
$$

\smallskip
By Corollary \ref{eigenvalues}, the map $\rho_\cD$ of Section \ref{sect:embeddings} relative to the system $\cD=(\Delta I,\textup{\rm grad}\,\textup{\rm div})$
is given by
$$
\rho_\cD(\Phi_{s,1})=(-s^2,-s^2)\ ,\qquad \rho_\cD(\Phi_{s,2})=\big(-s^2,0\big).
$$

\medskip

\subsubsection{The case $n=3$}\quad
\smallskip

For $y\in S^2$, we have
$$
\begin{aligned}
P_{1,y}&=y\trans y\\
 P_{2,y}&=\frac{1}{2}k\inv(e_2+ie_3)\trans(e_2-ie_3)k=\frac{1}{2}\big(I_3-y\trans y+iI_{3,1}(y)\big)\\
  P_{3,y}&=\frac{1}{2}k\inv(e_2-ie_3)\trans(e_2+ie_3)k=\frac{1}{2}\big(I_3-y\trans y-iI_{3,1}(y)\big).
  \end{aligned}
$$

For each $s>0$ we now have three spherical functions,
$$
\begin{aligned}
\Phi_{s,1}(re_1)&=3\int_{S^2}e^{is r y\cdot e_1}y\trans y\,d\sigma(y)\\
 \Phi_{s,2}(re_1)&=\frac{3}{2}\int_{S^2}e^{is r y\cdot e_1}\big(I_n-y\trans y+iI_{3,1}(y)\big)\,d\sigma(y)\\
  \Phi_{s,3}(re_1)&= \frac{3}{2}\int_{S^2}e^{is r y\cdot e_1}\big(I_n-y\trans y-iI_{3,1}(y)\big)\,d\sigma(y).
 \end{aligned}
$$

The only new computation that is needed concerns the integral
$$
\cI(u)=\int_{S^2}e^{iu y\cdot e_1}I_{3,1}(y)\,d\sigma(y).
$$

In the polar coordinates
$y=(\cos t,\sin t\cos\ph,\sin t\sin\ph)$, this becomes
$$
\begin{aligned}
\cI(u)&=\frac1{4\pi}\int_0^\pi\int_0^{2\pi}e^{iu\cos t}\begin{pmatrix} 0&-\sin t\sin\ph&\sin t\cos\ph \\ \sin t\sin\ph &0&-\cos t \\-\sin t\cos\ph &\cos t&0\end{pmatrix}\sin t\,d\varphi\,dt\\
&=\half \Big(\int_0^\pi e^{iu\cos t}\cos t\sin t\,dt\Big)I_{3,1}(e_1)\\
&=\sqrt\frac\pi2\frac{J_\frac32(u)}{u^\half}\,iI_{3,1}(e_1).
\end{aligned}
$$

Proceeding as for the case $n>3$, we obtain
$$
\begin{aligned}
\Phi_{s,1}(x)&=3\sqrt\frac\pi2\bigg(\frac{J_\half(s |x|)}{s^\half |x|^{\frac 52}}x\trans x+\frac{J_{\frac 32}(s |x|)}{s^{\frac 32}|x|^{\frac 72}}\big(|x|^2I_3-3x\trans x\big)  \bigg)\\
\Phi_{s,2}(x)&=\frac{3}{2}\sqrt\frac\pi2\bigg(\frac{J_\half(s |x|)}{s^{\half}|x|^{\frac 52}}(3|x|^2I_3-x\trans x)-\frac{J_{\frac 32}(s |x|)}{s^{\frac 32}|x|^{\frac 72}}\big(|x|^2I_3-3x\trans x+s|x|^2 I_{3,1}(x)\big)  \bigg)\\
\Phi_{s,2}(x)&=\frac{3}{2}\sqrt\frac\pi2\bigg(\frac{J_\half(s |x|)}{s^{\half}|x|^{\frac 52}}(3|x|^2I_3-x\trans x)-\frac{J_{\frac 32}(s |x|)}{s^{\frac 32}|x|^{\frac 72}}\big(|x|^2I_3-3x\trans x-s|x|^2 I_{3,1}(x)\big)  \bigg).
\end{aligned}
$$

Taking $\cD=(\Delta I, \textup{\rm curl} )$, we have
$$
\rho_\cD(\Phi_{s,1})=(-s^2,0)\ ,\qquad \rho_\cD(\Phi_{s,2})=\big(-s^2,-s\big)\ ,\qquad \rho_\cD(\Phi_{s,3})=\big(-s^2,s\big).
$$

\end{document}